\newcommand{\logimply}{\Rightarrow} 
\newcommand{\clog}{\mathrm{C}} 
\newcommand{\nat}{\mathbb{N}} 
\newcommand{\placeholder}{\cdot} 
\newcommand{\Hom}{\mathrm{Hom}} 
\newcommand{\lefthomvec}{\hom} 
\newcommand{\righthomvec}{\hom} 
\newcommand{\class}[1]{\mathscr{#1}} 
\newcommand{\classofgraphs}{\class{G}} 
\newcommand{\classoftrees}[1][]{\ifthenelse{\equal{#1}{}}{\class{T}}{\class{T}_{#1}}} 
\newcommand{\classofindeps}{\class{I}} 
\newcommand{\classofcycles}{\class{C}} 
\newcommand{\classofpaths}[1][]{\ifthenelse{\equal{#1}{}}{\class{P}}{\class{P}_{#1}}} 
\newcommand{\classofcliques}{\class{K}} 
\newcommand{\mathtext}[1]{\mbox{\rm#1}} 
\newcommand{\mathmode}[1]{\begin{math}#1\end{math}} 
\newcommand{\ds}[1]{{\displaystyle #1}}
\newcommand{\defas}{\colonequals} 
\newcommand{\etc}{\ldots} 
\newcommand{\seq}[2]{{(#1 \mid #2)}} 
\newcommand{\inv}[1]{#1^{-1}} 
\newcommand{\vertex}{\mathit{V}} 
\newcommand{\edge}{\mathit{E}} 
\newcommand{\dunion}{\oplus} 
\newcommand{\pathgraph}[1][]{\ifthenelse{\equal{#1}{}}{\mathit{P}}{\mathit{P}_{#1}}}
\newcommand{\cycle}[1][]{\ifthenelse{\equal{#1}{}}{\mathit{C}}{\mathit{C}_{#1}}}
\newcommand{\clique}[1][]{\ifthenelse{\equal{#1}{}}{\mathit{K}}{\mathit{K}_{#1}}}
\newcommand{\indep}[1][]{\ifthenelse{\equal{#1}{}}{\mathit{I}}{\mathit{I}_{#1}}} 
\newcommand{\chrom}{\chi} 
\newcommand{\sete}[1]{{\{#1\}}} 
\newcommand{\setm}[2]{{\{#1 \mid #2\}}} 
\newcommand{\sett}[2]{{\{#1 \mid \mathtext{#2}\}}} 
\newcommand{\intsc}{\mathop{\cap}} 
\newcommand{\union}{\mathop{\cup}} 
\newcommand{\bdunion}{\oplus} 
\newcommand{\card}[1]{{|#1|}} 
\newcommand{\folog}{\mathrm{FO}} 
\newcommand{\satis}{\models} 
\newcommand{\isom}{\cong} 
\newcommand{\Sur}{\mathrm{Sur}} 
\newcommand{\Inj}{\mathrm{Inj}} 
\newcommand{\Ext}{\mathrm{Ext}} 
\newcommand{\inj}{\mathrm{inj}} 
\newcommand{\sur}{\mathrm{sur}} 
\newcommand{\aut}{\mathrm{aut}} 
\newcommand{\Clogic}{\mathrm{C}} 
\newcommand{\ignore}[1]{}
\theoremstyle{plain}
\newtheorem{lemma}{Lemma}
\newtheorem{theorem}{Theorem}
\theoremstyle{definition}
\newtheorem{definition}{Definition}
\newtheorem{example}{Example}
\newtheorem{observation}{Observation}
\newtheorem{proposition}{Proposition}
\tikzset{
-,  
node distance=50pt, 
every state/.style={thick, fill=gray!10, scale=1}, 
initial text=$ $, 
}
\begin{document}
\title{On the Expressive Power of Homomorphism Counts}
\author{
Albert Atserias\thanks{Universitat Polit\`ecnica de Catalunya,
   Barcelona, Catalonia, Spain. Atserias' research partially
   supported by MICIN project PID2019-109137GB-C22 (PROOFS).}
\and
   Phokion G. Kolaitis\thanks{UC Santa Cruz and IBM Research, Santa
     Cruz, CA, USA. Kolaitis' research partially supported by NSF Grant
     IIS-1814152.}
\and
Wei-Lin Wu\thanks{UC Santa Cruz, Santa Cruz, CA, USA.}
}
\date{}
\maketitle

\begin{abstract}
  A classical result by Lov\'asz asserts that two graphs~$G$ and~$H$
  are isomorphic if and only if they have the same left profile, that is, for every graph~$F$, the number of homomorphisms
  from~$F$ to~$G$ coincides with the number of homomorphisms from~$F$
  to~$H$. Dvor{\'{a}}k and later on  Dell, Grohe, and Rattan showed that restrictions of the
  left profile to a class of graphs can capture several
  different relaxations of isomorphism, including equivalence in counting logics with a fixed number of variables (which contains fractional isomorphism as a special case) and co-spectrality (i.e., two graphs having the same characteristic polynomial).
   On the other side, a result by Chaudhuri
  and Vardi asserts that isomorphism is also captured by the
  right profile, that is, two graphs~$G$ and~$H$ are
  isomorphic if and only if for every graph~$F$, the number of
  homomorphisms from~$G$ to~$F$ coincides with the number of
  homomorphisms from~$H$ to~$F$. In this paper, we embark on a study
  of the restrictions of the right profile by
  investigating relaxations of isomorphism that can or cannot be
  captured by restricting the right profile to a fixed
  class of graphs. Our results unveil striking differences between the
  expressive power of the left profile and the
  right profile. We show that fractional isomorphism,
   equivalence  in counting logics with a fixed number of variables, and co-spectrality
  cannot be captured by restricting the right profile
  to a class of graphs.  In the opposite direction, we show
  that chromatic equivalence cannot be captured by restricting the
  left profile to a class of graphs, while, clearly, it
  can be captured by restricting the right profile to the
  class of all cliques.
\end{abstract}

\section{Introduction} \label{sec:intro}
Even though research on the graph isomorphism problem has spanned
several decades \cite{read1977graph} and in spite of significant recent
progress \cite{DBLP:conf/stoc/Babai16}, the exact complexity of the
graph isomorphism problem remains unknown. This state of affairs has
motivated the study of \emph{relaxations} of graph isomorphism, that
is, equivalence relations that are coarser than graph isomorphism.
Some well known such relaxations are based on indistinguishability of
two graphs via a heuristic for graph isomorphism, others are based on
indistinguishability of two graphs in some logical formalism, and
others are based on indistinguishability of two graphs via some graph
polynomial.

The~$k$-dimensional Weisfeiler-Leman method,~$k\geq 1$, is a
prominent example of a heuristic for graph isomorphism that
distinguishes some, but not all, non-isomorphic graphs. This method is
an iterative algorithm that assigns colors to~$k$-tuples of vertices
of a graph until a stable coloring is achieved
\cite{weisfeiler2006construction}. The~$1$-dimensional
Weisfeiler-Leman method is also known as the vertex refinement
algorithm and coincides with the fractional isomorphism test, which
asks for the existence of a rational solution to a 0-1 integer linear
program encoding the existence of an isomorphism between two
graphs. On the side of logic, the counting
logics~$\clog^k$,~$k \geq 1$, have found applications to the study of
graph isomorphism; each logic~$\clog^k$ augments the syntax of
first-order logic with counting quantifiers~$\exists i x$ asserting
that there are at least~$i$ distinct elements~$x$, but
the~$\clog^k$-formulas are required to have at most~$k$ distinct
variables. Cai, F\"urer, and Immerman
\cite{DBLP:journals/combinatorica/CaiFI92} have shown a tight
connection between the Weisfeiler-Leman method and the counting
logics, namely, for every~$k\geq 2$, two graphs are indistinguishable
by the~$(k-1)$-Weisfeiler-Leman method if and only if they satisfy
the same~$\clog^k$-sentences.

Graph polynomials are polynomials that
encapsulate one or more important invariants of the graph they are
associated with. Different graph polynomials give rise to different
relaxations of graph isomorphism. For example, the chromatic
polynomial~$\chrom(G,k)$ of a graph~$G$ returns the number of
all~$k$-colorings of~$G$ (see \cite{read1968introduction}); it also
gives rise to the \emph{chromatic equivalence} relation, which holds
between two graphs~$G$ and~$H$ precisely
when~$\chrom(G,k) = \chrom(H,k)$ holds for all $k \geq 1$.

Lov\'asz \cite{lovasz1967operations} showed that graph isomorphism can
be characterized in terms of left-homomorphism counts. If~$G$ and~$H$
are two graphs, then~$\hom(G,H)$ denotes the number of homomorphisms
from~$G$ to~$H$. Let~$\class{G}$ be the class of all graphs (graphs
are assumed to be finite, undirected and to have no loops and no multiples edges)
and let~$G$ be a graph. The \emph{left profile} of~$G$ is
the infinite
vector~$\lefthomvec(\class{G}, G) =\seq{\hom(F, G)}{F \in
  \class{G}}$. Lov\'asz's result asserts that two graphs~$G$ and~$H$
are isomorphic if and
only if~$\lefthomvec(\class{G}, G) = \lefthomvec(\class{G}, H)$.
Dvor{\'{a}}k \cite{DBLP:journals/jgt/Dvorak10} and subsequently
Dell,
Grohe, and Rattan \cite{DBLP:conf/icalp/DellGR18} (unaware of Dvor{\'{a}}k's earlier work) investigated
relaxations of graph isomorphism obtained by restricting the
left profile to a fixed class~$\class{F}$ of graphs, i.e.,
they considered vectors of the
form~$\lefthomvec(\class{F}, G) =\seq{\hom(F, G)}{F \in \class{F}}$
and the associated equivalence relation between graphs~$G$ and~$H$
defined by the
condition~$\lefthomvec(\class{F},G)= \lefthomvec(\class{F},H)$.
Through these investigations, it has been shown that the left profile restricted to the
class~$\class{T}$ of all trees captures fractional isomorphism, i.e.,
two graphs~$G$ and~$H$ are fractionally isomorphic if and only
if~$\lefthomvec(\class{T}, G) = \lefthomvec(\class{T}, H)$. It has also been shown
 that the left profile restricted to the
class~$\class{T}_{k-1}$ of all graphs of treewidth at most~$k$
captures indistinguishability in the counting logic~$\clog^k$ and,
hence, indistinguishability using the~$(k-1)$-dimensional
Weisfeiler-Leman method,~$k\geq 3$.  The study of restrictions of the
left profile was further pursued by B{\"{o}}ker et al.\
\cite{DBLP:conf/mfcs/BokerCGR19} and by Grohe et al.\
\cite{DBLP:conf/lics/Grohe20}.

On the other side and in the context of database theory, Chaudhuri and Vardi \cite{DBLP:conf/pods/ChaudhuriV93} showed that graph isomorphism can also be characterized in terms of right-homomorphism counts. The \emph{right profile} of a graph~$G$ is the infinite vector $\righthomvec(G,\class{G}) =\seq{\hom(G, F)}{F \in \class{G}}$. Chaudhuri and Vardi showed that two graphs~$G$ and~$H$ are isomorphic if and only if $\lefthomvec(G,\class{G}) = \righthomvec(H,\class{G})$. This result was rediscovered by Fisk \cite{DBLP:journals/dmgt/Fisk95}.

In this paper, we embark on a study of relaxations of graph
isomorphism arising by restricting the right profile to a
fixed class~$\class{F}$ of graphs
(vectors of the form~$\righthomvec(G, \class{F}) =\seq{\hom(G, F)}{F \in \class{F}}$),
i.e., we study equivalence
relations on two graphs~$G$ and~$H$ defined by the
condition~$\righthomvec(G,\class{F})= \righthomvec(H,\class{F})$.  Our
main aim is to compare the expressive power of restrictions of the
right profile vs. the expressive power of restrictions of
the left profile.  While Dvor{\'{a}}k \cite{DBLP:journals/jgt/Dvorak10} and Dell, Grohe, and Rattan
\cite{DBLP:conf/icalp/DellGR18} identified relaxations of graph
isomorphism that \emph{can} be captured by restrictions of the
left profile, here we identify relaxations of graph
isomorphism that \emph{cannot} be captured by \emph{any} restriction
of the right profile or by \emph{any} restriction of the
left profile.  It should be noted that Garijo, Goodall,
and Ne\v{s}et\v{r}il \cite{DBLP:journals/ejc/GarijoGN11} considered
restrictions of both the left profile and the
right profile, but used these restrictions to study a
different problem, namely, which graphs are uniquely determined by
certain graph polynomials, such as the aforementioned chromatic
polynomial.

We now present an overview of our technical results. First, we show
from first principles that there is no class~$\class{F}$ of graphs
such that the right profile restricted to~$\class{F}$
captures fractional isomorphism. After this, we use sophisticated
machinery to show that for every~$k\geq 3$, there is no
class~$\class{F}$ of graphs such that the right profile
restricted to~$\class{F}$ captures indistinguishability of graphs in
the counting logic~$\clog^k$ (which, as discussed earlier, is the same
as indistinguishability of graphs using the~$(k-1)$-dimensional
Weisfeiler-Leman method). As a matter of fact, we show a much
stronger inexpressibility result to the effect that if~$\equiv$ is an
equivalence relation on graphs that is finer
than~$\clog^1$-equivalence but coarser than~$\clog^k$-equivalence for
some~$k\geq 2$, then there is no class~$\class{F}$ of graphs such that
the right profile restricted to~$\class{F}$ captures the
equivalence relation~$\equiv$. The proof of this result uses a
combination of tools from constraint satisfaction and finite model
theory. The key technical tool is a definable version of
the~$H$-coloring dichotomy theorem of Hell and Ne\v{s}et\v{r}il
\cite{HellNesetril1990}, which asserts that, for every graph~$H$,
if~$H$ is not~$2$-colorable, then the~$H$-coloring problem is
NP-complete, while if~$H$ is~$2$-colorable, then the~$H$-coloring
problem is solvable in polynomial time.  The definable version of this dichotomy theorem
asserts that if~$H$ is not~$2$-colorable, then the~$H$-coloring
problem is not definable in the infinitary counting
logic~$\clog^{\omega}_{\infty\omega}$, while if~$H$ is~$2$-colorable,
then the~$H$-coloring problem is definable by the negation of a
Datalog sentence.

As a byproduct of our main inexpressibility result, we establish that
there is no class~$\class{F}$ of graphs such that the
right profile restricted to~$\class{F}$ captures
co-spectrality of graphs; by definition, two graphs are
\emph{co-spectral} if they have the same characteristic polynomial or,
equivalently, if their adjacency matrices have the same multiset of
eigenvalues. Note that, as pointed out by Dell, Grohe, and Rattan
\cite{DBLP:conf/icalp/DellGR18}, co-spectrality of graphs is captured
by the left profile restricted to the class~$\class{C}$ of
all cycles.

The results discussed so far concern limitations of the expressive
power of restrictions of the right profile. Switching to
the other side, we show that there is no class~$\class{F}$ of graphs
such that the left profile restricted to~$\class{F}$
captures chromatic equivalence. In contrast, chromatic equivalence is
captured by the right profile restricted to the
class~$\class{K}$ of all cliques. Note that the study of chromatically
equivalent graphs has a long history that starts with the work of
Birkhoff \cite{birkhoff1912determinant} in the early 20th
Century. Furthermore, chromatically equivalent graphs have several
invariants in common, including the same number of vertices, edges, triangles,
components, and girth (see \cite{DBLP:journals/tcs/Noy03}); it is worth pointing out that the first three of these invariants are left-homomorphism counts.

In this paper, we also obtain a common generalization of the
characterizations of graph isomorphism by Lov\'asz
\cite{lovasz1967operations} and by Chaudhuri and Vardi
\cite{DBLP:conf/pods/ChaudhuriV93}. Finally, at the conceptual level,
we discuss extensions of the framework studied here to graphs that may
have loops and weights on their vertices and edges, where the weights
are real numbers or, more generally, are elements of an arbitrary, but
fixed, semiring. This extension of the framework makes it possible to
capture several other relaxations of graph isomorphism arising from
fundamental graph polynomials, such as the cluster expansion polynomial
and the independence polynomial.

\section{Preliminaries and Basic Concepts} \label{sec:prelims}
Unless otherwise specified, all graphs are finite, undirected, and
simple, i.e., without self-loops or multi-edges. For a graph~$G$, we
write~$\vertex(G)$ for the set of \emph{vertices} (or \emph{nodes})
of~$G$ and~$\edge(G)$ for the set of \emph{edges} of~$G$. The vertices
are \emph{labelled}, i.e., they are elements of some fixed countable
set of labels, say~$\mathbb{N}$. If~$u$ and~$v$ are distinct vertices,
we write~$(u,v)$ to denote the edge with endpoints~$u$ and~$v$. Since
edges are undirected, the edges~$(u,v)$ and~$(v,u)$ are the same.

\begin{definition}
Let~$G$ and~$H$ be two graphs.
A function~$g: \vertex(G) \to \vertex(H)$ is an \emph{isomorphism} from~$G$ to~$H$
if~$g$ is a bijection, and for every two vertices~$u$ and $v$ in
$\vertex(G)$, we have that~$(u, v) \in \edge(G)$ if and only
if~$(g(u), g(v)) \in \edge(H)$.
We say that \emph{$G$ is isomorphic to~$H$} and we write~$G \isom H$ if there exists an isomorphism~$g$ from~$G$ to~$H$.
  A function~$h: \vertex(G) \to \vertex(H)$ is a \emph{homomorphism} from~$G$ to~$H$
  if for every two vertices~$u$ and~$v$
  in~$\vertex(G)$, we have that~$(u, v) \in \edge(G)$ implies~$(h(u),
  h(v)) \in \edge(H)$.
 We write~$h: G\rightarrow H$ to denote that~$h$ is a homomorphism
 from~$G$ to~$H$.
\end{definition}

In the sequel, whenever we say that~$\class{F}$ is a \emph{class} of
graphs, we mean that~$\class{F}$ is a non-empty collection of graphs
that is closed under isomorphisms, i.e., if~$G\in \class{F}$ and~$G$
is isomorphic to~$H$, then~$H \in \class{F}$.

We now introduce notation for various classes of graphs and for
particular graphs that will be used in the sequel. For~$n \geq 1$, we
write~$\cycle[n]$,~$\pathgraph[n]$,~$\clique[n]$ and~$\indep[n]$ to
denote the cycle with~$n$ vertices (and length~$n$), the path with~$n$
vertices (and length~$n-1$), the clique with~$n$ vertices, and the
independent set with~$n$ vertices, respectively. The
cycles~$\cycle[1]$ and~$\cycle[2]$ are called \emph{degenerate}.
Clearly,~$\indep[1] = \clique[1] = \pathgraph[1] = \cycle[1]$ is the
single-vertex graph and~$\clique[2] = \pathgraph[2] = \cycle[2]$ is
the single-edge graph (on two vertices).  We
write~$\classofgraphs$,~$\classoftrees$,~$\classofcycles$,~$\classofpaths$,~$\classofcliques$,
and~$\classofindeps$ for the classes of all graphs, all (unrooted)
trees, all cycles, all paths, all cliques, and all independent sets,
respectively.

\ignore{
We say that a graph $G$ is \emph{connected} if for every two distinct vertices $u, v \in \vertex(G)$, there is a path in $G$ from $u$ to $v$. For technical convenience, we regard the single-vertex graph $\indep[1]$ as connected.

The lemma below will be useful in later discussions.

\begin{lemma}
\label{conn_prsv_homo}
Let $G, H$ be two graphs and $h: G \to H$ be a homomorphism. If $G$ is connected, then so is the homomorphic image $h(G)$ of $G$ under $h$.
\end{lemma}
\begin{proof}
Let $G, H$ and $h$ be given as in the premise, and assume that $G$ is connected. Our goal is to show that the graph $h(G)$ is also connected. If $G = \indep[1]$ then $h(G)$ is isomorphic to $\indep[1]$ and hence is connected. Otherwise, $G$ contains at least an edge and so does $h(G)$, hence $h(G)$ contains at least two vertices. Given every two distinct vertices $u', v' \in \vertex(h(G))$, we choose arbitrary preimages $u, v \in \vertex(G)$ of $u', v'$ (so that $h(u) = u'$ and $h(v) = v'$), respectively. Then there is a path $p$ in $G$ from $u$ to $v$ since $G$ is connected. We take the homomorphic image of $p$ or more precisely, the sequence of vertices (in $\vertex(h(G))$) obtained by replacing in $p$ each occurrence of a vertex $w$ by its homomorphic image $h(w)$, then the resulting sequence is a (not necessarily simple) path $p'$ in $h(G)$ from $u'$ to $v'$.
\end{proof}

The \emph{disjoint union} of two graphs $G$ and $H$ is denoted $G \dunion H$. Note that for all graphs $F$, $G$ and $H$, we have $\hom(G \dunion H, F) = \hom(G, F) \times \hom(H, F)$.

}

The central notion in this paper is that of \emph{homomorphism count},
which we now introduce.

\begin{definition}
  Let~$G$ and~$H$ be two graphs.  We write~$\Hom(G, H)$ to denote the
  set of all homomorphisms from~$G$ to~$H$. In symbols,
$$\Hom(G, H) \defas \setm{h}{h: G \to H}.$$
We write~$\hom(G, H)$ to denote the number of homomorphisms from~$G$
to~$H$. In symbols,
$$\hom(G, H) \defas \card{\Hom(G, H)}.$$
\end{definition}

The homomorphism count can be viewed as a
function~$\hom: \classofgraphs \times \classofgraphs \to \nat$ from
pairs of graphs to natural numbers.  If we pick only one graph from
each isomorphism type, then the homomorphism count can be visualized
as the following infinite~$2$-dimensional matrix:
%
\begin{displaymath}
\begin{array}{c|cccc}
\hom(\placeholder, \placeholder) & G_1  & \cdots & G_j &\cdots \cr\hline
G_1 & \hom(G_1, G_1) & \cdots & \hom(G_1, G_j) & \cdots \cr
\vdots & \vdots & \vdots & \vdots & \ddots \cr
G_i & \hom(G_i, G_1) & \cdots & \hom(G_i, G_j) &  \cdots\cr
\vdots & \vdots & \vdots & \vdots &  \ddots\cr
\end{array}
\end{displaymath}
\ignore{
\begin{displaymath}
 \begin{array}{c|ccc}
 \hom(\placeholder, \placeholder) & G_1 & G_2 & \cdots  \cr\hline
 G_1 & \hom(G_1, G_1) & \hom(G_1, G_2) &  \cdots \cr
 G_2 & \hom(G_2, G_1) & \hom(G_2, G_2) & \cdots  \cr
 \vdots & \vdots & \vdots & \ddots \cr
 G_i & \hom(G_i, G_1) & \hom(G_i, G_2) & \cdots \cr
 \vdots & \vdots & \vdots &   \ddots\cr
 \end{array}
 \end{displaymath}
 }
\noindent For a graph~$G$, the column of this matrix indexed by~$G$ is referred
to as the \emph{left profile} of~$G$, and the row of this matrix
indexed by~$G$ is referred to as the \emph{right profile}
of~$G$. If~$\class{F}$ is a class of graphs, the
\emph{left profile of~$G$ restricted to~$\class{F}$} and
the \emph{right profile of~$G$ restricted to~$\class{F}$}
are defined as:
\begin{align*}
\lefthomvec(\class{F}, G) & \defas \seq{\hom(F, G)}{F \in \class{F}}, \\
\righthomvec(G, \class{F}) & \defas \seq{\hom(G, F)}{F \in \class{F}}.
\end{align*}
When~$\class{F}$ is the class~$\classofgraphs$ of all graphs, the
reference to the restriction to the class~$\class{F}$ is omitted from
the terminology.

We now state two well-known theorems by Lov\'asz
\cite{lovasz1967operations} and by Chaudhuri and Vardi
\cite{DBLP:conf/pods/ChaudhuriV93}, which characterize graph
isomorphism in terms of left profiles and
right profiles, respectively.

\begin{theorem}\label{lovasz_theorem} \cite{lovasz1967operations}
  For every two graphs~$G$ and~$H$, we have that~$G \isom H$ if and
  only
  if~$\lefthomvec(\classofgraphs, G) = \lefthomvec(\classofgraphs,
  H)$.
\end{theorem}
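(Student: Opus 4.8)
The plan is to prove the non-trivial ``if'' direction by passing through counts of \emph{injective} homomorphisms, using the classical triangularity argument. The ``only if'' direction is immediate: any isomorphism $g\colon G\to H$ induces, for every graph $F$, a bijection $\Hom(F,G)\to\Hom(F,H)$ via $h\mapsto g\circ h$, so $\lefthomvec(\classofgraphs,G)=\lefthomvec(\classofgraphs,H)$.

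For the converse, let $\inj(F,G)$ denote the number of injective homomorphisms from $F$ to $G$. The key combinatorial identity I would establish is
$$\hom(F,G)=\sum_{\phi}\inj(F/\phi,G)$$
for all graphs $F$ and $G$, where $\phi$ ranges over the partitions of $\vertex(F)$ no block of which contains two adjacent vertices, and $F/\phi$ is the quotient graph whose vertices are the blocks of $\phi$, with an edge between two blocks whenever some edge of $F$ joins them. This follows from the observation that every homomorphism $h\colon F\to G$ factors uniquely as the canonical surjection $F\to F/\phi$ — where $\phi$ is the partition of $\vertex(F)$ into the fibres of $h$, which is of the admissible kind precisely because $G$ has no loops — followed by an injective homomorphism $F/\phi\to G$, and that conversely every pair consisting of an admissible $\phi$ and an injective homomorphism $F/\phi\to G$ arises in this way. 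The essential feature of this identity is that it is triangular with respect to the number of vertices: the discrete partition contributes exactly $\inj(F,G)$, while every other admissible $\phi$ yields a quotient $F/\phi$ with strictly fewer vertices than $F$.

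Now assume $\hom(F,G)=\hom(F,H)$ for every graph $F$. Arguing by strong induction on $\card{\vertex(F)}$, and using the identity to subtract from $\hom(F,G)=\hom(F,H)$ the strictly lower-order terms — which agree by the induction hypothesis — I would deduce that $\inj(F,G)=\inj(F,H)$ for every graph $F$. To finish, I specialize: since the identity map is an injective homomorphism, $\inj(G,G)\geq 1$ and $\inj(H,H)\geq 1$, hence $\inj(G,H)=\inj(G,G)\geq 1$ and $\inj(H,G)=\inj(H,H)\geq 1$, so there are injective homomorphisms in both directions and therefore $\card{\vertex(G)}=\card{\vertex(H)}$. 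Fixing an injective homomorphism $f\colon G\to H$, it is then a bijection of vertex sets, and $(u,v)\in\edge(G)$ implies $(f(u),f(v))\in\edge(H)$, so $\card{\edge(G)}\leq\card{\edge(H)}$; by the symmetric argument $\card{\edge(G)}=\card{\edge(H)}$, whence $f$ maps $\edge(G)$ onto $\edge(H)$ and is therefore an isomorphism, i.e., $G\isom H$.

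I do not anticipate a serious obstacle. The one point that demands care is stating and verifying the combinatorial identity precisely: one must check that the factorization of a homomorphism through the quotient by its fibre partition is a genuine bijection (not merely a surjection or an injection) between homomorphisms $F\to G$ and admissible pairs, and that exactly the partitions with no monochromatic edge are admissible — the latter being automatic here, since all graphs are simple and loopless. Everything else is routine bookkeeping.
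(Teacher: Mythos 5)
Your proof is correct and follows essentially the same route as the paper's argument (given there for the more general Theorem~\ref{theorem_general_lovasz_chaudhuri_vardi}, which subsumes this statement): factor each homomorphism into a surjective part followed by an injective part, observe that the resulting identity relating $\hom$ to $\inj$ is triangular, induct to conclude $\inj(F,G)=\inj(F,H)$ for all $F$, and finish from $\inj(G,H)=\inj(G,G)>0$ and $\inj(H,G)=\inj(H,H)>0$. The only cosmetic difference is that you index the decomposition by admissible partitions of $\vertex(F)$, whereas the paper sums over isomorphism types $E$ with weight $\sur(F,E)\cdot\inj(E,G)/\aut(E)$.
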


\begin{theorem}\label{chaudhuri_vardi_theorem} \cite{DBLP:conf/pods/ChaudhuriV93}
  For every two graphs~$G$ and~$H$, we have that~$G \isom H$ if and
  only
  if~$\righthomvec(G, \classofgraphs) = \righthomvec(H,
  \classofgraphs)$.
\end{theorem}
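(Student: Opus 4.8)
The forward direction is routine: an isomorphism $g\colon G\to H$ induces, for every graph $F$, a bijection $\Hom(G,F)\to\Hom(H,F)$ by $h\mapsto h\circ \inv{g}$, whence $\hom(G,F)=\hom(H,F)$. So the content is the converse, and the plan is to run a ``triangular system'' argument dual to the one behind Lov\'asz's Theorem~\ref{lovasz_theorem}: instead of classifying the homomorphisms \emph{into} $G$ by the kernel partition of the source, one classifies the homomorphisms \emph{out of} $G$ by their image.

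For a homomorphism $h\colon G\to F$, define its image to be the subgraph $F_h\defas\left(h(\vertex(G)),\ \setm{(h(u),h(v))}{(u,v)\in\edge(G)}\right)$ of $F$, which is well defined because $h$ is a homomorphism and $F$ is loopless. The corestriction of $h$ onto $F_h$ is surjective both on vertices and on edges. Writing $\Sur(G,F')$ for the set, and $\sur(G,F')$ for the number, of homomorphisms $G\to F'$ that are surjective on vertices and on edges, the assignment $h\mapsto(\text{corestriction of }h\text{ onto }F_h)$ is a bijection $\Hom(G,F)\cong\bigsqcup_{F'\subseteq F}\Sur(G,F')$; grouping the subgraphs $F'\subseteq F$ by isomorphism type, this yields the identity
\[
  \hom(G,F)=\sum_{[F']}\mathrm{sub}(F',F)\cdot\sur(G,F'),
\]
where $[F']$ ranges over isomorphism types of graphs and $\mathrm{sub}(F',F)$ denotes the number of subgraphs of $F$ isomorphic to $F'$.

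Next I would read this as a triangular linear system in the unknowns $\sur(G,\cdot)$ and invert it. Put $F'\trianglelefteq F$ iff $F'$ is isomorphic to a subgraph of $F$; this is a partial order in which every type has only finitely many predecessors, with $\mathrm{sub}(F',F)=0$ unless $F'\trianglelefteq F$ and $\mathrm{sub}(F,F)=1$, since a subgraph of $F$ having as many vertices and as many edges as $F$ must coincide with $F$. Hence, by induction along $\trianglelefteq$, the whole family $\seq{\sur(G,F')}{F'\in\classofgraphs}$ is determined by the family $\seq{\hom(G,F)}{F\in\classofgraphs}$, and likewise for $H$. In particular, $\righthomvec(G,\classofgraphs)=\righthomvec(H,\classofgraphs)$ implies $\sur(G,F')=\sur(H,F')$ for every graph $F'$.

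To conclude, I would evaluate this equality at $F'=G$: since the identity map witnesses $\sur(G,G)\ge 1$, we get $\sur(H,G)\ge 1$, so there is a vertex- and edge-surjective homomorphism $H\to G$; in particular $\card{\vertex(H)}\ge\card{\vertex(G)}$, and symmetrically $\card{\vertex(G)}\ge\card{\vertex(H)}$. Now a vertex-surjective homomorphism $h\colon H\to G$ between graphs with the same finite number of vertices is a vertex-bijection; edge-surjectivity then forces $\card{\edge(H)}=\card{\edge(G)}$ and makes the induced edge map a bijection, and a short check (pull back a putative edge using vertex-injectivity) shows that $h$ also reflects non-edges, so $h$ is an isomorphism and $G\isom H$. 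The step I expect to demand the most care is the image-decomposition identity together with the verification that the resulting system is genuinely uni-triangular --- above all that the ``image'' of a homomorphism is taken with its own vertex \emph{and} edge sets (not as an induced subgraph), so that the correspondence with surjective homomorphisms is an exact bijection and $\mathrm{sub}(F,F)=1$; everything downstream of the inversion is elementary finite combinatorics, and the null graph, if admitted at all, is best handled as a separate trivial case.
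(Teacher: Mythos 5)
Your proof is correct and takes essentially the same route as the paper's: the paper establishes this statement as the direction $(3)\logimply(1)$ of Theorem~\ref{theorem_general_lovasz_chaudhuri_vardi} specialized to $\class{F}=\classofgraphs$, using the identity $\hom(G,D)=\sum_{E}\sur(G,E)\cdot\inj(E,D)/\aut(E)$ and the same triangular inversion along an order by vertex and edge counts; your coefficient $\mathrm{sub}(F',F)$ is exactly $\inj(F',F)/\aut(F')$. The only (minor) difference is that you spell out why mutual vertex- and edge-surjective homomorphisms between finite graphs force an isomorphism, a step the paper leaves implicit.
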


Since we have picked only one graph from each isomorphism type in
constructing the matrix, Theorem~\ref{lovasz_theorem} asserts that
\emph{no} two columns of the matrix are the same, while
Theorem~\ref{chaudhuri_vardi_theorem} asserts that \emph{no} two rows
of the matrix are the same. The `only if' direction of both these
theorems is trivial.

\section{Generalizing Lov\'asz and Chaudhuri-Vardi} \label{section_gen_lovasz_chaudhuri_vardi_theorems}
The preceding theorems by Lov\'asz and by Chaudhuri and Vardi
characterize graph isomorphism in terms of the left profile
and the right profile on the class~$\classofgraphs$
of all graphs. In this section, we generalize these result to
classes~$\class{F}$ of graphs that satisfy certain conditions.

\begin{definition} 
  Let~$G$ and~$H$ be two graphs.

  A homomorphism~$h: G \to H$ is
  \emph{injective} if the mapping~$h: \vertex(G) \to \vertex(H)$ is
  injective.  We write~$\inj(G, H)$ to denote the number of injective
  homomorphisms from~$G$ to~$H$.

  A homomorphism~$h: G \to H$ is
  \emph{surjective} if the image~$h(G)$ of~$G$ under~$h$ coincides
  with~$H$, i.e.,~$\vertex(h(G)) = \vertex(H)$
  and~$\edge(h(G)) = \edge(H)$. We write~$\sur(G, H)$ to denote the
  number of surjective homomorphisms from~$G$ onto~$H$.

   An
  isomorphism from~$G$ to~$G$ is called an \emph{automorphism} of~$G$.
  We write~$\aut(G)$ for the number of automorphisms of~$G$.
\end{definition}

\ignore{
Note that for every two graphs $G$ and $H$, $G$ is isomorphic to a subgraph of $H$ if and only if there is an injective homomorphism from $G$ into $H$.}

\begin{definition} Let~$\class{F}$ be a class of graphs.  We
  write~$\Inj(\class{F})$ to denote the class of all graphs~$G$ such
  that there is an injective homomorphism~$h: G \to F$ to some
  graph~$F\in \class{F}$.  We write~$\Sur(\class{F})$ to denote the
  class of all graphs~$G$ such that there is a surjective
  homomorphism~$h: F \to G$ from some graph~$F\in \class{F}$ onto~$G$.
  The \emph{extension class of~$\class{F}$}, denoted
  by~$\Ext(\class{F})$, is the intersection of~$\Inj(\class{F})$
  and~$\Sur(\class{F})$. In symbols,
  \begin{displaymath}
\Ext(\class{F}) \defas \Sur(\class{F}) \intsc \Inj(\class{F}).
\end{displaymath}
\end{definition}

It is obvious that $\class{F} \subseteq \Inj(\class{F})$ and
$\class{F} \subseteq \Sur(\class{F})$. Therefore, for every class
$\class{F}$ of graphs, we have that
$\class{F} \subseteq \Ext(\class{F})$.

The next result is a  simultaneous generalization of Theorem~\ref{lovasz_theorem}
and Theorem~\ref{chaudhuri_vardi_theorem}.

\begin{theorem}\label{theorem_general_lovasz_chaudhuri_vardi}
  Let~$\class{F}$ be a non-empty class of graphs. For every two
  graphs~$G$ and~$H$ in~$\class{F}$, the following statements are
  equivalent:
\begin{enumerate}[(1)] \itemsep=0pt
\item $G$ and $H$ are isomorphic.
\item $\lefthomvec(\Ext(\class{F}), G) = \lefthomvec(\Ext(\class{F}), H)$.
\item $\righthomvec(G, \Ext(\class{F})) = \righthomvec(H, \Ext(\class{F}))$.
\end{enumerate}
In particular, if~$\Ext(\class{F}) = \class{F}$, then for every two
graphs~$G$ and~$H$ in~$\class{F}$, we have that $G$ and $H$ are isomorphic if and only if their
left profiles restricted to~$\class{F}$ are equal, and
if and only if their right profiles restricted
to~$\class{F}$ are equal.
\end{theorem}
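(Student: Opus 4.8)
The plan is to prove the cycle of implications $(1) \Rightarrow (2) \Rightarrow (1)$ and $(1) \Rightarrow (3) \Rightarrow (1)$, using Lov\'asz's theorem (Theorem~\ref{lovasz_theorem}) and the Chaudhuri--Vardi theorem (Theorem~\ref{chaudhuri_vardi_theorem}) as black boxes. The implication $(1) \Rightarrow (2)$ and $(1) \Rightarrow (3)$ are trivial, since isomorphic graphs have identical full left- and right-homomorphism vectors, hence also identical restrictions to $\Ext(\class{F})$. So the work is in the two converse directions, and the key point is that $\Ext(\class{F})$ is rich enough to ``see'' the graphs $G, H \in \class{F}$ and everything relevant to distinguishing them.

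For $(2) \Rightarrow (1)$: assume $\lefthomvec(\Ext(\class{F}), G) = \lefthomvec(\Ext(\class{F}), H)$; I want to upgrade this to $\lefthomvec(\classofgraphs, G) = \lefthomvec(\classofgraphs, H)$ and then invoke Theorem~\ref{lovasz_theorem}. The standard trick is to pass through injective homomorphism counts: recall $\hom(F, G) = \sum_{F'} \sur(F, F')\,\frac{\inj(F', G)}{\aut(F')}$ where the sum ranges over (isomorphism types of) homomorphic images $F'$ of $F$, and conversely $\inj$ is an invertible (triangular) linear transform of $\hom$. The crucial observation is: if $F$ is an arbitrary graph with $\inj(F, G) \neq 0$ (equivalently $\inj(F,H)\neq 0$, once we know the two graphs have the same homomorphism counts on a large enough class), then $F$ embeds into $G \in \class{F}$, so $F \in \Inj(\class{F})$; and every homomorphic image $F'$ of such an $F$ that still maps onto something inside $G$ lies in $\Sur(\class{F}) \cap \Inj(\class{F}) = \Ext(\class{F})$. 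Thus the values $\hom(F, G)$ for $F$ ranging over $\classofgraphs$ are determined by the values $\hom(F', G)$ for $F' \in \Ext(\class{F})$ together with the purely combinatorial coefficients $\sur(F, F')$ and $\aut(F')$, which do not depend on $G$ versus $H$. So $\lefthomvec(\Ext(\class{F}), G) = \lefthomvec(\Ext(\class{F}), H)$ forces $\hom(F, G) = \hom(F, H)$ for every graph $F$, and Lov\'asz gives $G \isom H$. One must be slightly careful bootstrapping the argument (we only know equality on $\Ext(\class{F})$ to begin with, not on all of $\Inj(\class{F})$), so I would induct on $\card{\vertex(F')}$ or on $\card{\edge(F')}$ of the homomorphic images, peeling off the largest image $F' = F$ itself when $F \in \Ext(\class{F})$, and handling smaller images inductively.

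For $(3) \Rightarrow (1)$: this is the dual argument. Assume $\righthomvec(G, \Ext(\class{F})) = \righthomvec(H, \Ext(\class{F}))$; I want $\hom(G, F) = \hom(H, F)$ for all graphs $F$ and then apply Theorem~\ref{chaudhuri_vardi_theorem}. The dual decomposition is $\hom(G, F) = \sum_{F'} \inj(F'\!, F) \cdot \frac{\sur(G, F')}{\aut(F')}$, where $F'$ ranges over homomorphic images of $G$; since $G \in \class{F}$, every such $F'$ lies in $\Sur(\class{F})$, and whenever $\inj(F', F) \neq 0$ we also have $F' \in \Inj(\class{F})$ — wait, that inclusion is the wrong direction, so instead I would argue that the surjective homomorphism counts $\sur(G, F')$ are exactly $\hom(G, F')$ restricted to images, and for $F' \in \Sur(\class{F}) \cap \Inj(\class{F}) = \Ext(\class{F})$ these are given as linear combinations of the known values $\righthomvec(G, \Ext(\class{F}))$. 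More cleanly: $\sur(G, F')$ is, by M\"obius inversion over the subgraph/partition lattice, a fixed integer combination of $\hom(G, F'')$ with $F''$ quotients of $F'$, all of which lie in $\Sur(\class{F})$; intersecting with the condition that they be relevant to some $F'$ with $\inj(F', F)\ne 0$ keeps us inside $\Inj(\class{F})$ as well. Hence $\sur(G, F') = \sur(H, F')$ for all $F' \in \Ext(\class{F})$, hence $\hom(G, F) = \hom(H, F)$ for all $F$, hence $G \isom H$ by Chaudhuri--Vardi. The ``In particular'' clause is immediate: if $\Ext(\class{F}) = \class{F}$ then restrictions to $\Ext(\class{F})$ are literally restrictions to $\class{F}$.

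The main obstacle I expect is bookkeeping the membership claims precisely — verifying that every graph $F'$ whose homomorphism count into/out of $G$ is needed in the inversion formulas genuinely lies in $\Ext(\class{F}) = \Sur(\class{F}) \cap \Inj(\class{F})$, and not merely in one of the two halves. Getting both the ``$F'$ is a homomorphic image of something in $\class{F}$'' (giving $\Sur$) and ``$F'$ injects into something in $\class{F}$'' (giving $\Inj$) at the same time is the delicate step, and it is where the definition of $\Ext$ as the intersection does exactly the right work. The surjective/injective decomposition identities themselves, and their triangular invertibility, are routine and I would cite or state them without grinding through the M\"obius computation.
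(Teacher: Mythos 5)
Your computational engine---the decomposition $\hom(F,G)=\sum_{E}\sur(F,E)\,\inj(E,G)/\aut(E)$ and its triangular invertibility---is exactly the right tool, and it is the one the paper uses. But your plan to \emph{recover the full vectors} $\lefthomvec(\classofgraphs,G)=\lefthomvec(\classofgraphs,H)$ and then invoke Lov\'asz (resp.\ Chaudhuri--Vardi) as a black box has a genuine gap. For an arbitrary graph $F$, the nonzero terms of the decomposition involve $\inj(E,G)$ for quotients $E$ of $F$ that embed into $G$; such an $E$ lies in $\Inj(\class{F})$, but your claim that it also lies in $\Sur(\class{F})$ is false. Concretely, take $\class{F}$ to be the isomorphism class of $\cycle[5]$ and $F=\pathgraph[3]\in\Inj(\class{F})$: its homomorphic image $\clique[2]$ embeds into $\cycle[5]$ but is not a surjective image of $\cycle[5]$ (that would be a $2$-coloring of an odd cycle), so $\clique[2]\notin\Ext(\class{F})$, and the term $\sur(\pathgraph[3],\clique[2])\,\inj(\clique[2],\cdot)/\aut(\clique[2])$ in the expansion of $\hom(\pathgraph[3],\cdot)$ is not controlled by the data indexed by $\Ext(\class{F})$. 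The same problem, compounded by a mixed-up lattice (recovering $\sur(G,F')$ from $\hom(G,F'')$ uses \emph{subgraphs} $F''$ of $F'$, not quotients), afflicts your sketch of $(3)\Rightarrow(1)$. In short, the triangular system closes over $\Ext(\class{F})$ only when the ``outer'' graph is itself in $\Ext(\class{F})$, and your proposed induction cannot reach the counts for graphs outside that class.

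The fix is to abandon the detour entirely: run the induction only for $D\in\Ext(\class{F})$, where both memberships do hold (if $\sur(D,E)>0$ then $E\in\Sur(\class{F})$ because $D\in\Sur(\class{F})$, and if $\inj(E,G)>0$ then $E\in\Inj(\class{F})$ because $G\in\class{F}$), concluding $\inj(D,G)=\inj(D,H)$ for all $D\in\Ext(\class{F})$. Since $G,H\in\class{F}\subseteq\Ext(\class{F})$, setting $D=G$ and $D=H$ yields $\inj(G,H)=\inj(G,G)>0$ and $\inj(H,G)=\inj(H,H)>0$, hence $G\isom H$ directly---no appeal to Theorem~\ref{lovasz_theorem} is needed. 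Dually, $(3)\Rightarrow(1)$ gives $\sur(G,H)>0$ and $\sur(H,G)>0$, again forcing $G\isom H$ without Theorem~\ref{chaudhuri_vardi_theorem}. This is precisely the paper's argument.
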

\begin{proof}
In the sequel, we assume that the class~$\classofgraphs$ of graphs is
linearly ordered first in increasing order of~$\card{\vertex(G)}$, then in
increasing order of~$\card{\edge(G)}$, and, finally, arbitrarily in case of a
tie; e.g., in lexicographical order of the rows of the adjacency
matrix. We write~$G < H$ to denote that~$G$ precedes~$H$ in this
ordering.
Moreover, we assume that in summations over a
class~$\class{F}$ of graphs, the summand~$F \in \class{F}$ runs over
the isomorphism types so that isomorphic graphs contribute only
once to the summation.

  The directions (1)~$\logimply$ (2) and (1)~$\logimply$ (3) are
  trivial. To prove (2)~$\logimply$ (1), let~$\class{F}$ be a
  non-empty class, and let~$G$ and~$H$ be two graphs in~$\class{F}$
  such
  that~$\lefthomvec(\Ext(\class{F}), G) = \lefthomvec(\Ext(\class{F}),
  H)$. Our goal is to show that~$G \isom H$, and we do so by arguing
  that~$\inj(G,H) > 0$ and~$\inj(H,G) > 0$.

  By induction on the position of~$D \in \Ext(\class{F})$ in the
  linear order~$<$ on~$\classofgraphs$, we show
  that~$\inj(D,G) = \inj(D,H)$. From this the goal will follow by
  setting~$D = H$, i.e.,~$\inj(H,G) = \inj(H,H) > 0$, and~$D = G$,
  i.e., $\inj(G,H) = \inj(G,G) > 0$. We start by noting that
  \begin{equation}
    \hom(D, G) = \sum_{E \in \classofgraphs} \sur(D, E) \cdot \inj(E, G) / \aut(E).
  \label{eqn:split}
  \end{equation}
  Next observe that, for~$E \in \classofgraphs$, if~$\sur(D, E) > 0$
  and~$\inj(E, G) > 0$, then~$E \in \Sur(\class{F})$
  since~$D \in \Ext(\class{F}) \subseteq \Sur(\class{F})$, and
  also~$E \in \Inj(\class{F})$ since~$G \in \class{F}$; that
  is,~$E \in \Ext(\class{F})$. This means that the sum
  in~\eqref{eqn:split} can be restricted to~$E \in
  \Ext(\class{F})$. Moreover, if~$E$ has more vertices or edges
  than~$D$, or if~$E$ and~$D$ have the same number of vertices and
  edges but are not isomorphic, then~$\sur(D,E) = 0$. Thus, the sum
  in~\eqref{eqn:split} can be restricted further to~$E < D$
  or~$E \cong D$. Since, by convention, sums over classes of graphs
  are restricted to isomorphism types, we get
  \begin{equation}
    \inj(D, G) + \sum_{\genfrac{}{}{0pt}{2}{E \in \Ext(\class{F}):}{E < D}} \sur(D, E) \cdot \inj(E, G) / \aut(E). \label{eqn:splitsplit}
    \end{equation}
    The same can be argued for~$H$ in place of~$G$. By induction
    hypothesis we have~$\inj(E,G) = \inj(E,H)$ for every~$E$ in the
    sum in~\eqref{eqn:splitsplit}, and by assumption we
    have~$\hom(D,G) = \hom(D,H)$. The conclusion is
    that~$\inj(D,G) = \inj(D,H)$, as was to be proved.

    The direction (3)~$\logimply$ (1) can be argued analogously, so we
    highlight only the key differences. This time we want to argue
    that~$\sur(G,H) > 0$ and~$\sur(H,G) > 0$, and we do so by arguing
    that~$\sur(G,D) = \sur(H,D)$ for every~$D \in
    \Ext(\class{F})$. The analogue of Equation~\eqref{eqn:split} is
  \begin{equation}
    \hom(G,D) = \sum_{E \in \classofgraphs} \sur(G, E) \cdot \inj(E, D) / \aut(E).
  \label{eqn:split2}
\end{equation}
Next, for~$E \in \classofgraphs$, if~$\sur(G,E) > 0$ and~$\inj(E,D) >
0$, then~$E \in \Sur(\class{F})$ since~$G \in \class{F}$, and also~$E
\in \Inj(\class{F})$ since~$D \in \Ext(\class{F}) \subseteq
\Inj(\class{F})$; that is,~$E \in \Ext(\class{F})$. Thus, the sum
in~\eqref{eqn:split2} can be restricted to~$\Ext(\class{F})$, and by
the same type of argument as before, further down to
  \begin{equation}
    \sur(G, D) + \sum_{\genfrac{}{}{0pt}{2}{E \in \Ext(\class{F}):}{E < D}} \sur(G, E) \cdot \inj(E, D) / \aut(E). \label{eqn:splitsplit2}
  \end{equation}
    The same can be argued for~$H$ in place of~$G$. By induction
    hypothesis we have~$\sur(G,E) = \sur(H,E)$ for every~$E$ in the
    sum in~\eqref{eqn:splitsplit2}, and by assumption we
    have~$\hom(G,D) = \hom(H,D)$. The conclusion is
    that~$\sur(G,D) = \sur(H,D)$, as was to be proved.
\end{proof}

Several remarks about Theorem \ref{theorem_general_lovasz_chaudhuri_vardi} are now in order.

First, Theorem
\ref{theorem_general_lovasz_chaudhuri_vardi} is indeed a common
generalization of Theorem \ref{lovasz_theorem} and Theorem
\ref{chaudhuri_vardi_theorem} because it is clear that the
condition~$\Ext(\classofgraphs) = \classofgraphs$ holds for the
class~$\classofgraphs$ of all graphs.

Second,
since~$\Ext(\class{F}) = \Sur(\class{F}) \intsc \Inj(\class{F})$, we
have that if~$\Sur(\class{F}) = \class{F}$  holds or
if~$\Inj(\class{F}) = \class{F}$ holds,
then~$\Ext(\class{F}) = \class{F}$ holds. Therefore,
 the special case $\Ext(\class{F}) = \class{F}$ mentioned in
Theorem~\ref{theorem_general_lovasz_chaudhuri_vardi} applies to every
class~$\class{F}$ of graphs such that~$\Sur(\class{F}) = \class{F}$
or~$\Inj(\class{F}) = \class{F}$.

Third, the special case $\Ext(\class{F}) = \class{F}$ mentioned in Theorem~\ref{theorem_general_lovasz_chaudhuri_vardi} holds for each
of the following classes~$\class{F}$ of graphs: the
class~$\classofpaths$ of all paths, the class~$\classoftrees$ of all
trees, the class~$\classoftrees[k]$ of graphs of treewidth at
most~$k$, the class~$\classofcliques$ of all cliques, the
class~$\class{X}_k$ \ignore{($\class{X}$ for `color') $\class{F}_k$} of all~$k$-colorable graphs, and the
class~$\class{D}_k$ of all graphs of degree at most~$k$. For the
classes~$\classoftrees[k]$,~$\class{X}_k$, \ignore{$\class{F}_k$} and~$\class{D}_k$, we
actually have~$\Inj(\class{F}) = \class{F}$, while for~$\classofcliques$,
we have~$\Sur(\classofcliques) = \classofcliques$.

To see that~$\Ext(\classoftrees) = \classoftrees$, it suffices to
show~$\Ext(\classoftrees) \subseteq \classoftrees$ since it is clear
that~$\classoftrees \subseteq \Ext(\classoftrees)$. First, observe
that trees are connected graphs and hence so are their homomorphic
images since the homomorphic image of a connected graph is
connected. In other words, the graphs in~$\Sur(\classoftrees)$ are
connected. Next,~$\Inj(\classoftrees)$ contains all subgraphs of
trees. Therefore, every graph
in~$\Ext(\classoftrees) = \Sur(\classoftrees) \intsc
\Inj(\classoftrees)$ is a connected subgraph of a tree, which must be
a tree. It follows that~$\Ext(\classoftrees) \subseteq \classoftrees$.
The same type of argument works for~$\classofpaths$.

The final remark of this section concerns the class~$\classoftrees[k]$
of graphs of treewidth at most~$k$. It follows from the above discussion that the
left profile~$\lefthomvec(\classoftrees[k], \placeholder)$
characterizes isomorphism on~$\classoftrees[k]$, namely, for
graphs~$G$ and~$H$ in~$\classoftrees[k]$ it holds that~$G$ and~$H$ are
isomorphic if and only if~$\lefthomvec(\classoftrees[k], G) =
\lefthomvec(\classoftrees[k], H)$. By Theorem 7 in \cite{ DBLP:journals/jgt/Dvorak10} and also by Theorems~1 and~3 in
\cite{DBLP:conf/icalp/DellGR18}, the
vector~$\lefthomvec(\classoftrees[k], \placeholder)$ also
characterizes~$\clog^{k + 1}$-equivalence; see also
Theorems~\ref{thm:tinhofer},~\ref{thm:CFI},~\ref{thm:grohe-frac-isom}
and~\ref{thm:grohe-ck-equiv} in this paper. Consequently, the
equivalence in~$\clog^{k + 1}$ determines isomorphism
on~$\classoftrees[k]$. This reproves and strengthens a result of Grohe
and Mari\~no \cite{DBLP:conf/icdt/GroheM99}; see Theorem~4 in that
paper.

\section{Right Profiles} \label{sec:right}
Let~$k\geq 1$ be a fixed positive integer. The~$k$-\emph{dimensional
  Weisfeiler-Leman method} \cite{weisfeiler2006construction} is an
iterative algorithm for graph isomorphism that assigns colors to
each~$k$-tuple of vertices. At each iteration, the algorithm refines
the color classes of the~$k$-tuples; the algorithm stops when a
\emph{stable} coloring is achieved, i.e., when no further refinement
of the color classes of the~$k$-tuples is possible. This method is a
heuristic for graph isomorphism in the sense that if
the~$k$-dimensional Weisfeiler-Leman method produces different stable
colorings when applied to two graphs~$G$ and~$H$, then the graphs~$G$
and~$H$ are not isomorphic. However, this is not a complete
isomorphism test because, for each~$k\geq 1$, there are non-isomorphic
graphs~$G_k$ and~$H_k$ on which the~$k$-dimensional Weisfeiler-Leman
method produces the same coloring. We
write~$G\equiv^k_{\mathrm{WL}} H$ to denote that the graphs~$G$
and~$H$ are indistinguishable via the~$k$-dimensional
Weisfeiler-Leman method. In view of the preceding
discussion,~$\equiv^k_{\mathrm{WL}}$ is an equivalence relation that
is strictly coarser than isomorphism.

We describe in more detail the~$1$-dimensional Weisfeiler-Leman
method.  Initially, all vertices have the same color. At each
iteration, two vertices~$u$ and~$v$ that were in the same color class
are assigned different colors if there is a color~$c$ such that the
number of neighbors of~$u$ that have color~$c$ is different from the
number of neighbors of~$v$ that have color~$c$. This process continues
until all vertices in the same color class have the same number of
neighbors in every color class.  Tinhofer
\cite{DBLP:journals/computing/Tinhofer86,DBLP:journals/dam/Tinhofer91}
has shown that the~$1$-dimensional Weisfeiler-Leman method amounts to
the \emph{fractional isomorphism} test. Let~$G=(V,E)$ and~$H=(V,E')$
be two graphs on the same set of vertices and let~$A$ and~$B$ be their
adjacency matrices.  Clearly,~$G$ and~$H$ are isomorphic if and only
if there is permutation matrix~$X$ such that~$AX = XB$, where a
permutation matrix is a square matrix of~$0$'s and~$1$'s such that
each row and each column contains exactly one~$1$. Consider now the
%
%
system of linear equations~$AX = XB$,~$X e =
e$,~$e^{\top} X = e^{\top}$, where~$e$ is the vector
of~$1$'s of length~$|V|$.  The graphs~$G$ and~$H$ are said to be
\emph{fractionally isomorphic} if this system has a non-negative
solution over the rational numbers, which is called a \emph{fractional
  isomorphism} between~$G$ and~$H$.

\begin{theorem} \label{thm:tinhofer}
  \cite{DBLP:journals/computing/Tinhofer86,DBLP:journals/dam/Tinhofer91}
  For every two graphs~$G$ and~$H$, we have that~$G$ and~$H$ are
  fractionally isomorphic if and only if~$G\equiv^1_{\rm WL} H$.
\end{theorem}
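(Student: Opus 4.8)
The plan is to prove both implications through the standard bridge between the $1$-dimensional Weisfeiler--Lehman stable coloring and \emph{equitable partitions}. Recall that the stable WL coloring of a graph $G$ induces a partition $\pi_G=\{V_1,\dots,V_r\}$ of $\vertex(G)$ that is \emph{equitable}: for all $i,j$ there is a number $q_{ij}$ such that every vertex of $V_i$ has exactly $q_{ij}$ neighbors in $V_j$; moreover $\pi_G$ is the coarsest such partition. I will use the standard reformulation that $G\equiv^1_{\mathrm{WL}}H$ holds if and only if $G$ and $H$ admit equitable partitions $\{V_1,\dots,V_r\}$ and $\{W_1,\dots,W_r\}$ with $\card{V_i}=\card{W_i}$ for all $i$ and the same quotient matrix $Q=(q_{ij})$; taking coarsest equitable partitions identifies this with the round-by-round color refinements terminating with the same multiset of colors (equivalently, running WL on $G\dunion H$). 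Throughout I work with the adjacency matrices $A$ of $G$ and $B$ of $H$ on the common vertex set and record that a fractional isomorphism is exactly a doubly stochastic matrix $X$ with $AX=XB$ (being square, it already forces $\card{\vertex(G)}=\card{\vertex(H)}$).

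For the direction $G\equiv^1_{\mathrm{WL}}H\Rightarrow$ fractionally isomorphic, I would exhibit an explicit fractional isomorphism. Given common equitable partitions with equal class sizes $n_i:=\card{V_i}=\card{W_i}$ and common quotient matrix $Q$, define $X$ block-wise: on the $(V_i,W_i)$ block put the $n_i\times n_i$ matrix with all entries $1/n_i$, and put $0$ everywhere else. Then all row and column sums equal $1$, so $X$ is doubly stochastic. To verify $AX=XB$, for $u\in V_i$ and $v\in W_j$ one computes $(AX)_{uv}=\sum_{w\sim_G u}X_{wv}=q_{ij}/n_j$ and $(XB)_{uv}=\sum_{w\sim_H v}X_{uw}=q_{ji}/n_i$, and double-counting the edges between $V_i$ and $V_j$ in $G$ gives $n_iq_{ij}=n_jq_{ji}$; since the quotient matrices of $G$ and $H$ coincide, the two expressions agree, so $X$ is a fractional isomorphism.

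For the converse, suppose $AX=XB$ with $X$ doubly stochastic, and let $C^t_G,C^t_H$ be the round-$t$ WL colorings (computed on $G\dunion H$). I will prove by induction on $t$: (a) $X_{uv}>0$ implies $C^t_G(u)=C^t_H(v)$; and (b) every round-$t$ color class has the same size in $G$ and in $H$. The base case is trivial. For the step, (a) at round $t$ makes $X$ block-diagonal along the round-$t$ color partition, and with (b) this forces each color block of $X$ to be square and doubly stochastic, whence $X\mathbf 1^H_c=\mathbf 1^G_c$ for the indicator $\mathbf 1_c$ of any round-$t$ color $c$. Applying $AX=XB$ and, using symmetry of $A,B$, its transpose $X^\top A=BX^\top$ to these indicators yields, with $d^G_c(u)$ the number of round-$t$-color-$c$ neighbors of $u$, the two identities $d^G_c(u)=\sum_v X_{uv}\,d^H_c(v)$ and $d^H_c(v)=\sum_u X_{uv}\,d^G_c(u)$. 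Evaluating $\sum_{u,v}X_{uv}\,d^G_c(u)\,d^H_c(v)$ in both groupings gives $\sum_u d^G_c(u)^2=\sum_v d^H_c(v)^2$; combining this with Jensen's inequality $d^G_c(u)^2=\bigl(\sum_v X_{uv}d^H_c(v)\bigr)^2\le\sum_v X_{uv}\,d^H_c(v)^2$ summed over $u$ forces equality everywhere, so $d^H_c(v)$ is constant, and equal to $d^G_c(u)$, on $\{v:X_{uv}>0\}$. This is precisely (a) at round $t+1$; refining each block of $X$ by the round-$(t+1)$ colors and using that it stays doubly stochastic gives (b) at round $t+1$. Since WL stabilizes after finitely many rounds, (a) and (b) at the stable round say $X$ matches equal-sized stable-color classes of $G$ and $H$, so the two graphs have the same multiset of stable colors, i.e.\ $G\equiv^1_{\mathrm{WL}}H$.

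The main obstacle is this converse direction, specifically the step asserting that a fractional isomorphism cannot mix vertices of different WL colors at any round: $AX=XB$ only expresses certain degree values as convex combinations of others, which is a priori weaker than forcing equality. The resolution is the pairing of the two-way double-counting identity $\sum_u d^G_c(u)^2=\sum_v d^H_c(v)^2$ with the convexity (Jensen) estimate, which together pin down pointwise equality. An alternative would be to argue directly on the coarsest equitable partitions via the relation $AS=SQ$ for the characteristic matrix $S$ of the stable partition, but the round-by-round induction keeps the bookkeeping lighter and self-contained.
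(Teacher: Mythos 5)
The paper does not prove Theorem~\ref{thm:tinhofer}; it is imported as a black box from Tinhofer's work, so there is no internal proof to compare against. Your argument is a correct, self-contained proof and follows the standard route through equitable partitions. The forward direction is fine: the stable WL partition is equitable, $\equiv^1_{\mathrm{WL}}$ (run on $G\dunion H$) gives matching class sizes and quotient matrices, the block-constant matrix with entries $1/n_i$ is doubly stochastic, and the identity $n_i q_{ij}=n_j q_{ji}$ from double-counting the $V_i$--$V_j$ edges is exactly what reconciles $(AX)_{uv}=q_{ij}/n_j$ with $(XB)_{uv}=q_{ji}/n_i$. The converse is where a naive attempt stalls, and you handle the crux correctly: from $AX\mathbf 1^H_c=XB\mathbf 1^H_c$ and its transpose you get $d^G_c(u)=\sum_v X_{uv}d^H_c(v)$ and $d^H_c(v)=\sum_u X_{uv}d^G_c(u)$, the two groupings of $\sum_{u,v}X_{uv}d^G_c(u)d^H_c(v)$ give $\sum_u d^G_c(u)^2=\sum_v d^H_c(v)^2$, and equality in Jensen for the strictly convex square function then forces $d^H_c$ to be constant and equal to $d^G_c(u)$ on the support of each row, which is precisely invariant (a) at the next round; invariant (b) follows because a nonnegative matrix whose relevant rows and columns all sum to $1$ must pair up classes of equal size. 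Two points you rely on and state correctly, which a reader should not gloss over, are that the colorings must be computed on $G\dunion H$ so that color names are shared, and that (a) together with (b) at round $t$ is what makes each color block of $X$ square and doubly stochastic, licensing $X\mathbf 1^H_c=\mathbf 1^G_c$. An equivalent finish, as you note, works directly with the coarsest equitable partitions via $AS=SQ$; your round-by-round induction is the more elementary and self-contained of the two.
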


Cai, F\"urer, and Immerman \cite{DBLP:journals/combinatorica/CaiFI92}
characterized indistinguishability via the Weisfeiler-Leman method in
terms of indistinguishability in first-order logic with counting and a
fixed number of variables. A \emph{counting quantifier} is a
quantifier of the form~$\exists i x$, where~$i$ is a positive
integer. The meaning of a formula~$\exists i x \varphi(x)$ is that
there are at least~$i$ distinct elements~$x$ such that~$\varphi(x)$
holds. If~$\varphi$ is an FO-formula, then the
formula~$\exists i x \varphi(x)$ is clearly equivalent to an
FO-formula, so counting quantifiers do not add expressive power to
first-order logic. Counting quantifiers, however, become interesting
when we consider logics with a fixed number of distinct variables. For
every~$k\geq 1$, let~$\mathrm{FO}^k$ be the fragment of first-order
logic FO consisting of all FO-formulas with at most~$k$ distinct
variables, and let~$\Clogic^k$ be the logic obtained
from~$\mathrm{FO}^k$ by augmenting the syntax with all counting
quantifiers~$\exists i x$, where~$i\geq 1$. For example, the
formula~$$\exists i x (x=x) \land \neg \exists (i+1) x (x=x) \land
\forall x\forall y ( x \not= y \rightarrow E(x,y))$$ is
a~$\Clogic^2$-formula that is satisfied by a graph~$G$ precisely
when~$G$ is the clique~$K_i$ with~$i$ vertices.  We say that two
graphs~$G$ and~$H$ are~$\Clogic^k$-\emph{equivalent}, denoted
by~$G \equiv_\Clogic^k H$, if~$G$ and~$H$ satisfy the
same~$\Clogic^k$-sentences. Detailed information about counting
logics with finitely many variables can be found in the monograph
\cite{DBLP:books/cu/O2017}.

\begin{theorem} \label{thm:CFI}
  \cite{DBLP:journals/combinatorica/CaiFI92} For every~$k\geq 2$ and
  every two graphs~$G$ and~$H$, we have that~$G\equiv_{\Clogic}^k H$
  if and only if~$G \equiv^{k-1}_{\mathrm{WL}} H$.
\end{theorem}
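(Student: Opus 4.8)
The plan is to prove Theorem~\ref{thm:CFI} by the standard route through \emph{pebble games}, specifically the bijective $k$-pebble game of Hella. This game is played on a pair of graphs $(G,H)$ with $k$ pairs of pebbles: in each round Spoiler picks up a pair of pebbles, Duplicator responds with a bijection $f\colon \vertex(G)\to\vertex(H)$ (and loses outright if $\card{\vertex(G)}\neq\card{\vertex(H)}$), and Spoiler then places the chosen pair on a vertex $v\in\vertex(G)$ and on $f(v)$; Duplicator survives a position as long as the correspondence induced by the currently pebbled vertices is a partial isomorphism, and wins the game if she survives forever. The argument factors into two links: (A)~$G\equiv_\Clogic^k H$ if and only if Duplicator has a winning strategy in the bijective $k$-pebble game on $(G,H)$; and (B)~Duplicator has a winning strategy in that game if and only if $G\equiv^{k-1}_{\mathrm{WL}} H$. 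The hypothesis $k\geq 2$ is what makes the second link meaningful: it guarantees $k-1\geq 1$, so that the Weisfeiler--Lehman method of dimension $k-1$ --- and the atomic types of pairs of vertices on which it is seeded --- are actually available.

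For link (A) I would argue by induction on quantifier rank. Fixing the $r$-round version of the game, one shows by induction on $r$ that two tuples $\bar a$ in $G$ and $\bar b$ in $H$ of the same length $\ell\leq k$ occupy a position from which Duplicator wins the $r$-round game if and only if they satisfy the same $\Clogic^k$-formulas of quantifier rank at most $r$. The Boolean cases are immediate, and the only substantial case is the counting quantifier $\exists i x\,\varphi$. If $\bar a$ and $\bar b$ agree on all rank-$(r-1)$ formulas, then partitioning $\vertex(G)\cup\vertex(H)$ into rank-$(r-1)$ $\Clogic^k$-type classes relative to the pebbled coordinates yields, in each class, the same number of $G$-vertices as $H$-vertices --- this cardinality being itself expressible by a counting quantifier --- so Duplicator can fix a class-preserving bijection, which defeats every placement Spoiler can make with the lifted pebble; conversely, a mismatch in one of these counts gives Spoiler a winning move. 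Letting $r\to\infty$ and using that a finite graph realizes only finitely many $\Clogic^k$-types of each arity yields the stated equivalence; this is Hella's theorem, and I would either reprove it along these lines or cite it.

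For link (B) I would first spell out the $(k-1)$-dimensional Weisfeiler--Lehman colouring: the colour of a $(k-1)$-tuple $\bar a$ is initialised to its atomic type and is refined, round by round, to record the multiset over all $w\in\vertex(G)$ of the tuples of colours of the $k-1$ tuples obtained by substituting $w$ into each coordinate of $\bar a$; the colouring stabilises after finitely many rounds because colour classes only ever split. I would then prove, by induction on the round number $r$, that Duplicator wins the $r$-round bijective $k$-pebble game from the position in which the first $k-1$ pebble pairs occupy $\bar a$ and $\bar b$ respectively if and only if $\bar a$ and $\bar b$ receive the same colour after $r$ refinement rounds: Spoiler's act of lifting one of those $k-1$ pebbles is exactly the operation of substituting a fresh vertex into the corresponding coordinate, and the availability of the $k$-th pebble is what lets the refinement be iterated. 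Finally, $G\equiv^{k-1}_{\mathrm{WL}} H$ says precisely that the multisets of stable $(k-1)$-tuple colours of $G$ and of $H$ coincide, and a counting argument identical to the one in link (A) converts this into the existence of the bijections Duplicator must supply during the first $k-1$ rounds, before the inductive correspondence above takes over; hence it is equivalent to Duplicator winning the unbounded game. Chaining links (A) and (B) gives the theorem.

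The step I expect to be the main obstacle is the bookkeeping in link (B): matching the arity-$(k-1)$ refinement step --- a multiset indexed both by the substituted vertex and by the coordinate being overwritten --- against the moves of a game that has $k$ pebbles, and keeping the off-by-one between ``$k$ pebbles'', ``$(k-1)$-tuples'', and ``$\Clogic^k$'' consistent through the whole induction. Handling the passage from the global multiset condition that defines $\equiv^{k-1}_{\mathrm{WL}}$ to a statement about individual game positions is the other delicate point, as it again needs the Hall/pigeonhole-style counting lemma used for the counting quantifier; once that lemma is isolated, however, both links become routine.
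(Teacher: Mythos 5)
The paper does not actually prove Theorem~\ref{thm:CFI}: it is quoted from Cai, F\"urer, and Immerman \cite{DBLP:journals/combinatorica/CaiFI92} and used as a black box, so there is no in-paper argument to measure your proposal against. On its own terms, your outline is the standard modern proof and is essentially correct. It differs from the original CFI route, which characterizes $\Clogic^k$-equivalence via the counting pebble game of Immerman and Lander; you instead factor both sides through Hella's bijective $k$-pebble game, which makes the counting-quantifier case of link (A) cleaner (one class-preserving bijection handles all counting thresholds simultaneously) and turns link (B) into nearly a definition-chase, at the cost of the Hall/pigeonhole lemma that converts ``equal counts in every rank-$(r-1)$ type class'' into a global bijection. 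Two places where the bookkeeping genuinely needs care, as you yourself flag: in link (B) one WL refinement round corresponds to a \emph{pair} of game moves (place the $k$-th pebble on $w$, then lift pebble $i$, which is what produces the substituted tuple --- lifting alone substitutes nothing), so the round-indexed equivalence you state needs its indices adjusted, though this washes out in the unbounded game that the theorem actually requires; and in link (A) you need each rank-$(r-1)$ $\Clogic^k$-type of a tuple of length at most $k$ to be isolated by a single $\Clogic^k$-formula of rank $r-1$, which holds because there are only finitely many pairwise inequivalent such formulas over a finite relational vocabulary. Neither point is a gap in the approach; both are routine once isolated.
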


An immediate consequence of Theorems \ref{thm:tinhofer} and
\ref{thm:CFI} is that two graphs are fractionally isomorphic if and
only if they are~$\Clogic^2$-equivalent. The following family of
pairwise fractionally isomorphic graphs will be of interest to us in
the sequel.

\begin{example}\label{example_frac_isom_graphs}
  For every~$n \geq 3$, consider the graphs~$G_n$ and~$H_n$ defined as
  follows:~$G_n \defas A_n \dunion B_n$, where $\dunion$ denotes the
  disjoint union of two graphs, $A_n$ and~$B_n$ are two
  isomorphic copies of~$\clique[n]$, and~$H_n$ is formed from~$G_n$ by
  first removing an arbitrary edge~$(a_1, a_2) \in \edge(A_n)$ and an
  arbitrary edge~$(b_1, b_2) \in \edge(B_n)$, and then adding two
  edges~$(a_1, b_1), (a_2, b_2)$. The graphs~$G_3, H_3$ are depicted in
  Figure~\ref{fig_frac_isom}.
\begin{figure}[!t]
\centering
\begin{tikzpicture}[every node/.style={draw=black,thick,circle,inner sep=0pt}]
\node[circle,inner sep=2pt,minimum size=5pt] (a3) {$a_3$};
\path (a3) ++(60:35pt) node (a1) [circle,inner sep=2pt,minimum size=5pt] {$a_1$};
\path (a1) ++(0:35pt) node (b1) [circle,inner sep=2pt,minimum size=5pt] {$b_1$};
\path (b1) ++(-60:35pt) node (b3) [circle,inner sep=2pt,minimum size=5pt] {$b_3$};
\path (b3) ++(-120:35pt) node (b2) [circle,inner sep=2pt,minimum size=5pt] {$b_2$};
\path (a3) ++(-60:35pt) node (a2) [circle,inner sep=2pt,minimum size=5pt] {$a_2$};
\path (b3) ++(0:70pt) node (c3) [circle,inner sep=2pt,minimum size=5pt] {$a_3$};
\path (c3) ++(60:35pt) node (c1) [circle,inner sep=2pt,minimum size=5pt] {$a_1$};
\path (c1) ++(0:35pt) node (d1) [circle,inner sep=2pt,minimum size=5pt] {$b_1$};
\path (d1) ++(-60:35pt) node (d3) [circle,inner sep=2pt,minimum size=5pt] {$b_3$};
\path (d3) ++(-120:35pt) node (d2) [circle,inner sep=2pt,minimum size=5pt] {$b_2$};
\path (d2) ++(180:35pt) node (c2) [circle,inner sep=2pt,minimum size=5pt] {$a_2$};
\draw
(a3) edge (a1)
(a1) edge (a2)
(a2) edge (a3)
(b1) edge (b3)
(b3) edge (b2)
(b2) edge (b1)
(c3) edge (c1)
(c1) edge (d1)
(d1) edge (d3)
(d3) edge (d2)
(d2) edge (c2)
(c2) edge (c3)
;
\end{tikzpicture}
\caption{Two fractionally isomorphic graphs, $G_3$ (left) and $H_3$ (right).}
\label{fig_frac_isom}
\end{figure}
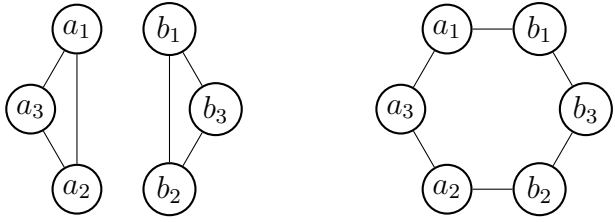

The graphs~$G_n$ and~$H_n$ are fractionally isomorphic. The reason is
that both are regular (each vertex has~$n-1$ neighbors) and they have
the same number of vertices and edges, thus the~$1$-dimensional
Weisfeiler-Leman method terminates at the first step with all
vertices assigned the same color.
\end{example}

\subsection{Fractional Isomorphism vs.\ Right Profiles} \label{subsec:fractional}

Dell, Grohe, and Rattan \cite{DBLP:conf/icalp/DellGR18} showed that
 fractional isomorphism  can be characterized by restricting the left profile.

 \begin{theorem} \label{thm:grohe-frac-isom}
   \cite{DBLP:conf/icalp/DellGR18} For every two graphs~$G$ and~$H$,
   we have that~$G$ and~$H$ are fractionally isomorphic if and only
   if~$\lefthomvec(\class{T}, G) = \lefthomvec(\class{T}, H)$,
   where~$\class{T}$ is the class of all trees.
 \end{theorem}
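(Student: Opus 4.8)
The plan is to route the equivalence through color refinement, i.e.\ the $1$-dimensional Weisfeiler--Lehman method: by Theorem~\ref{thm:tinhofer} it suffices to prove that $\righthomvec$-style left-homomorphism vectors restricted to trees coincide, $\lefthomvec(\class{T},G)=\lefthomvec(\class{T},H)$, exactly when $G\equiv^1_{\mathrm{WL}} H$. The basic tool is the recursion for \emph{rooted} trees: for a tree $T$ with root $r$ whose children carry subtrees $T_1,\dots,T_d$ rooted at the neighbours $r_1,\dots,r_d$ of $r$, and a vertex $v$ of $G$,
\[
\hom_r(T,G,v)=\prod_{i=1}^{d}\ \sum_{w:\,(v,w)\in\edge(G)}\hom_{r_i}(T_i,G,w),
\]
where $\hom_r(T,G,v)$ counts homomorphisms $T\to G$ sending $r$ to $v$, and $\hom(T,G)=\sum_{v\in\vertex(G)}\hom_r(T,G,v)$ for any choice of root. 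Since $\hom(\,\cdot\,,G)$ is multiplicative over disjoint unions of trees, it is enough to deal throughout with connected, hence rooted, trees.

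For the implication from fractional isomorphism to equal tree counts, I would use the reformulation of Theorem~\ref{thm:tinhofer} that $G\equiv^1_{\mathrm{WL}} H$ means color refinement produces a common stable equitable partition: the stable colour classes $C_1^G,\dots,C_p^G$ of $G$ and $C_1^H,\dots,C_p^H$ of $H$ can be matched so that $|C_i^G|=|C_i^H|$ for all $i$ and they share a single quotient matrix $Q$, where $Q_{ij}$ is the number of neighbours in class $j$ of a vertex in class $i$. A straightforward induction on the height of $T$, using the recursion above and replacing $\sum_{w\sim v}$ by $\sum_j Q_{ij}\cdot(\text{value on class }j)$ via equitability, shows that $\hom_r(T,G,v)$ depends only on the class of $v$ and, as a function $h_T$ of that class, is computed purely from $Q$. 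Hence $\hom(T,G)=\sum_{i=1}^p|C_i^G|\,h_T(i)$ and likewise for $H$ with the same $h_T$, so equality of class sizes gives $\hom(T,G)=\hom(T,H)$ for every tree $T$.

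For the converse I would argue the contrapositive. If $G$ and $H$ are not fractionally isomorphic then, running color refinement on $G\dunion H$ to obtain a common set of stable colours $C_1,\dots,C_p$, the integer vectors $(|C_i^G|)_i$ and $(|C_i^H|)_i$ differ, where $C_i^G=C_i\cap\vertex(G)$ etc.\ (and $C_i^G$ is exactly the $i$-th stable class of $G$, since the $H$-part cannot interfere). By the computation of the previous paragraph, $\hom(T,G)-\hom(T,H)=\sum_i(|C_i^G|-|C_i^H|)\,h_T(i)$ for every rooted tree $T$, with the \emph{same} $h_T$ for both graphs. Thus it suffices to exhibit rooted trees $T^{(1)},\dots,T^{(p)}$ for which the matrix $M=(h_{T^{(j)}}(i))_{i,j}$ is invertible over $\mathbb{Q}$: then $M^{\!\top}$ applied to $(|C_i^G|)_i$ and to $(|C_i^H|)_i$ yields distinct vectors, so $\hom(T^{(j)},G)\ne\hom(T^{(j)},H)$ for some $j$, contradicting the hypothesis; combining with Theorem~\ref{thm:tinhofer} finishes the proof.

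The main obstacle is precisely the construction of this invertible $M$, i.e.\ showing that the stable colour classes are separated by rooted-tree homomorphism counts in a \emph{linearly independent}, not merely injective, way. Here I would exploit multiplicativity: if $T\cdot T'$ denotes the rooted tree obtained by identifying the roots of $T$ and $T'$, then $\hom_r(T\cdot T',G,v)=\hom_r(T,G,v)\cdot\hom_r(T',G,v)$, so on a stable partition the family of profile vectors $\{(h_T(i))_{i=1}^{p}: T\text{ a rooted tree}\}$ is a subalgebra of $\mathbb{Q}^p$ under coordinatewise multiplication, and it contains $\mathbf{1}$ (the one-vertex tree). Because color refinement distinguishes any two stable colours, an easy induction shows some rooted tree $T$ has $h_T(i)\ne h_T(i')$ whenever $i\ne i'$, so this subalgebra separates coordinates; a finite Stone--Weierstrass argument (taking products of affine rescalings $\tfrac{f-f_{i'}\mathbf{1}}{f_i-f_{i'}}$ to manufacture the standard basis vectors) then forces the subalgebra to be all of $\mathbb{Q}^p$. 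In particular $p$ of the profile vectors are linearly independent, which supplies the trees $T^{(1)},\dots,T^{(p)}$ and the invertible matrix $M$ needed above.
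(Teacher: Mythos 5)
The paper does not prove this theorem; it is quoted from Dell, Grohe, and Rattan, so there is no in-paper argument to compare against. Judged on its own, your reconstruction follows the standard route (essentially Dv\v{o}r\'ak's argument, also used by Dell--Grohe--Rattan): reduce to colour refinement via Theorem~\ref{thm:tinhofer}, show that rooted-tree counts $\hom_r(T,G,v)$ are constant on the classes of the stable equitable partition and are computed from the quotient matrix alone (which gives the forward direction), and for the converse show that the profile vectors $(h_T(i))_i$ span $\mathbb{Q}^p$, so that any discrepancy in class sizes is detected by some tree. The recursion, the multiplicativity under root-identification, and the ``finite Stone--Weierstrass'' step (linear span of a unital, multiplicatively closed, point-separating family in $\mathbb{Q}^p$ is all of $\mathbb{Q}^p$) are all correct.

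The one place where the write-up understates the difficulty is the claim that ``an easy induction shows'' some rooted tree separates any two distinct stable colours. This separation statement is the actual crux of the theorem, and the induction is not self-contained as described: to pass from ``$u$ and $v$ have different numbers of neighbours of some round-$t$ colour $c$'' to ``some tree of height $t+1$ separates $u$ and $v$,'' you need the indicator vector of the class $c$ to lie in the linear span of the height-at-most-$t$ tree profiles --- that is, you need the Stone--Weierstrass conclusion at level $t$ before you can prove separation at level $t+1$. So the separation claim and the spanning claim cannot be established sequentially in the order you present them; they must be proved together by a simultaneous induction on the refinement round (spanning at round $t$ gives indicators of round-$t$ classes as linear combinations of tree profiles, attaching those trees to a new root gives separation at round $t+1$, and separation plus multiplicativity gives spanning at round $t+1$). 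With that reorganization your argument is complete; as written, the key step is asserted rather than proved.
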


 In contrast, we show that fractional isomorphism cannot be
 characterized by restricting the right profile. While
 this result will follow from the more general
 Theorem~\ref{thm:general} proved in the next subsection, the proof for
 the special case of fractional isomorphism is more elementary and
 self-contained, so we include it here.

 \begin{theorem} \label{thm:frac-isom-right} There is no
   class~$\class{F}$ of graphs such that for every two graphs~$G$
   and~$H$, we have that~$G$ and~$H$ are fractionally isomorphic if and
   only if~$\righthomvec(G, \class{F}) = \righthomvec(H, \class{F})$.
\end{theorem}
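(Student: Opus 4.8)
The plan is a proof by contradiction. Suppose there is a class~$\class{F}$ of graphs such that, for all graphs~$G$ and~$H$, we have that~$G$ and~$H$ are fractionally isomorphic if and only if~$\righthomvec(G,\class{F}) = \righthomvec(H,\class{F})$. I will derive a contradiction in two moves: first, using the fractionally isomorphic pairs~$G_n \defas \clique[n] \dunion \clique[n]$ and~$H_n$ from Example~\ref{example_frac_isom_graphs}, I will show that~$\class{F}$ cannot contain any graph that has an edge; then I will observe that a class consisting only of edgeless graphs is far too weak to capture fractional isomorphism.

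For the first move, suppose for contradiction that some~$F \in \class{F}$ has an edge, and let~$\omega \geq 2$ denote the clique number of~$F$ (the number of vertices in a largest clique of~$F$). Put~$n \defas \omega+1 \geq 3$. On one side, $\hom(G_n, F) = \hom(\clique[n],F)^2$, and this equals~$0$: a homomorphism from~$\clique[n]$ to a loopless graph must send the~$n$ pairwise-adjacent vertices to~$n$ pairwise-distinct, pairwise-adjacent vertices, so~$\hom(\clique[n],F) > 0$ would force~$\omega \geq n = \omega+1$. On the other side, I claim that~$\hom(H_n, F) > 0$. The heart of the matter is that~$H_n$ has chromatic number exactly~$n-1$: it contains the~$(n-1)$-clique on~$\{a_1, a_3, a_4, \ldots, a_n\}$, so its chromatic number is at least~$n-1$; and one can color it with~$n-1$ colors by giving color~$n-1$ to both~$a_1$ and~$a_2$, the colors~$1, \ldots, n-2$ bijectively to~$a_3, \ldots, a_n$, color~$1$ to both~$b_1$ and~$b_2$, and the colors~$2, \ldots, n-1$ bijectively to~$b_3, \ldots, b_n$; a direct check confirms that every edge of~$H_n$ is properly colored, including the two cross edges~$(a_1, b_1)$ and~$(a_2, b_2)$, which receive the color pair~$\{n-1, 1\}$. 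Hence there is a homomorphism~$H_n \to \clique[\omega]$, and since~$F$ has clique number~$\omega$ it contains~$\clique[\omega]$ as a subgraph; composing these gives a homomorphism~$H_n \to F$, so~$\hom(H_n, F) > 0$. Therefore~$\hom(G_n, F) \neq \hom(H_n, F)$, which means~$\righthomvec(G_n, \class{F}) \neq \righthomvec(H_n, \class{F})$ even though~$G_n$ and~$H_n$ are fractionally isomorphic, a contradiction.

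For the second move, we now know that every graph in~$\class{F}$ is edgeless, and~$\class{F}$ is non-empty. But~$\hom(G, F) = 0$ whenever~$G$ has an edge and~$F$ is edgeless, so~$\righthomvec(G, \class{F})$ is the all-zero vector for every graph~$G$ that has at least one edge. In particular, $\righthomvec(\pathgraph[3], \class{F}) = \righthomvec(\clique[3], \class{F})$, while~$\pathgraph[3]$ and~$\clique[3]$ are \emph{not} fractionally isomorphic: they have the same number of vertices but different numbers of edges, and the number of edges of a graph~$G$ equals~$\tfrac12\hom(\clique[2], G)$, where~$\clique[2]$ is a tree, so by Theorem~\ref{thm:grohe-frac-isom} it is invariant under fractional isomorphism. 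This contradicts the assumption on~$\class{F}$ and completes the proof.

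The only non-routine step, and the one I expect to require care in the writeup, is the chromatic-number computation for~$H_n$ in the first move: the two near-cliques that make up~$H_n$ each admit an essentially unique~$(n-1)$-coloring (up to permuting colors), and the content of the claim is that these two local colorings can be chosen so as to agree across \emph{both} cross edges at once. (It is worth recording the smallest case~$n=3$ as a sanity check, where~$H_3$ is the~$6$-cycle~$\cycle[6]$: for any triangle-free~$F$ one has~$\hom(G_3, F) = \hom(\clique[3],F)^2 = 0$, whereas~$\hom(\cycle[6], F) \geq 2$ as soon as~$F$ has an edge, since~$\cycle[6]$ maps onto any single edge.) Everything else—the multiplicativity of~$\hom$ over disjoint unions, the injectivity of homomorphisms out of cliques, and the invariance of the edge count under fractional isomorphism—is standard.
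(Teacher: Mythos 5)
Your proof is correct and follows essentially the same route as the paper's: both arguments split on whether~$\class{F}$ contains a graph with an edge, dispose of the edgeless case by exhibiting two non-fractionally-isomorphic graphs whose restricted right-homomorphism vectors are both identically zero, and in the remaining case exploit the pair~$G_n, H_n$ of Example~\ref{example_frac_isom_graphs} through the two facts that~$\hom(G_n,F)>0$ forces~$\clique[n]\subseteq F$ while~$\clique[n-1]\subseteq F$ forces~$\hom(H_n,F)>0$. The only difference is presentational: the paper turns these facts into an induction showing~$F$ would have to contain arbitrarily large cliques, whereas you jump directly to~$n=\omega(F)+1$; your proper~$(n-1)$-coloring of~$H_n$ is the same map as the paper's explicit surjection~$H_n\to\clique[n-1]$.
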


\begin{proof} Towards a contradiction, assume that such a class
$\class{F}$ exists. We distinguish the following two cases.

\emph{Case 1.}~$\class{F} \subseteq \classofindeps$, i.e., each graph
in~$\class{F}$ is an independent set. This leads immediately to a
contradiction. Indeed, the cliques~$K_2$ and~$K_3$ are not
fractionally isomorphic, but~$\hom(K_2,\class{F})=\hom(K_3,\class{F})$
because, for every~$F \in \class{F}$, we have
that~$\hom(K_2,F)=0=\hom(K_3,F)$.

\emph{Case 2.}~$\class{F} \not \subseteq \classofindeps$, which means
that there is at least one graph~$F\in \class{F}$ such that~$F$
contains~$K_2$ as a subgraph. We now bring into the picture the family
of graphs~$G_n$ and~$H_n$, with~$n\geq 3$, considered in Example
\ref{example_frac_isom_graphs}. Since~$G_n$ and~$H_n$ are fractionally
isomorphic graphs, the hypothesis for the class~$\class{F}$ implies
that~$\hom(G_n,F) = \hom(H_n,F)$, for every~$n\geq 3$.  Furthermore,
we claim that the following properties hold for every graph~$D$ and
every~$n\geq 3$. We use the notation~$G \subseteq H$ to denote the
fact that~$H$ contains~$G$ as a subgraph.

\medskip
\emph{P1.}~ $\hom(G_n, D) > 0$ if and only if $\clique[n] \subseteq D$.

\emph{P2.}~ $\hom(H_n, D) > 0$ if and only if $\clique[n-1] \subseteq D$.
%
\medskip

For the first property, observe that if~$h: G_n \to D$ is a
homomorphism, then the homomorphic image~$h(G_n)$ must
contain~$\clique[n]$ as a subgraph because~$\clique[n]$ is a subgraph
of~$G_n$ and is preserved under~$h$; it follows that~$D$
contains~$\clique[n]$ as a subgraph, too. Conversely, if~$D$
contains~$\clique[n]$ as a subgraph, then there exists a
homomorphism~$h': G_n \to D$ that maps each of the two disjoint copies~$A_n$
and~$B_n$ of~$\clique[n]$ in~$G_n$ to the~$\clique[n]$ in~$D$.

For the second property, the `only if' direction can be argued in the
same way as that in the first part using the fact that~$\clique[n-1]$
is a subgraph of~$H_n$. For the `if' direction, assume that~$D$
contains~$\clique[n - 1]$ as a subgraph; then there is a
homomorphism~$h': H_n \to D$ that maps~$H_n$ onto~$\clique[n - 1]$
in~$D$ where~$h(a_1) = h(a_2) = h(b_3)$ and~$h(b_1) = h(b_2) = h(a_3)$
for some~$a_3 \in \vertex(A_n) \setminus \sete{a_1, a_2}$ and
some~$b_3 \in \vertex(B_n) \setminus \sete{b_1, b_2}$.

Using these two properties, we will show by induction on $n$ that~$F$
contains~$\clique[n]$ as a subgraph, for every~$n \geq 2$, which is
absurd since~$\card{\vertex(F)}$ is finite. The base case~$(n=2)$ is
just the assumption that~$F$ contains~$\clique[2]$ as a subgraph. For
the inductive case, let~$n \geq 2$ and suppose that~$F$
contains~$\clique[n]$ as a subgraph. By taking~$D=F$ and applying the
second property, we have that~$\hom(H_{n + 1}, F) > 0$,
hence~$\hom(G_{n + 1}, F) > 0$
since~$\hom(G_{n + 1}, F) = \hom(H_{n + 1}, F)$. By applying the first
property, we conclude that~$F$ contains~$\clique[n + 1]$ as a
subgraph. This completes the proof of the theorem.
\end{proof}

\ignore{
Then, in particular, for the pairs of graphs $G_n, H_n$ ($n \geq 3$) introduced in Example \ref{example_frac_isom_graphs} we have $\hom(G_n, F) = \hom(H_n, F)$ for all graphs $F \in \class{F}$ because they are fractionally isomorphic.

We prove an auxiliary lemma before deriving a contradiction.

\begin{lemma}
\label{propos_G_n_H_n}
Let $F$ be any graph and $n \geq 3$ be any integer. Then:
\begin{itemize}
\item $\hom(G_n, F) > 0$ if and only if $F$ contains $\clique[n]$ as a subgraph.
\item $\hom(H_n, F) > 0$ if and only if $F$ contains $\clique[n - 1]$ as a subgraph.
\end{itemize}
\end{lemma}
\begin{proof}
Let $F$ and $n$ be as in the premise.

For the first part, observe that if $h: G_n \to F$ is a homomorphism then the homomorphic image $h(G_n)$ must contain $\clique[n]$ as a subgraph because $\clique[n]$ is a subgraph of $G_n$ and is preserved under $h$; it follows that $F$ contains $\clique[n]$ as a subgraph, too. Conversely, if $F$ contains $\clique[n]$ as a subgraph then there exists a homomorphism $h': G_n \to F$ that maps each of the two disjoint copies of $\clique[n]$ in $G_n$ to the $\clique[n]$ in $F$.

For the second part, the `only if' direction can be argued in the same way as that in the first part. And for the `if' direction, assume that $F$ contains $\clique[n - 1]$ as a subgraph, then there is a homomorphism $h': H_n \to F$ that maps $H_n$ onto $\clique[n - 1]$ in $F$ where $h(a_1) = h(a_2) = h(b_3)$ and $h(b_1) = h(b_2) = h(a_3)$ for some $a_3 \in \vertex(A_n) \setminus \sete{a_1, a_2}$ and some $b_3 \in \vertex(B_n) \setminus \sete{b_1, b_2}$.
\end{proof}

Finally, we distinguish three cases each of which leads to an absurdity:
\begin{itemize}
\item \emph{$\class{F}$ is not a subclass of $\classofindeps$.} Let $F \in \class{F} \setminus \classofindeps$. Then we have:
\begin{enumerate}
\item For $n \geq 3$, $\hom(G_n, F) = \hom(H_n, F)$. (Because $F \in \class{F}$.)
\item $F$ contains $\clique[2]$ as a subgraph. (Because $F \notin \classofindeps$.)
\end{enumerate}
However, such a graph $F$ does not exist. More specifically, we show by induction that \emph{$F$ contains $\clique[n]$ as a subgraph for every $n \geq 2$}, which is absurd since $\card{\vertex(F)}$ is finite. The base case is already given in (2) above. For the inductive case, let $i \geq 2$ and suppose that $F$ contains $\clique[i]$ as a subgraph, then the second part of Lemma \ref{propos_G_n_H_n} gives $\hom(H_{i + 1}, F) > 0$ and hence $\hom(G_{i + 1}, F) > 0$ by (1) above. The first part of Lemma \ref{propos_G_n_H_n} then implies that $F$ contains $\clique[i + 1]$ as a subgraph.
\item \emph{$\class{F}$ is a non-empty subclass of $\classofindeps$.} Let $n \geq 3$ and let $J_n$ be a graph formed from $G_n$ by removing an arbitrary edge. Clearly, $G_n$ and $J_n$ are not fractionally isomorphic but
\begin{displaymath}
\hom(G_n, \indep) = \hom(J_n, \indep) = 0
\end{displaymath}
for every $\indep \in \classofindeps$.
\item \emph{$\class{F}$ is empty.} Obviously, $\indep[1]$ and $\indep[2]$ are not fractionally isomorphic but $\righthomvec(\indep[1], \class{F}) = \righthomvec(\indep[2], \class{F}) = \emptyset$.\qedhere
\end{itemize}
}

\subsection{$\Clogic^k$-Equivalence vs.\ Right Profiles} \label{subsec:ck-equiv}
In \cite{DBLP:journals/jgt/Dvorak10} and also in
\cite{DBLP:conf/icalp/DellGR18}   it was shown
that, for every~$k\geq 3$, equivalence in~$\Clogic^k$ can be
characterized by restricting the left profile.

\begin{theorem} \label{thm:grohe-ck-equiv}
  \cite{DBLP:journals/jgt/Dvorak10,DBLP:conf/icalp/DellGR18} For every integer~$k\geq 3$ and
  every two graphs~$G$ and~$H$, we have that~$G \equiv^k_{\Clogic} H$
  if and only
  if~$\lefthomvec(\class{T}_{k-1}, G) = \lefthomvec(\class{T}_{k-1},
  H)$, where~$\class{T}_{k-1}$ is the class of all graphs of treewidth
  at most~$k-1$.
 \end{theorem}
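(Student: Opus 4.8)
Since Theorem~\ref{thm:grohe-ck-equiv} is due to Dell, Grohe, and Rattan, I only sketch how I would prove it. The statement splits into two implications, which I would treat by rather different means. For the forward implication, the goal is to show that for each fixed graph $F$ of treewidth at most $k-1$ the function $G \mapsto \hom(F,G)$ is constant on $\equiv^k_\Clogic$-classes. Fix a tree decomposition of $F$ whose bags have size at most $k$, and root it. Processing the decomposition from the leaves toward the root, I would prove by induction that the number of homomorphisms to a graph $G$ from the subgraph of $F$ spanned by the bags seen so far, extending a prescribed partial map on the current bag, depends only on the $\Clogic^k$-type in $G$ of the tuple of images of that bag; the point is that each bag has at most $k$ vertices, so $k$ variables suffice to describe it, and at a node where a vertex is forgotten the induction step requires precisely a count, over vertices of $G$ of a given $\Clogic^k$-type, that a counting quantifier can perform. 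Summing at the root then gives $\hom(F,G)=\hom(F,H)$ whenever $G\equiv^k_\Clogic H$. Equivalently, one can package this as an explicit bijection from $\Hom(F,G)$ to $\Hom(F,H)$ built from Duplicator's winning strategy in the bijective $k$-pebble game that characterizes $\Clogic^k$-equivalence.

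For the converse I would argue by contraposition: given $G\not\equiv^k_\Clogic H$, I must produce a graph $F$ of treewidth at most $k-1$ with $\hom(F,G)\neq\hom(F,H)$. By Theorem~\ref{thm:CFI} the graphs $G$ and $H$ are distinguished by the $(k-1)$-dimensional Weisfeiler--Lehman method, so the histograms of the colors it assigns to $(k-1)$-tuples differ at some round $r$. The heart of the argument is to attach to this $r$-round computation a finite family of pattern graphs, each obtained by gluing cliques on at most $k$ vertices along shared sets of at most $k-1$ vertices in a tree-like fashion, so that every pattern has treewidth at most $k-1$; and to show that the vector of homomorphism counts of these patterns into a graph $G$ both determines and is determined by the round-$r$ color histogram of $G$. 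Once this correspondence is in place, the mismatch of histograms forces a mismatch in some entry of the count vector, and a M\"obius-type inversion relating homomorphism counts to injective homomorphism counts, of the kind appearing in the proof of Theorem~\ref{theorem_general_lovasz_chaudhuri_vardi}, isolates a single pattern $F$ with $\hom(F,G)\neq\hom(F,H)$. An alternative route for this direction passes through the Sherali--Adams hierarchy: equality of all homomorphism counts from $\classoftrees[k-1]$ is shown to be equivalent to feasibility of the matching level of the Sherali--Adams relaxation of the graph-isomorphism program, and one then invokes the known equivalence between that level and $(k-1)$-dimensional Weisfeiler--Lehman indistinguishability.

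The main obstacle is this converse direction, specifically pinning down the exact dictionary between homomorphism counts from bounded-treewidth graphs and the information computed by bounded-dimensional Weisfeiler--Lehman. The forward direction is in essence careful bookkeeping on top of the counting power of $\Clogic^k$, the only delicate points being the treatment of forgotten bag vertices and the fact that the relevant $\Clogic^k$-types, hence the relevant counts, change as one moves through the tree decomposition. The converse, by contrast, demands a genuine combinatorial construction of the distinguishing pattern together with a linear-algebraic argument that the pattern-count vector discards no Weisfeiler--Lehman information, all while keeping the treewidth capped at $k-1$.
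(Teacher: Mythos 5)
The paper offers no proof of this statement: it is imported verbatim from Dell, Grohe, and Rattan with a citation, so there is no in-paper argument to measure yours against. Your sketch correctly identifies the two standard routes from the literature. The forward direction---dynamic programming over a rooted tree decomposition showing that the extension counts at each bag depend only on the $\Clogic^k$-type of the image tuple, or equivalently a bijection between $\Hom(F,G)$ and $\Hom(F,H)$ assembled from Duplicator's strategy in the bijective $k$-pebble game---is essentially Dvo\v{r}\'ak's argument. For the converse, both routes you name are genuine: the encoding of the stable Weisfeiler--Lehman colour histogram by homomorphism counts of tree-like patterns is Dvo\v{r}\'ak's original proof, and the Sherali--Adams detour through the Atserias--Maneva and Grohe--Otto correspondence is the one Dell, Grohe, and Rattan actually take. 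At the level of detail given I see no error, but two cautions: first, what you rightly call the heart of the converse---the exact dictionary between pattern counts and WL colours, with the off-by-one bookkeeping among $k$ variables, $(k-1)$-dimensional WL, bags of size $k$, and treewidth $k-1$---is where essentially all the work lies and is only named, not carried out; second, your description of the patterns as ``gluings of cliques'' is loose, since the distinguishing graphs are subgraphs of such gluings (partial $k$-trees), built recursively from labelled quantum graphs in Dvo\v{r}\'ak's treatment. As written, this is an accurate outline of known proofs rather than a proof.
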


 In contrast, we show that~$\Clogic^k$-equivalence,~$k\geq 2$, cannot
 be characterized by restricting the right profile. Since
 fractional isomorphism and~$\Clogic^2$-equivalence are the same, the
 case~$k = 2$ coincides with the statement of
 Theorem~\ref{thm:frac-isom-right}. We chose to state Theorem~\ref{thm:frac-isom-right} earlier and present its proof, because that proof is self-contained, unlike the proof of Theorem \ref{thm:Ck-equivalence} below, which requires sophisticated tools.

\begin{theorem} \label{thm:Ck-equivalence} For every~$k \geq 2$,
    there is no class~$\class{F}$ of graphs such that for every two
    graphs~$G$ and~$H$, we have that~$G \equiv^k_\Clogic H$ if
    and only
    if~$\righthomvec(G, \class{F}) = \righthomvec(H, \class{F})$.
\end{theorem}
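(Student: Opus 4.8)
The plan is to argue by contradiction, relying on the $H$-coloring machinery advertised in Section~\ref{sec:intro}. So suppose that for some fixed $k\geq 2$ there is a class $\class{F}$ of graphs such that, for all graphs $G$ and $H$, we have $G\equiv^k_\Clogic H$ if and only if $\righthomvec(G,\class{F})=\righthomvec(H,\class{F})$. The first step is to pin down the shape of $\class{F}$: I would show that every $F\in\class{F}$ is bipartite. Fix $F\in\class{F}$ and consider the $F$-coloring problem, that is, the class $\setm{G}{\hom(G,F)>0}$; it is non-empty and closed under isomorphism, hence a class of graphs in the sense of this paper. If $G\equiv^k_\Clogic H$, then $\hom(G,F)=\hom(H,F)$ by the assumed property of $\class{F}$, so in particular $\hom(G,F)>0$ exactly when $\hom(H,F)>0$; thus the $F$-coloring problem is a union of $\equiv^k_\Clogic$-equivalence classes. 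By the standard fact that, on finite structures, closure under $\equiv^k_\Clogic$ implies definability in the infinitary counting logic $\Clogic^k_{\infty\omega}$ (each $\equiv^k_\Clogic$-class of finite graphs is $\Clogic^k_{\infty\omega}$-definable, so any union of them is as well), the $F$-coloring problem is definable in $\Clogic^\omega_{\infty\omega}$. The definable version of the Hell--Ne\v{s}et\v{r}il dichotomy recalled in Section~\ref{sec:intro} says that this can happen only when $F$ is $2$-colorable; hence every $F\in\class{F}$ is bipartite.

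Once every member of $\class{F}$ is known to be bipartite, the contradiction is immediate. A bipartite graph $F$ admits a homomorphism to $K_2$, so any graph $G$ with $\hom(G,F)>0$ also maps to $K_2$ and is therefore bipartite; contrapositively, $\hom(G,F)=0$ whenever $G$ is non-bipartite and $F\in\class{F}$. Applying this to the non-bipartite graphs $K_3$ and $K_5$ gives $\hom(K_3,F)=0=\hom(K_5,F)$ for every $F\in\class{F}$, so $\righthomvec(K_3,\class{F})=\righthomvec(K_5,\class{F})$ (the all-zero vector). By the assumed property of $\class{F}$, this forces $K_3\equiv^k_\Clogic K_5$, which is absurd: $K_3$ and $K_5$ have different numbers of vertices and are already separated in $\Clogic^1$. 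This contradiction finishes the proof. Since the argument uses nothing about $k$ beyond $k\geq 2$, it also reproves Theorem~\ref{thm:frac-isom-right} (the case $k=2$).

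The whole difficulty is concentrated in the single appeal to the definable $H$-coloring dichotomy in the first step, and only its hardness half is used: that for every non-bipartite $H$ the $H$-coloring problem fails to be definable in $\Clogic^\omega_{\infty\omega}$, equivalently fails to be $\equiv^k_\Clogic$-invariant for every $k$. Proving that is where the ``sophisticated tools'' enter; I expect it to combine the NP-hardness side of the classical Hell--Ne\v{s}et\v{r}il theorem and constraint-satisfaction machinery (Datalog and the bounded-width characterization of tractable $H$-coloring) with Cai--F\"urer--Immerman-style gadget constructions yielding, for each non-bipartite $H$, pairs of graphs that agree on all $\Clogic^k_{\infty\omega}$-sentences for arbitrarily large $k$ yet disagree on $H$-colorability. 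Given that input, the remaining ingredients of the proof above are routine: the finite-model-theoretic equivalence between $\equiv^k_\Clogic$-invariance and $\Clogic^k_{\infty\omega}$-definability, and the elementary observation that non-bipartite graphs have the zero right-homomorphism vector into any class of bipartite graphs.
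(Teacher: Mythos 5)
Your proposal is correct and follows essentially the same route as the paper: the paper derives this theorem from a slightly more general statement (Theorem~\ref{thm:general}) whose proof splits into the case where every graph in $\class{F}$ is $2$-colorable (handled exactly by your $K_3$-versus-clique observation) and the case where some $F\in\class{F}$ is non-$2$-colorable, which is ruled out by combining the definable $H$-coloring dichotomy (Theorem~\ref{thm:definableHcoloring}) with the standard equivalence between $\equiv^k_\Clogic$-invariance and $\Clogic^k_{\infty\omega}$-definability. Your reorganization (first showing every member of $\class{F}$ must be bipartite, then concluding) is the same argument in contrapositive form, and your identification of the definable dichotomy as the sole non-routine ingredient matches the paper's development.
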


This theorem will be an immediate consequence of the following much more
general result, which states that  if an equivalence relation on graphs
 interpolates between~$\equiv^k_\Clogic$ and~$\equiv^1_\Clogic$
for some~$k \geq 2$, then it cannot be characterized by restricting the right profile.
We note that, for loopless graphs (as is our
case), two graphs are~$\equiv^1_\Clogic$-equivalent if and only if they
have the same number of vertices.

\begin{theorem} \label{thm:general} For every equivalence
  relation~$\,\equiv\,$ on graphs that is finer
  than~$\,\equiv^1_\Clogic\,$ and coarser than~$\,\equiv^k_\Clogic\,$
  for some~$k \geq 2$, there is no class~$\class{F}$ of graphs such
  that for every two graphs~$G$ and~$H$, we have that~$G \equiv H$ if
  and only
  if~$\righthomvec(G, \class{F}) = \righthomvec(H, \class{F})$.
 \end{theorem}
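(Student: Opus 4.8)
The plan is to show that any class $\class{F}$ of graphs satisfying the hypothesis of the theorem leads to a contradiction, by exploiting the interplay between $\equiv^1_\Clogic$ (which counts vertices), $\equiv^k_\Clogic$ (which our relation refines), and the structure of $H$-coloring problems. First I would fix such a class $\class{F}$ and, as in the proof of Theorem~\ref{thm:frac-isom-right}, split into cases according to how ``rich'' $\class{F}$ is. The trivial case is when every graph in $\class{F}$ is bipartite (i.e.\ $2$-colorable): then for any graph $G$ that is not $2$-colorable, $\hom(G,F)=0$ for all $F\in\class{F}$, so all non-$2$-colorable graphs with the same number of vertices are identified by $\righthomvec(\cdot,\class{F})$; but it is easy to produce two such graphs that are not even $\equiv^k_\Clogic$-equivalent (e.g.\ by a disjoint-union trick adding isolated vertices or pendant structure), contradicting that $\equiv$ refines $\equiv^k_\Clogic$ and is refined by the right-homomorphism vector equivalence.

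The substantive case is when $\class{F}$ contains at least one non-bipartite graph $F_0$. Here I would bring in the ``definable $H$-coloring dichotomy'' advertised in the introduction: since $F_0$ is not $2$-colorable, the property ``$G\to F_0$'' — equivalently $\hom(G,F_0)>0$ — is \emph{not} definable in $\Clogic^{\omega}_{\infty\omega}$, hence a fortiori not definable in $\Clogic^k$ for any fixed $k$, and in particular it cannot be invariant under any equivalence relation coarser than $\equiv^k_\Clogic$ that itself separates some pair witnessing the non-definability. More precisely, non-definability in $\Clogic^k$ means there exist graphs $G_k \equiv^k_\Clogic H_k$ with $G_k \to F_0$ but $H_k \not\to F_0$; since $\equiv$ is coarser than $\equiv^k_\Clogic$ we get $G_k \equiv H_k$, so by hypothesis $\righthomvec(G_k,\class{F}) = \righthomvec(H_k,\class{F})$, forcing $\hom(G_k,F) = \hom(H_k,F)$ for \emph{every} $F\in\class{F}$. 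Taking $F = F_0$ gives $\hom(G_k,F_0) = \hom(H_k,F_0)$, but the left side is positive and the right side is zero — contradiction. One technical point to handle carefully is that $\hom(G_k,F_0)>0$ is the condition ``$G_k$ retracts/maps homomorphically to $F_0$'', which is exactly the complement of the $F_0$-coloring problem being undefinable; I would cite the definable dichotomy theorem (to be proved later in the paper) in the form: for non-bipartite $H$, $\{G : G \to H\}$ is not $\Clogic^{\omega}_{\infty\omega}$-definable, and then instantiate it at the right level $k$ to extract the separating pair $G_k, H_k$.

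The main obstacle is making sure the separating pair $G_k, H_k$ can be chosen so that $G_k \to F_0$ while $H_k \not\to F_0$ \emph{and} $G_k \equiv^k_\Clogic H_k$ simultaneously — this is precisely the content of the $\Clogic^k$-undefinability of $F_0$-coloring, so the real work is deferred to the definable dichotomy theorem, but I should state clearly why undefinability in the infinitary logic $\Clogic^{\omega}_{\infty\omega}$ yields undefinability at each finite level $k$ with a genuinely matching pair: if $\{G : G\to F_0\}$ were a union of $\equiv^k_\Clogic$-classes, it would be $\Clogic^k$-definable on each finite cardinality and, because $\equiv^k_\Clogic$ has only finitely many classes among graphs of each size and $\equiv^k_\Clogic$ is itself $\Clogic^{k}$-definable ``locally'', one can assemble a $\Clogic^{\omega}_{\infty\omega}$ formula (an infinite disjunction over sizes of finite $\Clogic^k$ formulas) defining it — contradicting the dichotomy theorem. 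I would phrase this implication as a short lemma. Finally, I would note that Theorem~\ref{thm:Ck-equivalence} follows by taking $\equiv$ to be $\equiv^k_\Clogic$ itself (which is trivially both finer than and coarser than itself, and finer than $\equiv^1_\Clogic$ since more variables give at least as much distinguishing power), and that co-spectrality will follow similarly once one observes co-spectrality is finer than $\equiv^1_\Clogic$ (same number of vertices) and coarser than $\equiv^3_\Clogic$ or the appropriate level.
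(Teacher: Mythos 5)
Your Case~2 is essentially the paper's argument: take a non-$2$-colorable $F_0 \in \class{F}$, invoke the definable $H$-coloring dichotomy to get that $\{G : G \to F_0\}$ is not $\Clogic^{\omega}_{\infty\omega}$-definable, pass to non-closure under $\equiv^k_\Clogic$ (the paper cites Corollary~2.4 of Otto's monograph for exactly the ``assemble an infinite disjunction of finite $\Clogic^k$-sentences'' step you sketch as a lemma), extract a pair $G_0 \equiv^k_\Clogic G_1$ with $\hom(G_0,F_0)=0 \neq \hom(G_1,F_0)$, and use coarseness of $\equiv$ to get $G_0 \equiv G_1$ while the right-homomorphism vectors differ. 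That part is sound.

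Your Case~1, however, has the finer/coarser implications running the wrong way. You propose two non-$2$-colorable graphs \emph{with the same number of vertices} that are not $\equiv^k_\Clogic$-equivalent, and claim this contradicts ``that $\equiv$ refines $\equiv^k_\Clogic$.'' But the hypothesis is the opposite: $\equiv$ is \emph{coarser} than $\equiv^k_\Clogic$, i.e.\ $G \equiv^k_\Clogic H$ implies $G \equiv H$; knowing $G \not\equiv^k_\Clogic H$ tells you nothing about whether $G \equiv H$. Indeed your argument fails outright for the admissible instance $\equiv\; =\; \equiv^1_\Clogic$: two non-$2$-colorable graphs of the same size are then $\equiv$-equivalent, and no contradiction arises from them. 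The only hypothesis that lets you certify $G \not\equiv H$ is that $\equiv$ is \emph{finer} than $\equiv^1_\Clogic$, so you must take two non-$2$-colorable graphs with \emph{different} numbers of vertices — e.g.\ $K_3$ and $K_4$, as the paper does: both have identically zero right-homomorphism vectors against a bipartite $\class{F}$, yet $K_3 \not\equiv^1_\Clogic K_4$ and hence $K_3 \not\equiv K_4$. Your insistence on equal vertex counts removes exactly the lever you need. The fix is one line, but as written Case~1 does not go through.
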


To prove this we need to bring in some tools from the theory of
constraint satisfaction. We focus on the special case of this
theory that applies to graphs since this is all we need.

Fix a graph~$H$. A graph~$G$ is called~$H$-\emph{colorable} if there is a
homomorphism from~$G$ to~$H$. The~$H$-coloring problem asks:  given a
graph  as input,  is it~$H$-colorable? The name of the problem
reflects the fact that the~$\clique[k]$-coloring problem is the same
as that of deciding whether a given graph as input has a proper
coloring with~$k$ colors. The celebrated $H$-Coloring Dichotomy Theorem,
due to Hell and Ne\v{s}et\v{r}il \cite{HellNesetril1990}, asserts that
the~$H$-coloring problem is NP-complete if~$H$ is non-$2$-colorable, and
solvable in polynomial time if $H$ is $2$-colorable. Here, we will use what we call the
\emph{Definable $H$-Coloring Dichotomy Theorem}. The logics that are
mentioned in its statement are defined immediately following it.

\begin{theorem} \label{thm:definableHcoloring} Let~$H$ be a graph. The
  class of~$H$-colorable graphs is not definable in the
  logic~$\Clogic^{\omega}_{\infty\omega}$ unless~$H$ is 2-colorable, in
  which case it is definable by the negation of a~$3$-Datalog
  sentence.
\end{theorem}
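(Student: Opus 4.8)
The plan is to prove the two halves of the dichotomy separately. \emph{The $2$-colorable case.} Suppose $H$ is bipartite. If $H$ has no edge, then a graph $G$ maps to $H$ exactly when $G$ has no edge, which is the negation of the two-variable Datalog sentence that derives a nullary goal from $E(x,y)$; so assume $H$ has an edge $\{a,b\}$. Since $H$ is bipartite there is a homomorphism $H\to\clique[2]$, and $1\mapsto a$, $2\mapsto b$ embeds $\clique[2]$ into $H$; composing on either side gives $G\to H$ if and only if $G\to\clique[2]$, i.e.\ if and only if $G$ has no odd closed walk. I would then verify that the $3$-Datalog program with binary intensional predicates $O$ and $Z$, rules $O(x,y)\leftarrow E(x,y)$, $Z(x,y)\leftarrow O(x,z)\wedge E(z,y)$, $O(x,y)\leftarrow Z(x,z)\wedge E(z,y)$, and goal rule $\mathrm{Goal}\leftarrow O(x,x)$ accepts exactly the graphs containing an odd closed walk (here $O$ computes odd-length walks and $Z$ even-length walks of length at least two), and that no rule uses more than three variables. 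The class of $H$-colorable graphs is then defined by the negation of this $3$-Datalog sentence.

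\emph{The non-$2$-colorable case.} Passing to the core of $H$ changes neither the class of $H$-colorable graphs nor non-bipartiteness, so I may assume $H$ is a loopless non-bipartite core. The first step is a standard normal form: a class $\mathcal{C}$ of finite graphs is definable in $\Clogic^{\omega}_{\infty\omega}$ if and only if for some $k$ it is a union of $\equiv^{k}_{\Clogic}$-equivalence classes---one direction because the $\equiv^{k}_{\Clogic}$-type of a finite graph is captured by a single finitary $\Clogic^{k}$-sentence, the other because every formula of $\Clogic^{k}_{\infty\omega}$ is invariant under $\equiv^{k}_{\Clogic}$ on finite structures, both via the $k$-pebble bijective game. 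Thus it suffices to show: for every $k$ there are graphs $G$ and $G'$ with $G\equiv^{k}_{\Clogic}G'$, $G\to H$, and $G'\not\to H$. To produce these I would use the machinery surrounding the Cai--F\"urer--Immerman construction \cite{DBLP:journals/combinatorica/CaiFI92}. Since $H$ is a loopless non-bipartite core, $\mathrm{CSP}(H)$ is not definable in Datalog, equivalently does not have bounded width; by the algebraic theory of constraint satisfaction (Barto--Kozik) $H$ then primitive-positively constructs the template $\mathbf{E}_{p}$ of systems of linear equations over $\mathbb{Z}_{p}$ for some prime $p$. Primitive-positive constructions are assembled from primitive-positive interpretations and powers, which are first-order operations, so there is a first-order interpretation $I$ with $\mathbf{S}$ solvable if and only if $I(\mathbf{S})\to H$ and, because first-order interpretations raise the number of variables needed to separate structures in counting logic only boundedly, with $\mathbf{S}\equiv^{g(k)}_{\Clogic}\mathbf{S}'$ implying $I(\mathbf{S})\equiv^{k}_{\Clogic}I(\mathbf{S}')$ for a suitable $g$. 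Since solvability of systems of linear equations over $\mathbb{Z}_{p}$ is not definable in $\Clogic^{\omega}_{\infty\omega}$ (Cai--F\"urer--Immerman, in the form of Atserias--Bulatov--Dawar), a putative $\Clogic^{\omega}_{\infty\omega}$-definition of $H$-colorability would pull back along $I$ to one of $\mathbb{Z}_{p}$-solvability, a contradiction; hence the class of $H$-colorable graphs is not $\Clogic^{\omega}_{\infty\omega}$-definable.

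\emph{The main obstacle.} The $2$-colorable case and the reduction of the other case to producing $\equiv^{k}_{\Clogic}$-equivalent pairs that differ in $H$-colorability are routine. The hard part is obtaining those pairs \emph{uniformly for every} non-bipartite $H$: routing through ``not bounded width implies a primitive-positive construction of $\mathbb{Z}_{p}$-linear equations'' keeps the argument uniform in $H$ but leans on substantial structural CSP theory, while the more hands-on alternative---a Cai--F\"urer--Immerman-style gadget construction built around an odd cycle of $H$---requires checking both that the resulting graphs stay $\equiv^{k}_{\Clogic}$-equivalent and that the construction really lifts from $H=\clique[3]$ (ordinary $3$-colorability) to an arbitrary non-bipartite target. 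Either way, this step carries the technical weight of the theorem.
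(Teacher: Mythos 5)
Your proposal is correct in substance and shares the paper's overall skeleton --- handle the bipartite case by an explicit small-variable Datalog program for odd closed walks, and handle the non-bipartite case by a definability-preserving reduction from a CSP already known to be hard for $\Clogic^{\omega}_{\infty\omega}$ --- but the two arguments route the non-bipartite case through different intermediate targets. The paper invokes Bulatov's algebraic form of the Hell--Ne\v{s}et\v{r}il theorem in a very concrete shape: for a non-2-colorable core $H$ there are a pp-definable subset $S$ and a pp-definable equivalence relation $E$ on $S$ with $(H_S)/E \isom \clique[3]$; it then chains the four Datalog-reducibility lemmas of Atserias--Bulatov--Dawar to get $\mathrm{CSP}(\clique[3]) \leq_{\mathrm{d}} \mathrm{CSP}(H)$ and finishes with Dawar's theorem that $3$-colorability is not $\Clogic^{\omega}_{\infty\omega}$-definable. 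You instead go through the bounded-width theory: non-bipartite cores pp-construct the linear-equation template $\mathbf{E}_p$, and $\mathbb{Z}_p$-solvability is not $\Clogic^{\omega}_{\infty\omega}$-definable. Both roads rest on the same algebraic fact (non-bipartite cores are Taylor-less and hence pp-construct everything), but the paper's choice is leaner: Bulatov's Theorem~1 hands over exactly the pp-definitions needed, and the ABD lemmas are stated precisely for the singleton-expansion/induced-substructure/quotient steps, so no appeal to Barto--Kozik is needed. Your route buys nothing extra here and imports strictly heavier machinery.

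One point you should tighten: you open the non-bipartite case with ``$\mathrm{CSP}(H)$ is not definable in Datalog, equivalently does not have bounded width.'' As a premise this is not available --- Datalog-undefinability is (a weakening of) the very conclusion you are proving, and deriving ``not bounded width'' from NP-hardness would require $\mathrm{P}\neq\mathrm{NP}$. The correct unconditional starting point is the algebraic dichotomy itself (Bulatov, or Barto--Kozik--Niven, or Siggers): a loopless non-bipartite core admits no Taylor polymorphism and therefore pp-constructs all finite structures, in particular $\mathbf{E}_p$. With that substitution your argument is sound. Your $3$-variable program for odd closed walks in the bipartite case is exactly the ``simple optimization'' of the four-variable Kolaitis--Vardi program that the paper alludes to, and is correct as written.
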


The logic~$\Clogic^{\omega}_{\infty\omega}$ denotes the union of the
logics~$\Clogic^k_{\infty\omega}$ as~$k$ ranges over the positive
integers, where $\Clogic^k_{\infty\omega}$ is the logic that is obtained
from~$\Clogic^k$ by augmenting the syntax with infinitary disjunctions
and conjunctions.  It is known that~$\Clogic^k_{\infty\omega}$, and
even its counting-free, existential, positive
fragment~$\exists\mathrm{L}^k_{\infty\omega}$, is at least as
expressive as the~$k$-variable fragment~$k$-Datalog of Datalog; see
Theorem~4.1 in~\cite{KolaitisVardi2000}.

We claim that Theorem~\ref{thm:definableHcoloring} is implicit in the
literature and follows by combining known results. Concretely,
Theorem~\ref{thm:definableHcoloring} can be seen to follow by
combining the implication~(c) to~(b) of Theorem~1
in~\cite{Bulatov2005} with Corollary~23
in~\cite{AtseriasBulatovDawar2009}. Unfortunately, showing that the
output of Theorem~1 in the first of these references can serve as
input for Corollary~23 in the second reference is not straightforward
because it would require us to introduce the notions of Tame
Congruence Theory that are used in the statement of Corollary~23
in~\cite{AtseriasBulatovDawar2009}. For this reason, and also because
we were not able to find a concrete reference
where~Theorem~\ref{thm:definableHcoloring} is stated and proved, we
provide some details here.

The part of Theorem~\ref{thm:definableHcoloring} that states that the
class of~$H$-colorable graphs is definable by the negation of
a~$3$-Datalog sentence when~$H$ is 2-colorable is well-known: If~$H$ is an independent set, then the class of~$H$-colorable graphs coincides with class~$\classofindeps$ of independent sets,
which is clearly definable as the negation of a
Datalog program even with two variables. If~$H$ has at least one edge
but is 2-colorable, then the class of~$H$-colorable graphs is precisely
the class of 2-colorable graphs, which is one of the canonical examples
of definability in (the negation of) Datalog by stating that the graph
contains an odd cycle. A Datalog program for this that uses four
variables was given in Section~4.1 of~\cite{KolaitisVardi2000}, but a
simple optimization shows that three variables are enough. It is also
easy to see that three variables are necessary.

Next, we focus on the non-2-colorable case of
Theorem~\ref{thm:definableHcoloring}. The development of the theory is
facilitated by the use of finite relational structures of richer
vocabularies. Graphs are seen as finite structures of a vocabulary
that has a single binary relation symbol whose interpretation is
symmetric and irreflexive.  Let~$A$ be a finite relational structure
with domain~$D(A)$.  If~$S \subseteq D(A)$ is a subset of the domain
of~$A$, then we write~$A_S$ for the substructure of~$A$ induced
by~$S$; i.e., the domain of~$A_S$ is~$S$, and its relations are the
sets of tuples of elements in~$S$ that are tuples in the corresponding
relation in~$A$.  If~$E \subseteq D(A) \times D(A)$ is an equivalence
relation on the domain of~$A$, then we write~$A/E$ for the quotient
structure of~$A$; i.e., the domain of~$A/E$ is the set
of~$E$-equivalence classes~$[a]$ of elements~$a$ in the domain of~$A$,
and its relations are the sets of tuples of equivalence
classes~$([a_1],\ldots,[a_r])$ such that~$(a_1,\ldots,a_r)$ is a tuple
in the corresponding relation in~$A$. We write~$A^c$ for the
singleton-expansion of~$A$; i.e., the expansion of~$A$ with a new
unary relation symbol~$P_a$ interpreted by the singleton set~$\{a\}$
for each element~$a$ in the domain of~$A$. A \emph{primitive positive} formula, or pp-formula, is a
first-order formula of the
form~$\exists y_1 \cdots \exists y_s
\psi(x_1,\ldots,x_r,y_1,\ldots,y_s)$, where~$\psi$ is a conjunction of
atomic formulas on the variables~$x_1,\ldots,x_r$
and~$y_1,\ldots,y_s$; among the atomic formulas we include equalities
between variables. Let~$R$ be a relation on~$D(A)$ of arity~$r$.  We say
that a~$A$ \emph{pp-defines}~$R$ if there exists a
pp-formula~$\varphi(x_1,\ldots,x_r)$ such
that~$R = \{(a_1,\ldots,a_r) \in D(A)^r : A \models
\varphi(x_1/a_1,\ldots,x_r/a_r) \}$. We also say that~$R$ is \emph{pp-definable}
in~$A$. The following is the algebraic version of the $H$-Coloring
Dichotomy Theorem due to Bulatov; see the implication~(c) to~(b) in
Theorem~1 in~\cite{Bulatov2005}.

\begin{theorem} \cite{Bulatov2005} \label{thm:hellnesetrilbulatov}
  If~$H$ is a non-2-colorable core graph, then there exists a subset~$S$
  of vertices of~$H$ and an equivalence
  relation~$E \subseteq S \times S$ on~$S$ such that the following
  three statements hold: (1)~$H^c$ pp-defines~$S$, (2)~$H_S^c$
  pp-defines~$E$, and (3)~$H_S/E$ is isomorphic to~$K_3$.
\end{theorem}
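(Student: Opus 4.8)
The plan is to pass, via the Galois connection between primitive positive definability and polymorphisms, from the three defining conditions to a single algebraic statement about the polymorphisms of~$H$, and then to extract the witnesses~$S$ and~$E$ from the structure theory of the associated idempotent algebra.

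First I would invoke the finite form of the Inv--Pol Galois connection (the theorem of Geiger and of Bodnarchuk, Kaluznin, Kotov and Romov): for a finite structure~$A$, a relation is pp-definable in~$A$ if and only if it is invariant under every polymorphism of~$A$. Applying this to the singleton-expansion~$H^c$, whose polymorphisms are exactly the \emph{idempotent} polymorphisms of~$H$ (preserving every singleton~$\{a\}$ is the same as idempotency), condition~(1) becomes the demand for a subset~$S \subseteq \vertex(H)$ invariant under every idempotent polymorphism of~$H$, and condition~(2) becomes the demand for an equivalence relation~$E$ on~$S$ invariant under every idempotent polymorphism of the induced graph~$H_S$. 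Together with~(3), the three conditions assert precisely that~$H^c$ admits a \emph{one-dimensional} pp-interpretation of the template~$K_3$ (the disequality relation on three elements). Thus the theorem reduces to the statement that every loopless, non-$2$-colorable core graph one-dimensionally pp-interprets~$K_3$.

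The engine is universal algebra. Let~$\mathbb{A}$ be the idempotent algebra whose universe is~$\vertex(H)$ and whose operations are all idempotent polymorphisms of~$H$. The first substantive step is that~$\mathbb{A}$ has no Taylor term. This is where non-$2$-colorability enters together with looplessness: it is exactly the content of the Loop Lemma of Barto, Kozik and Niven, which asserts that a loopless non-bipartite graph admits no weak near-unanimity, and hence no Taylor, polymorphism. (Bulatov's original argument instead derives the same conclusion directly from the hardness half of the Hell--Ne\v{s}et\v{r}il theorem.) By Taylor's theorem and Tame Congruence Theory, the absence of a Taylor term forces type~$1$ (the unary type) to occur in the variety generated by~$\mathbb{A}$: there is a prime congruence quotient and an associated minimal set whose trace is, up to polynomial equivalence, a set carrying no operations beyond projections. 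I would take~$S$ to be the universe of such a minimal set, realized as the range of an idempotent unary polymorphism so that it is automatically invariant and hence pp-definable, and take~$E$ to be the equivalence collapsing the trace to its projection-only quotient, written as an explicit pp-formula over~$H_S^c$ so that its invariance under the idempotent polymorphisms of~$H_S$ is guaranteed by construction.

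The main obstacle is the final identification of the projection-only quotient with the graph~$K_3$, that is, showing it has exactly three vertices and that the edge relation of~$H_S$ descends to the complete loopless graph on them; this cannot be read off from the abstract type-$1$ data and is the technical heart of the argument. I would argue that the quotient must be loopless, since a block of~$E$ carrying an internal edge would create a loop and a looped quotient is trivially $2$-colorable, contradicting projection-onlyness; that it cannot collapse too far, since an odd cycle of~$H$ maps to a closed odd walk in~$(H_S)/E$, so the quotient graph still contains an odd cycle and in particular has at least three vertices and a genuine edge; and that it cannot be merely an edge or an edgeless point, since~$K_1$ and~$K_2$ both carry nontrivial idempotent polymorphisms (a Mal'cev operation~$x \oplus y \oplus z$ on~$K_2$, for instance) and so are not projection-only. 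Combining these with the minimality of the chosen trace is what pins the quotient to exactly three mutually adjacent, loop-free classes, yielding~$(H_S)/E \cong K_3$. Turning each of these assertions into a rigorous argument, and simultaneously verifying that the defining formula for~$E$ is genuinely primitive positive over~$H_S^c$ rather than merely an~$\mathbb{A}$-congruence, is precisely the part that rests on the detailed classification of small idempotent algebras and of graph polymorphisms rather than on formal manipulation.
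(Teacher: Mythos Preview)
The paper does not prove this theorem; it is quoted as the implication (c)$\Rightarrow$(b) of Theorem~1 in~\cite{Bulatov2005} and used as a black box in the derivation of Theorem~\ref{thm:definableHcoloring}. There is thus no in-paper argument to compare your outline against.

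On the merits of the outline: the architecture---pass to the idempotent polymorphism algebra, argue it admits no Taylor term, locate type-$1$ behaviour via Tame Congruence Theory, and extract a $K_3$ quotient---is reasonable, and you are candid that identifying the quotient with $K_3$ is the crux. There is, however, a concrete gap earlier that you gloss over. You write that the minimal set $S$, being the range of an idempotent unary \emph{polynomial} of $\mathbb{A}=(\vertex(H),\mathrm{Pol}(H^c))$, is ``automatically invariant and hence pp-definable''. This does not follow. Polynomials of $\mathbb{A}$ are term operations with constants substituted in, and the fixed-point set of such a polynomial need not be closed under the term operations of $\mathbb{A}$: already in small idempotent algebras one can arrange an idempotent unary polynomial whose range is not a subuniverse. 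By the Galois connection, closure under all of $\mathrm{Pol}(H^c)$ is exactly what pp-definability in $H^c$ requires. (Retreating to unary \emph{term} operations does not help either: those are the idempotent endomorphisms of $H$, and for a core the only one is the identity.) Converting a type-$1$ minimal set into a genuinely pp-definable $S$ that still carries the needed trace structure is substantive work, and the same issue recurs for $E$: TCT hands you a congruence of the induced polynomial algebra on the minimal set, whereas what you must exhibit is a relation invariant under the idempotent polymorphisms of the graph $H_S$. Bulatov's argument confronts these points through a hands-on analysis of graph polymorphisms rather than by quoting abstract TCT, which is why the paper cites it instead of reproving it.
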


A graph $G$ is a \emph{core} if $G$ does not have a retraction to a
proper subgraph $H$, i.e., a homomorphism from $V(G)$ to $V(H)$ that
is the identity on $V(H)$ (see \cite{DBLP:books/daglib/0013017} for
detailed discussion of retractions and cores).  Also, to clarify, the
statement~(c) in Theorem~1 in~\cite{Bulatov2005} says that the
equivalence relation~$E$ is pp-definable in~$H^c$, but the proof
actually shows that it is pp-definable in~$H_S^c$. This is, in
principle, a stronger statement, and it is what we actually need
below. We note that the implications~(a) to~(b) and~(a) to~(c) in
Theorem~1 in~\cite{Bulatov2005} tacitly assume that P~$\not=$ NP, but
we do not use that part of the theorem.

The next ingredient in the proof is the concept of logical
reducibility between classes of finite structures. We refer the reader
to Definition~1 in~\cite{AtseriasBulatovDawar2009} for the definition
of Datalog-reducibility, denoted by~$\leq_{\mathrm{datalog}}$ there
and by~$\leq_{\mathrm{d}}$ here, for brevity. We need only one
property about~$\leq_{\mathrm{d}}$, namely, that if~$\class{B}$ is a
class of finite structures that
is~$\Clogic^\omega_{\infty\omega}$-definable and~$\class{A}$ is
another class of finite structures such
that~$\class{A} \leq_{\mathrm{d}} \class{B}$, then also~$\class{A}$
is~$\Clogic^\omega_{\infty\omega}$-definable. This follows from the
aforementioned fact that Datalog is a fragment
of~$\exists\mathrm{L}^\omega_{\infty\omega} \subseteq
\Clogic^\omega_{\infty\omega}$. For a finite structure~$A$,
let~$\mathrm{CSP}(A)$ denote the class of structures of the same
vocabulary as~$A$ that have a homomorphism to~$A$. The following was
proved in \cite{BulatovJeavonsKrokhin2005} for polynomial-time
reducibility, and in~\cite{AtseriasBulatovDawar2009}
for~$\leq_{\mathrm{d}}$ reducibility.

\begin{theorem}
  \cite{AtseriasBulatovDawar2009} \label{thm:reducibilities} Let~$A$
  be a finite relational structure with at least two elements,
  let~$S \subseteq D(A)$ be a subset of the domain of~$A$, and
  let~$E \subseteq D(A) \times D(A)$ be an equivalence relation on the
  domain of~$A$. The following hold:
  \begin{enumerate}[(1)] \itemsep=0pt
  \item $\mathrm{CSP}(A) \leq_{\mathrm{d}} \mathrm{CSP}(A^c)$.
  \item If $A$ pp-defines $E$, then
    $\mathrm{CSP}(A/E) \leq_{\mathrm{d}} \mathrm{CSP}(A)$.
  \item If~$A$ pp-defines~$S$,
    then~$\mathrm{CSP}(A_S) \leq_{\mathrm{d}} \mathrm{CSP}(A)$.
  \item If $A$ is a core, then
    $\mathrm{CSP}(A^c) \leq_{\mathrm{d}} \mathrm{CSP}(A)$.
  \end{enumerate}
\end{theorem}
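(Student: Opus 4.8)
The plan is to realize each of the four reductions by an explicit, uniform gadget-replacement transformation and then check that each such transformation qualifies as a $\leq_{\mathrm{d}}$-reduction in the sense of Definition~1 of~\cite{AtseriasBulatovDawar2009}. Correctness of the constructions in the ordinary many-one sense is classical --- these are exactly the CSP-preserving operations analysed in~\cite{BulatovJeavonsKrokhin2005} (adding constants, quotienting by a pp-definable congruence, passing to a pp-definable induced substructure, and naming constants in a core) --- so the genuinely new content is the $\leq_{\mathrm{d}}$-compliance. The recurring tool is the canonical-database lemma for pp-formulas: if $\varphi(x_1,\dots,x_r)=\exists\vec y\,\psi$ is a pp-formula and $D_\varphi$ is the structure whose elements are the variables of $\varphi$ and whose tuples are the conjuncts of $\psi$, then for every structure $M$ and all $m_1,\dots,m_r\in D(M)$ there is a homomorphism $D_\varphi\to M$ sending $x_i\mapsto m_i$ if and only if $M\models\varphi(m_1,\dots,m_r)$; since pp-formulas are also preserved under homomorphisms, gadget reductions built from canonical databases have the required two-sided correctness.

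I would dispatch (1), (2) and (3) together. For (1), given an instance $X$ of $\mathrm{CSP}(A)$, take $Y$ to be $X$ viewed over the vocabulary of $A^c$ with each new unary symbol $P_a$ interpreted as $\emptyset$; the $P_a$-constraints are then vacuous, homomorphisms $X\to A$ and $Y\to A^c$ coincide, and this ``do-nothing'' transformation is plainly admissible. For (2) and (3), observe that $A/E$ (when $A$ pp-defines $E$ by $\varphi_E$) and $A_S$ (when $A$ pp-defines $S$ by $\sigma$) are one-dimensional pp-interpretations of $A$: for an $r$-ary symbol $R$, the relation $R^{A/E}$ is pp-defined in $A$ by $\exists\vec y\,(\bigwedge_{i=1}^r\varphi_E(x_i,y_i)\wedge R(\vec y))$, while $R^{A_S}$ is pp-defined by $R(\vec x)\wedge\bigwedge_{i=1}^r\sigma(x_i)$ with the domain carved out by $\sigma$. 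Then I would use the standard gadget reduction attached to a pp-interpretation: from an instance $X$ build $Y$ over the vocabulary of $A$ by keeping the elements of $X$, attaching to each element the canonical database of the domain formula, and replacing each tuple of each relation of $X$ by the canonical database of the corresponding defining pp-formula, all glued along the distinguished variables; the canonical-database lemma then gives $Y\in\mathrm{CSP}(A)$ iff $X$ lies in the interpreted CSP, and since only fixed finite gadgets are attached uniformly to tuples, this is a $\leq_{\mathrm{d}}$-reduction.

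The hard part, I expect, is (4). Given an instance $X$ of $\mathrm{CSP}(A^c)$, I would form $Y$ as the disjoint union of the reduct of $X$ to the vocabulary of $A$ with a fresh isomorphic copy $\widehat A$ of $A$, with vertices $\widehat a$ for $a\in D(A)$, and then identify $x$ with $\widehat a$ whenever $x\in P_a^X$. If $h:X\to A^c$ then $g$ defined by $g(x)=h(x)$ and $g(\widehat a)=a$ is well defined on $Y$ and is a homomorphism $Y\to A$. Conversely, composing a homomorphism $g:Y\to A$ with the quotient map $\widehat A\to Y$ yields an endomorphism of $A$; since $A$ is a core this endomorphism is a bijection $\tau$, which in particular rules out any collapse among the $\widehat a$'s (a collapse would make it non-injective), so $\widehat A$ embeds into $Y$ and $\tau^{-1}\circ g|_{D(X)}$ is a homomorphism $X\to A^c$, because $x\in P_a^X$ forces $g(x)=g(\widehat a)=\tau(a)$. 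The crux is to certify that this construction --- adjoining a fixed copy of $A$ together with equality constraints tying each $x\in P_a^X$ to the matching vertex of that copy --- conforms to Definition~1 of~\cite{AtseriasBulatovDawar2009}; since that definition allows equality atoms inside gadgets and is designed precisely so as to accommodate the classical ``constants for a core'' reduction, I expect this to go through, but it is the point at which one must be most careful about exactly what a $\leq_{\mathrm{d}}$-reduction is permitted to do. The hypothesis that $A$ has at least two elements enters only to discard degenerate structures in this last step.
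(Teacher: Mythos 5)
First, note that the paper does not actually prove this theorem: it only records that (1) is a special case of Lemma~11, that (2) and (3) are special cases of Theorem~18, and that (4) follows from the second part of Lemma~19 in~\cite{AtseriasBulatovDawar2009}. Your proposal reconstructs from scratch the constructions underlying those citations, and the constructions themselves are the right ones: the vacuous interpretation of the $P_a$'s for (1); the canonical-database gadgets attached to a one-dimensional pp-interpretation for (2) and (3) (your pp-definitions of $R^{A/E}$ and $R^{A_S}$ are correct, and the canonical-database lemma gives two-sided correctness); and, for (4), adjoining a fresh copy $\widehat A$ of $A$ and identifying each $x \in P_a^X$ with $\widehat a$, using that every endomorphism of a finite core is an automorphism $\tau$ so that no collapse among the $\widehat a$'s can occur when $Y \to A$ and $\tau^{-1}$ restores the constants. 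As ordinary many-one reductions all four are correct.

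The genuine gap is the one you flag yourself and then wave past. Everything the paper uses from this theorem is carried by the subscript $\mathrm{d}$: the sole property invoked is that $\leq_{\mathrm{d}}$ preserves $\Clogic^{\omega}_{\infty\omega}$-definability downwards, and that holds only because Definition~1 of~\cite{AtseriasBulatovDawar2009} constrains what a reduction is allowed to do (its effect on instances must itself be logically expressible so that a definition of the target class can be pulled back). Your argument establishes correctness of the instance transformations but defers admissibility to ``I expect this to go through.'' For (1)--(3) this is essentially routine, since they are uniform local gadget replacements; but (4) --- disjoint union with a fixed template followed by a quotient driven by the $P_a$'s --- is exactly the step for which Lemma~19 of~\cite{AtseriasBulatovDawar2009} is needed, and until that admissibility is checked the theorem \emph{as stated}, with $\leq_{\mathrm{d}}$ rather than polynomial-time reducibility, is not proved. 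Relatedly, your guess that the two-element hypothesis ``enters only to discard degenerate structures'' in (4) is unverified; it is inherited from the cited statements and belongs to the definition of $\leq_{\mathrm{d}}$ (logical reductions to a trivial $\mathrm{CSP}$ are problematic) rather than to your homomorphism arguments. So: right constructions, but the part of the statement that the paper actually relies on is precisely the part your proposal leaves to a citation-shaped hope.
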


\noindent In the statement of part~(4), the core of a relational
structure is the straightforward generalization of the core of a graph
to structures of arbitrary relational vocabularies. To see how
Theorem~\ref{thm:reducibilities} follows from the statements
in~\cite{AtseriasBulatovDawar2009}, note that~(1) follows from a
special case of Lemma~11 there, that~(2) and~(3) are special cases of
Theorem~18 there, and that~(4) follows from the second part of
Lemma~19 there.

Theorems~\ref{thm:reducibilities} and~\ref{thm:hellnesetrilbulatov} imply 
 that if~$H$ is non-2-colorable,
then~$\mathrm{CSP}(K_3) \leq_{\mathrm{d}} \mathrm{CSP}(H'_S/E)
\leq_{\mathrm{d}} \mathrm{CSP}((H'_S/E)^c) \leq_{\mathrm{d}}
\mathrm{CSP}(H'^c_S/E) \leq_{\mathrm{d}} \mathrm{CSP}(H'^c_S)
\leq_{\mathrm{d}} \mathrm{CSP}(H'^c) \leq_{\mathrm{d}}
\mathrm{CSP}(H') \leq_{\mathrm{d}} \mathrm{CSP}(H)$, where~$H'$ is the
core of~$H$. Step~3 is not from Theorem~\ref{thm:reducibilities}; it
holds because~$H'^c_S/E$ is~$(H'_S/E)^c$ with some additional
(singleton) relations. The last step holds because~$H$ and~$H'$ are
homomorphically equivalent. The final link that yields the proof of
Theorem~\ref{thm:definableHcoloring} is the following well-known
result of Dawar, which was obtained by adapting the main result
in~\cite{DBLP:journals/combinatorica/CaiFI92}; see Theorem~4.11 and
Remark~4.12 in~\cite{Dawar1998}.

\begin{theorem} \cite{Dawar1998} The class of~$3$-colorable graphs is
  not~$\Clogic^\omega_{\infty\omega}$-definable.
\end{theorem}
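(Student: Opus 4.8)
The plan is to reduce non-definability in $\Clogic^\omega_{\infty\omega}$ to the problem of defeating each finite level $\Clogic^k_{\infty\omega}$ separately, and then to supply witnesses via the Cai--F\"urer--Immerman construction. First I would observe that a $\Clogic^\omega_{\infty\omega}$-definable class is already definable in $\Clogic^k_{\infty\omega}$ for some fixed $k$, since every sentence uses only finitely many variables. On finite structures, $\Clogic^k_{\infty\omega}$-equivalence coincides with $\equiv^k_\Clogic$-equivalence, as both are characterized by Duplicator winning the bijective $k$-pebble game (see \cite{DBLP:books/cu/O2017}); consequently any $\Clogic^k_{\infty\omega}$-definable class is a union of $\equiv^k_\Clogic$-equivalence classes. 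Hence, to prove the theorem it suffices to exhibit, for every $k \geq 1$, a pair of graphs $G_k$ and $H_k$ with $G_k \equiv^k_\Clogic H_k$ such that $G_k$ is $3$-colorable while $H_k$ is not. Note that this must be done for \emph{every} $k$, and it is the large values of $k$---where $\equiv^k_\Clogic$ is very fine---that constitute the real content.

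The witnesses come from the Cai--F\"urer--Immerman gadget construction \cite{DBLP:journals/combinatorica/CaiFI92}, adapted so that the global invariant distinguishing the two structures is precisely an obstruction to $3$-colorability. I would start from a connected base graph $J$ of treewidth larger than $k$ (for instance a suitable $3$-regular expander), and replace each vertex of $J$ by a gadget and each edge by a pair of connector vertices carrying a binary state. As in the standard construction, one obtains two graphs that differ only by a single ``twist'' of one edge's connectors; these two graphs are non-isomorphic because they carry different values of a global $\mathbb{Z}_2$ parity, yet they are $\equiv^k_\Clogic$-equivalent. The equivalence is certified by a bijective $k$-pebble game in which Duplicator exploits the internal symmetry of the gadgets together with the high connectivity of $J$: with only $k$ pebbles available, Spoiler can never confine the twist to a bounded region, so Duplicator can always ``slide'' it away along $J$ and maintain a winning position.

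The adaptation due to Dawar is to choose the gadgets, as ordinary graphs, so that the $\mathbb{Z}_2$ parity is exactly the difference between $3$-colorability and its failure. Concretely, each gadget is built so that in any proper $3$-coloring the two states of each incident connector are forced to a binary choice, and so that around every cycle of $J$ these choices must multiply to the trivial element of $\mathbb{Z}_2$; a proper $3$-coloring of the whole graph then exists if and only if the connector states can be chosen consistently around all cycles. The untwisted graph $G_k$ admits such a consistent assignment and is therefore $3$-colorable, whereas the twist in $H_k$ introduces a single frustrated constraint---an odd number of sign flips around a cycle---which makes a consistent assignment, and hence a proper $3$-coloring, impossible. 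This is the direct analogue of how a single parity defect turns a bipartite-looking graph into one containing an odd cycle.

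The main obstacle is to meet the two requirements simultaneously. On one hand the gadget must possess enough automorphisms---in particular ones that flip the state of each incident connector in matched pairs---for Duplicator's pebble-game strategy to go through and certify $G_k \equiv^k_\Clogic H_k$; on the other hand the same gadget must rigidly tie the existence of a proper $3$-coloring to the global parity. Verifying that a single concrete gadget design achieves both---that the colorability-encoding edges do not break the symmetries needed for the game, and that the only obstruction to $3$-colorability is indeed the twist, so that the local-to-global parity argument has no spurious extra solutions---is the delicate core of the proof, and is where I would expect to spend the most effort.
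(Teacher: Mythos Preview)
The paper does not actually prove this theorem; it is quoted as a known result from~\cite{Dawar1998}, with only the remark that it ``was obtained by adapting the main result in~\cite{DBLP:journals/combinatorica/CaiFI92}.'' Your outline is consistent with that remark and is a faithful sketch of Dawar's original argument: reduce to showing that for every~$k$ there exist~$\equiv^k_\Clogic$-equivalent graphs only one of which is~$3$-colorable, and supply such pairs via a Cai--F\"urer--Immerman-style gadget construction in which the global~$\mathbb{Z}_2$ parity encodes~$3$-colorability. So there is nothing to compare against in the paper itself; your approach matches the cited source.
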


By the downwards preservation
of~$\Clogic^\omega_{\infty\omega}$-definability
through~$\leq_{\mathrm{d}}$, this completes the proof of
Theorem~\ref{thm:definableHcoloring}. We are now ready to prove
Theorem~\ref{thm:general} and, hence,
Theorem~\ref{thm:Ck-equivalence}.

\begin{proof}[Proof of Theorem~\ref{thm:general}]
  Fix an equivalence relation~$\equiv$ that is finer
  than~$\equiv^1_\Clogic$ and is coarser than~$\equiv^k_\Clogic$ for
  some~$k \geq 2$.  Towards a contradiction, assume that there exists
  a class of graphs~$\class{F}$ such that~$G \equiv H$ holds if and
  only if~$\hom(G,\class{F}) = \hom(H,\class{F})$ holds. We
  distinguish the following two cases:

  \emph{Case 1.}~All graphs in~$\class{F}$ are 2-colorable.  This leads
  immediately to a contradiction. Indeed, the cliques~$K_3$ and~$K_4$
  have different sizes, so they are not~$\Clogic^1$-equivalent, hence
  they are not~$\equiv$-equivalent. But~$\hom(K_3,F)=\hom(K_4,F)=0$ for every
  2-colorable graph~$F$, so~$\hom(K_3,\class{F})=\hom(K_4,\class{F})$.

  \emph{Case 2.}~Some graph in~$\class{F}$ is non-2-colorable.
  Let~$H \in \class{F}$ be such a graph. By
  Theorem~\ref{thm:definableHcoloring}, the class of~$H$-colorable
  graphs is not~$\Clogic^k_{\infty\omega}$-definable. By Corollary~2.4
  in~\cite{DBLP:books/cu/O2017} this means that the class
  of~$H$-colorable graphs is not preserved
  by~$\equiv^k_{\Clogic}$-equivalence, i.e., there exists a pair of
  graphs~$G_0$ and~$G_1$ such that~$G_0 \equiv^k_{\Clogic} G_1$
  with~$\hom(G_0,H) = 0$ and~$\hom(G_1,H) \not= 0$. In
  particular, $G_0 \equiv G_1$ and~$\hom(G_0,H) \not= \hom(G_1,H)$,
  against the fact that~$H$ is in~$\class{F}$ and the assumption
  on~$\class{F}$.
\end{proof}

\subsection{Cospectrality vs.\ Right Profiles}

The \emph{characteristic polynomial} of a graph~$G$ is defined
as~$p(G, x) = \mathrm{det}(x I - A_G)$, where~$A_G$ denotes the
adjacency matrix of~$G$ and~$I$ denotes the identity matrix of the same dimension as~$A_G$.
Here,~$\mathrm{det}$ denotes the determinant of a square matrix.
The zeros of~$p(G, x)$ are the eigenvalues of the adjacency matrix~$A_G$.
Two graphs~$G$ and~$H$ are called \emph{cospectral}
if their characteristic polynomials are the same; equivalently,~$G$
and~$H$ are cospectral if their adjacency matrices~$A_G$ and~$A_H$
have the same multisets of eigenvalues. Dell, Grohe and Rattan proved
the following characterization of cospectral graphs; see Proposition~9
in~\cite{DBLP:conf/icalp/DellGR18}, attributed
to~\cite{VanDamHaemers2003}.

\begin{theorem} \label{thm:grohe-cospectral} \cite{DBLP:conf/icalp/DellGR18} For every two
  graphs~$G$ and~$H$, we have that~$G$ and~$H$ are cospectral if and
  only if~$\hom(\class{C},G) = \hom(\class{C},H)$, where~$\class{C}$
  is the class of all cycles, including the degenerate cycles with one
  and two vertices.
\end{theorem}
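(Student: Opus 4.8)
The plan is to route the whole statement through the classical identity relating cycle homomorphism counts to traces of powers of the adjacency matrix, followed by the standard passage between power sums and elementary symmetric functions. First I would record that for every graph $G$ and every $n \geq 2$, a homomorphism from $\cycle[n]$ into $G$ is the same thing as a closed walk of length $n$ in $G$, so that
$\hom(\cycle[n], G) = \mathrm{tr}(A_G^n) = \sum_i \lambda_i^n$,
where $\lambda_1,\ldots,\lambda_N$ (with $N \defas \card{\vertex(G)}$) is the multiset of eigenvalues of $A_G$; the case $n=2$ is just $\hom(\clique[2], G) = 2\,\card{\edge(G)} = \mathrm{tr}(A_G^2)$. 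For the remaining degenerate cycle I would note separately that $\hom(\cycle[1], G) = \card{\vertex(G)}$, which equals $\mathrm{tr}(A_G^0)$ and not $\mathrm{tr}(A_G)$ (the latter being $0$ for every loopless graph). Thus the family of counts $\hom(\cycle[n], G)$, $n \geq 1$, recovers exactly $\card{\vertex(G)}$ together with all power sums $\sum_i \lambda_i^n$ for $n \geq 2$ (and the power sum for $n=1$ is $0$ regardless).

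For the direction from cospectrality to equal cycle counts, suppose $p(G,x) = p(H,x)$. Then $A_G$ and $A_H$ have the same multiset of eigenvalues, and the common degree of the two characteristic polynomials forces $\card{\vertex(G)} = \card{\vertex(H)}$. Consequently all power sums of the eigenvalue multisets agree, and by the identities of the previous paragraph $\hom(\cycle[n], G) = \hom(\cycle[n], H)$ for every cycle $\cycle[n]$, the degenerate ones included.

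For the converse, assume $\hom(\cycle[n], G) = \hom(\cycle[n], H)$ for all $n \geq 1$. The case $n=1$ gives $\card{\vertex(G)} = \card{\vertex(H)} =: N$, and the cases $2 \leq n \leq N$ give $\sum_i \lambda_i(G)^n = \sum_i \lambda_i(H)^n$; since the first power sums are both $0$, the first $N$ power sums of the two eigenvalue multisets coincide. As both multisets have size $N$, Newton's identities (valid in characteristic $0$) determine the elementary symmetric functions $e_1,\ldots,e_N$ from $p_1,\ldots,p_N$, so the two multisets have the same elementary symmetric functions, whence $p(G,x) = \sum_{k=0}^{N} (-1)^k e_k\, x^{N-k} = p(H,x)$; that is, $G$ and $H$ are cospectral.

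The step that needs genuine care is not any single computation but the bookkeeping around the degenerate cycle $\cycle[1]$: it is precisely what pins down the degree $N$ of the characteristic polynomial, which is exactly the hypothesis that makes Newton's identities applicable. Without $\cycle[1]$ the statement would be false — for instance $G$ and $G \dunion \indep[1]$ agree on $\hom(\cycle[n], \cdot)$ for every $n \geq 2$ yet are not cospectral, their characteristic polynomials differing by the factor $x$. Apart from this point, the argument is just the closed-walk/trace correspondence together with elementary symmetric-function algebra.
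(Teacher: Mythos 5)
Your proof is correct, and the paper itself gives no proof of this statement---it is quoted from Dell, Grohe, and Rattan (there attributed to van Dam and Haemers), so there is nothing in the text to diverge from. Your argument (closed walks give $\hom(\cycle[n],G)=\mathrm{tr}(A_G^{\,n})$ for $n\geq 2$, the degenerate cycle $\cycle[1]$ supplies $\card{\vertex(G)}$ and hence the degree of the characteristic polynomial, and Newton's identities convert equality of the first $N$ power sums into equality of characteristic polynomials) is exactly the standard proof in the cited sources, and your remark that omitting $\cycle[1]$ breaks the statement (e.g.\ $G$ versus $G \dunion \indep[1]$) correctly identifies the one point of genuine care.
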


Here we show that, in contrast, cospectrality cannot be characterized
as the restriction of a right profile. Interestingly, as
we will see, the proof of this result is an application of
Theorem~\ref{thm:general} with~$k = 3$.

\begin{theorem}
  There is no class~$\class{F}$ of graphs such that for every two
  graphs~$G$ and~$H$, we have  that~$G$ and~$H$ are cospectral if and
  only if~$\hom(G,\class{F}) = \hom(H,\class{F})$.
\end{theorem}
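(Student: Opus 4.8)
The plan is to reduce this to Theorem~\ref{thm:general} by showing that cospectrality is an equivalence relation that interpolates between~$\equiv^1_\Clogic$ and~$\equiv^3_\Clogic$. Concretely, I would verify two containments: first, that cospectral graphs are~$\Clogic^1$-equivalent, i.e., cospectrality is finer than~$\equiv^1_\Clogic$; second, that~$\Clogic^3$-equivalence implies cospectrality, i.e., cospectrality is coarser than~$\equiv^3_\Clogic$. Once both are in hand, Theorem~\ref{thm:general} applied with~$k = 3$ immediately yields that there is no class~$\class{F}$ of graphs capturing cospectrality via the right-homomorphism vector.

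For the first containment, recall that two loopless graphs are~$\equiv^1_\Clogic$-equivalent precisely when they have the same number of vertices (as noted in the excerpt just before Theorem~\ref{thm:general}). If~$G$ and~$H$ are cospectral, their characteristic polynomials~$p(G,x) = p(H,x)$ have the same degree, and that degree is exactly~$\card{\vertex(G)} = \card{\vertex(H)}$; hence~$G \equiv^1_\Clogic H$. For the second containment, I would combine two known facts. By Theorem~\ref{thm:grohe-cospectral}, cospectrality equals equality of left-homomorphism vectors restricted to the class~$\class{C}$ of cycles (including the degenerate ones on one and two vertices). Every cycle~$\cycle[n]$ has treewidth at most~$2$, so~$\class{C} \subseteq \classoftrees[2]$, and therefore equality of left-homomorphism vectors on~$\classoftrees[2]$ implies equality on~$\class{C}$, i.e., implies cospectrality. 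But by Theorem~\ref{thm:grohe-ck-equiv} with~$k = 3$, equality of left-homomorphism vectors on~$\classoftrees[2]$ is exactly~$\Clogic^3$-equivalence. Chaining these, $G \equiv^3_\Clogic H$ implies $G$ and $H$ are cospectral.

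With cospectrality sandwiched between~$\equiv^1_\Clogic$ and~$\equiv^3_\Clogic$, and being manifestly an equivalence relation (equality of polynomials is an equivalence relation), Theorem~\ref{thm:general} applies verbatim and gives the conclusion. I do not anticipate a serious obstacle here: the argument is a short interpolation sandwiched between results already established in the excerpt. The only point requiring a moment's care is making sure the degenerate cycles do not cause trouble—but~$\cycle[1]$ and~$\cycle[2]$ are the single-vertex and single-edge graphs, each of treewidth at most~$2$ (in fact at most~$1$), so they too lie in~$\classoftrees[2]$ and the inclusion~$\class{C} \subseteq \classoftrees[2]$ holds as stated.
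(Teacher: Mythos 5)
Your proposal is correct and matches the paper's own proof: both sandwich cospectrality between $\equiv^1_\Clogic$ (via equal vertex counts from the degree of the characteristic polynomial) and $\equiv^3_\Clogic$ (via Theorems~\ref{thm:grohe-cospectral} and~\ref{thm:grohe-ck-equiv} and the fact that cycles have treewidth at most two), then invoke Theorem~\ref{thm:general} with $k=3$. Your extra check on the degenerate cycles is a harmless refinement the paper leaves implicit.
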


\begin{proof} It is known that any two~$\equiv^3_{\Clogic}$-equivalent
  graphs are cospectral; since all cycles have treewidth at most two,
  this follows by combining Theorems~\ref{thm:grohe-cospectral}
  and~\ref{thm:grohe-ck-equiv}, but see also Theorem~2.1
  in~\cite{DawarSeveriniZapata2017}. Moreover, any two cospectral
  graphs have the same number of vertices. Therefore,
  cospectrality is finer than~$\equiv^1_\Clogic$ and coarser
  than~$\equiv^3_{\Clogic}$. The result follows from
  Theorem~\ref{thm:general}.
\end{proof}

\section{Left Profiles} \label{sec:left}
For every graph $G$, there is a polynomial $\chrom(G, x)$ in the variable $x$, called the \emph{chromatic polynomial} of $G$, such that, for every $k\geq 1$, the value $\chrom(G,k)$ is the number of $k$-colorings of $G$ (see the survey \cite{read1968introduction} for the history of chromatic polynomials and basic facts about them).
Furthermore, if $G$ has $n$ vertices, then $\chrom(G, x)$ has degree $n$ and its leading coefficient is $1$. For example,
\begin{itemize}
\item
$\chrom(I_n,x)=x^n$, where $n\geq 1$;
\item $\chrom(K_n,x ) = x(x-1)\cdots (x-n+1)$, where $n\geq 1$;
\item  $\chrom(C_n,x) = (x-1)^n + (-1)^n(x-1)$, where  $n\geq 3$.
\end{itemize}

\ignore{
The \emph{chromatic polynomial} $\chrom(G, x)$ of a graph $G$ is a graph polynomial in the variable $x$ that gives the number of $n$-colorings ($n \geq 1$) of $G$ by evaluating $\chrom(G, n)$.

\begin{example}\label{chrom_eq_example}
Suppose that we have $x$ colors available to draw the vertices in a graph so that no two adjacent vertices are drawn the same color, we will show below how to derive $\chrom(G, x)$ for a graph $G$, namely the number of $x$-colorings of $G$. Notice that $\chrom(G, n) = 0$ if $n < \card{\vertex(G)}$, which means that $\chrom(G, x)$ consistently gives the number of $x$-colorings of $G$.

Let us first consider $\indep[1]$. There are $x$ colors to draw the single vertex, so $\chrom(\indep[1], x) = x$. On other hand, since $\clique[1] = \indep[1]$, we have $\chrom(\clique[1], x) = \chrom(\indep[1], x) = x$.

Next, we consider $\clique[2]$. There are $x$ colors to draw a vertex and there are $x - 1$ colors to draw a different vertex, so $\chrom(\clique[2], x) = x(x - 1)$. In $\clique[n]$ for $n \geq 3$, we have $x$ colors available for a vertex $v$, $x - 1$ colors available for a different vertex $v'$, $x - 2$ colors available for a vertex $v''$ different than $v$ and $v''$, and so on. Therefore $\chrom(\clique[n], x)$ is $x(x - 1) \etc (x - n + 1)$.
}

There are two useful techniques for deriving the chromatic polynomial $\chrom(G, x)$ of an arbitrary graph $G$.

\smallskip

\noindent \emph{Multiplicativity}. If~$G = G_1 \dunion G_2$ is the disjoint union of two graphs~$G_1$
and~$G_2$, then~$\chrom(G, x) = \chrom(G_1, x) \cdot \chrom(G_2, x)$.

\smallskip

\noindent  \emph{Addition-Contraction Recursion}. If~$u, v \in \vertex(G)$ are distinct and~$(u, v) \notin \edge(G)$, then $\chrom(G, x) = \chrom(G_1, x) + \chrom(G_2, x)$ where~$G_1$ is obtained from~$G$ by adding the edge~$(u,v)$, while~$G_2$ is obtained from~$G$ by contracting the two vertices~$u, v$.

    \ignore{
    \footnote{In fact, we are using an equivalent form of deletion-contraction recursion where on the righthand-side of the formula we add an edge to $G$ rather than deleting an edge from it to obtain $G'$.}
    }
    The base cases of the addition-contraction recursion are of the form $\chrom(\clique[n], x) = x(x - 1) \cdots (x - n + 1)$.
     Thus, to obtain $\chrom(G, x)$ for a graph $G$ that is not a clique and has $n$ vertices, we can use this recursion to expand $\chrom(G, x)$ until every term  in the expansion is of the form $\chrom(\clique[m], x)$ with $m\leq n$.
%


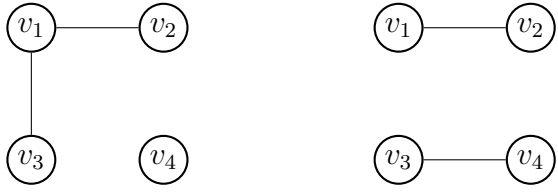
\begin{figure}[!t]
\centering
\begin{tikzpicture}[every node/.style={draw=black,thick,circle,inner sep=0pt}]
\node[circle,inner sep=2pt,minimum size=5pt] (a1) {$v_1$};
\node[circle,inner sep=2pt,minimum size=5pt, right of=a1] (a2) {$v_2$};
\node[circle,inner sep=2pt,minimum size=5pt, below of=a1] (a3) {$v_3$};
\node[circle,inner sep=2pt,minimum size=5pt, below of=a2] (a4) {$v_4$};
\node[circle,inner sep=2pt,minimum size=5pt, right =70pt of a2] (a5) {$v_1$};
\node[circle,inner sep=2pt,minimum size=5pt, right of=a5] (a6) {$v_2$};
\node[circle,inner sep=2pt,minimum size=5pt, below of=a5] (a7) {$v_3$};
\node[circle,inner sep=2pt,minimum size=5pt, below of=a6] (a8) {$v_4$};
\draw
(a2) edge (a1)
(a3) edge (a1)
(a6) edge (a5)
(a8) edge (a7)
;
\end{tikzpicture}
\caption{Two chromatically equivalent graphs, $X_1$ (left) and $X_2$ (right).}
\label{fig_chrom_eq}
\end{figure}
\begin{example}\label{chrom_eq_example}
Consider the two graphs $X_1 = \indep[1] \dunion \pathgraph[3]$ and $X_2 = \pathgraph[2] \dunion \pathgraph[2]$ depicted in Figure~\ref{fig_chrom_eq}. Using the preceding two techniques and the chromatic polynomials of $\clique[2]$ and $\clique[3]$, it is easy to verify that
$\chrom(X_1, x) = x^2(x - 1)^2 = \chrom(X_2, x)$.
\end{example}

We say that two graphs $G$ and $H$ are \emph{chromatically equivalent} if they have the same number of $n$-colorings, for every $n\geq 1$. This is equivalent to saying that $G$ and $H$ have the same chromatic polynomial.
 The two graphs $X_1, X_2$  in Example \ref{chrom_eq_example} are chromatically equivalent,  a fact that will be used in what follows.
   Furthermore, all trees with the same number of vertices are pairwise chromatically equivalent, because if $T$ is a tree
with $n$ vertices, then $\chrom(T,x) = x(x-1)^{n-1}$.

It is obvious  that chromatic equivalence is characterized by restricting the right profile to the class $\class{K}$ of all cliques, that is, two graphs $G$ and $H$ are chromatically equivalent if and only if $\hom(G, \class{K})= \hom(H, \class{K})$.
In contrast, we will show that chromatic equivalence cannot be characterized by  restricting the left profile.

\begin{theorem}\label{lovasz_vector_not_characterize_chrom_eq}
There is no class $\class{F}$ of graphs such that for every two graphs $G$ and $H$, we have that $G$ and $H$ are chromatically equivalent if and only if $\lefthomvec(\class{F}, G) = \lefthomvec(\class{F}, H)$.
\end{theorem}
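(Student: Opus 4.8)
The plan is to argue by contradiction in a way that mirrors the two-case structure used for the right-homomorphism vector, but now exploiting the presence versus absence of edgeless graphs in the class~$\class{F}$. Suppose such a class~$\class{F}$ exists. If~$\class{F}\subseteq\classofindeps$, i.e.\ every graph in~$\class{F}$ is an independent set, then for any two graphs~$G$ and~$H$ with the same number of vertices we have~$\hom(G,I_m)=m^{|\vertex(G)|}=m^{|\vertex(H)|}=\hom(H,I_m)$ for every~$m$, so~$\lefthomvec(\class{F},G)=\lefthomvec(\class{F},H)$; but two non-isomorphic graphs on the same vertex set with different numbers of edges need not be chromatically equivalent (e.g.\ $P_2$ and $I_2$), giving a contradiction. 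So we may assume~$\class{F}$ contains some graph~$F^\ast$ with at least one edge.

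The core of the argument is then to produce two chromatically equivalent graphs~$G$ and~$H$ that are distinguished by~$\hom(\placeholder,F^\ast)$ for \emph{every} graph~$F^\ast$ having an edge. The natural candidates are the graphs~$X_1=\indep[1]\dunion\pathgraph[3]$ and~$X_2=\pathgraph[2]\dunion\pathgraph[2]$ from Example~\ref{chrom_eq_example}, which are chromatically equivalent but not isomorphic. For an arbitrary graph~$D$, one computes~$\hom(X_1,D)=\card{\vertex(D)}\cdot\hom(\pathgraph[3],D)$ and~$\hom(X_2,D)=\hom(\pathgraph[2],D)^2=\card{\edge^{\to}(D)}^2$, where~$\hom(\pathgraph[2],D)$ counts ordered pairs forming an edge (so it equals~$2\card{\edge(D)}$) and~$\hom(\pathgraph[3],D)=\sum_{v}\deg(v)^2$. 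Thus~$\hom(X_1,D)-\hom(X_2,D)=n\sum_v\deg(v)^2-(\sum_v\deg(v))^2$, which by Cauchy--Schwarz is nonnegative and equals zero exactly when all degrees are equal, i.e.\ when~$D$ is regular. Hence on \emph{non-regular} graphs~$X_1$ and~$X_2$ have different left-homomorphism values, and on regular graphs (in particular on edgeless graphs, and more usefully on any~$F^\ast$ we can arrange to be regular) they agree.

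This observation alone is not quite enough, because~$\class{F}$ might consist only of \emph{regular} graphs with edges, on which~$X_1$ and~$X_2$ are indistinguishable. To handle this, I would introduce a second pair: take~$G=X_1\dunion R$ and~$H=X_2\dunion R$ where~$R$ is chosen so that the pair becomes distinguishable. Using multiplicativity of~$\hom$ over disjoint unions, $\hom(G,D)=\hom(X_1,D)\hom(R,D)$ and similarly for~$H$, so~$G$ and~$H$ are chromatically equivalent (chromatic polynomials multiply over disjoint unions) and~$\hom(G,D)=\hom(H,D)$ iff~$\hom(R,D)=0$ or~$D$ is regular. The remaining task---and the main obstacle---is to show that one cannot \emph{simultaneously} kill the distinguishing power for all members of~$\class{F}$: if every~$F\in\class{F}$ with an edge is regular, we need a chromatically equivalent non-isomorphic pair that is distinguished on \emph{all} regular graphs with an edge. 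Here I would instead use the fact, quoted in the text, that all trees on~$n$ vertices are chromatically equivalent; comparing two non-isomorphic trees on the same number of vertices (say the path~$\pathgraph[n]$ and a star), their left-homomorphism vectors differ on some graph~$F$, and one then analyzes whether such a distinguishing~$F$ can be forced to have an edge and to be regular, or whether distinct trees are separated precisely by $\hom(\placeholder,F)$ for forests/edgeless~$F$. The crux is a careful case analysis on the structure of~$\class{F}$: either~$\class{F}$ fails to distinguish some chromatically-inequivalent pair of ``regular-type'' graphs (giving one contradiction), or~$\class{F}$ distinguishes some chromatically-equivalent pair (giving the other), and the main work is in exhibiting an explicit family of chromatically equivalent pairs rich enough that no single class~$\class{F}$ can thread the needle.
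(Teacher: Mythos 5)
There is a genuine gap, and it starts with a direction error. The theorem concerns the \emph{left}-homomorphism vector, so to exploit the chromatically equivalent pair $X_1=\indep[1]\dunion\pathgraph[3]$ and $X_2=\pathgraph[2]\dunion\pathgraph[2]$ you must determine for which graphs $F$ one has $\hom(F,X_1)=\hom(F,X_2)$, i.e.\ count homomorphisms \emph{into} $X_1$ and $X_2$. Your central computation ($\hom(X_1,D)=|\vertex(D)|\sum_v\deg(v)^2$, $\hom(X_2,D)=(\sum_v\deg(v))^2$, Cauchy--Schwarz, equality iff $D$ regular) counts homomorphisms \emph{out of} $X_1$ and $X_2$; it characterizes which $D$ fail to separate them in the \emph{right}-homomorphism vector, which is the wrong question here. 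The correct answer (the paper's Lemma~\ref{chr_equv_key_lemma}) is quite different: $\hom(F,X_1)\geq\hom(F,X_2)$ always, with equality iff $F$ is not $2$-colorable or $F=\indep[m]\dunion\pathgraph[2]^{\oplus n}$. For instance $C_4$ is regular, so your criterion would let it survive in $\class{F}$, yet $\hom(C_4,X_1)>\hom(C_4,X_2)$, so it cannot. (Your Case~1 has the same reversal: for $\class{F}\subseteq\classofindeps$ the relevant count is $\hom(I_m,G)=|\vertex(G)|^m$, not $\hom(G,I_m)=m^{|\vertex(G)|}$, which is $0$ whenever $G$ has an edge; the conclusion of that case still stands, but only after the correction.)

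The second problem is that, even granting a corrected characterization of the surviving class, you explicitly leave the second half of the argument unfinished: you acknowledge that you still need a chromatically \emph{in}equivalent pair that no surviving $F$ can separate, and the sketch about trees and ``threading the needle'' does not supply one. The paper closes this by observing that any admissible $\class{F}$ must lie inside $\class{F}_1\cup\class{F}_2$ with $\class{F}_1$ the non-$2$-colorable graphs and $\class{F}_2=\sete{\indep[m]\dunion\pathgraph[2]^{\oplus n}}$, and then taking $G=\cycle[8]$ and $H=\cycle[4]\dunion\cycle[4]$: these have different chromatic polynomials, yet $\hom(F,G)=0=\hom(F,H)$ for $F\in\class{F}_1$ (both are $2$-colorable) and $\hom(F,G)=8^m16^n=\hom(F,H)$ for $F\in\class{F}_2$ (both have $8$ vertices and $8$ edges). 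Your overall two-pronged strategy matches the paper's, but without the correct separating lemma and without an explicit second pair, the proof does not go through.
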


Before proving Theorem \ref{lovasz_vector_not_characterize_chrom_eq}, we present two lemmas.

\begin{lemma}
\label{chr_equv_prop}
For every $2$-colorable connected graph $G$ with at least three vertices, we have that $\sur(G, \pathgraph[3]) > 0$.
\end{lemma}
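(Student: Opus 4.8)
The plan is to build, directly from a proper $2$-colouring of $G$, an explicit surjective homomorphism onto $\pathgraph[3]$. Write $\pathgraph[3]$ as the path $a - b - c$, with $b$ the middle vertex, so that $\edge(\pathgraph[3]) = \{(a,b),(b,c)\}$ and $\{a,c\}$ is independent. I would first record the general shape of any homomorphism $h\colon G \to \pathgraph[3]$: since every edge of $\pathgraph[3]$ has $b$ as an endpoint, every edge $(u,v)$ of $G$ must have exactly one of $h(u),h(v)$ equal to $b$; hence $h^{-1}(b)$ and $h^{-1}(\{a,c\})$ are both independent and partition $\vertex(G)$, i.e., they form a proper $2$-colouring of $G$. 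Conversely, given any proper $2$-colouring $(X,Y)$ of $G$, the map sending every vertex of $Y$ to $b$ and every vertex of $X$ to an arbitrarily chosen element of $\{a,c\}$ is a homomorphism $G \to \pathgraph[3]$, because no edge of $G$ lies inside $X$ or inside $Y$.

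Next I would use the hypotheses on $G$ to pick a convenient colouring. Since $G$ is connected and has at least three (in particular, at least two) vertices, $G$ has an edge and no isolated vertex; consequently, in any proper $2$-colouring both colour classes are nonempty and, moreover, every vertex is adjacent to some vertex of the other class. Fix such a colouring with classes $C_1,C_2$; since $|C_1|+|C_2| = |\vertex(G)| \geq 3$, after possibly swapping the classes we may assume $|C_1| \geq 2$, with $C_2$ still nonempty.

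Finally I would take the homomorphism $h$ mapping all of $C_2$ to $b$ and mapping $C_1$ onto $\{a,c\}$ surjectively (possible since $|C_1|\geq 2$); by the first paragraph this is a homomorphism $G \to \pathgraph[3]$. It remains to check surjectivity onto $\pathgraph[3]$, i.e., that the image $h(G)$ has vertex set $\{a,b,c\}$ and edge set $\{(a,b),(b,c)\}$. All three vertices are hit: $b$ because $C_2\neq\emptyset$, and $a,c$ by the choice of $h$ on $C_1$. For the edges, pick $u\in C_1$ with $h(u)=a$; since $u$ is not isolated it has a neighbour, which lies in $C_2$ and is mapped to $b$, so $(a,b)\in\edge(h(G))$; the same argument with a vertex mapped to $c$ yields $(b,c)\in\edge(h(G))$. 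Hence $h$ is a surjective homomorphism, so $\sur(G,\pathgraph[3])>0$. The argument is entirely elementary and I foresee no real obstacle; the only point deserving care is that connectedness is genuinely used — it supplies both the nonemptiness of $C_2$ and the absence of isolated vertices, the latter being exactly what forces both edges of $\pathgraph[3]$ into the image (without connectedness the statement fails, e.g. for $\indep[1]\dunion\pathgraph[2]$).
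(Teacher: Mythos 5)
Your proof is correct and follows essentially the same route as the paper: start from a proper $2$-colouring (equivalently, a surjective homomorphism onto $\clique[2]$), refine the larger colour class into two nonempty parts sent to the two endpoints of $\pathgraph[3]$, and use connectedness to ensure every vertex has a neighbour in the other class so that both edges of $\pathgraph[3]$ are hit. The only cosmetic differences are that you treat all cases uniformly (the paper handles $\card{\vertex(G)}=3$ separately and moves just a single vertex to the third colour) and that you preface the construction with a full characterization of homomorphisms into $\pathgraph[3]$, which is not needed for the lemma.
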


\begin{proof}
Let $G$ be a $2$-colorable connected graph with $\card{\vertex(G)} \geq 3$. We argue that $\sur(G, \pathgraph[3]) > 0$.

For simplicity, in what follows we assume that $\clique[2]$ is a graph with vertex set $\vertex(\clique[2]) = \sete{v_1, v_2}$ and edge set $\edge(\clique[2]) = \sete{(v_1, v_2)}$, and that $\pathgraph[3]$ is a graph with $\vertex(\pathgraph[3]) = \sete{v_1, v_2, v_3}$ and $\edge(\pathgraph[3]) = \sete{(v_1, v_2), (v_2, v_3)}$; in other words, $\clique[2]$ is a subgraph of $\pathgraph[3]$.

Assume now that  $\card{\vertex(G)} \geq 2$. Then $G$ must contain an edge because it is connected. Moreover, we have $\sur(G, \clique[2]) > 0$ as $G$ is $2$-colorable. Let $h: G \to \clique[2]$ be a surjective homomorphism and, without loss of generality, assume that $\card{\inv{h}(v_1)} > 1$, i.e., more than one vertex in $\vertex(G)$ is mapped to $v_1 \in \vertex(\clique[2])$ under $h$ (note that at least one of $\inv{h}(v_1)$ and $\inv{h}(v_2)$ has cardinality $> 1$). Pick two distinct vertices $a_1, a_2 \in \vertex(G)$ such that $h(a_1) = h(a_2) = v_1$; then there are two vertices $b_1, b_2 \in \vertex(G)$ (not necessarily distinct) such that $(a_1, b_1), (a_2, b_2) \in \edge(G)$ (since $G$ is connected) and $h(b_1) = h(b_2) = v_2$. It follows that the homomorphism $h': G \to \pathgraph[3]$ such that $h'(u) = h(u)$ for $u \neq a_2$ and $h'(a_2) = v_3$ is surjective. We conclude that $\sur(G, \pathgraph[3]) > 0$.
\end{proof}

\ignore{
We define some notions before proving Lemma \ref{chr_equv_prop}. First, let $\nei[G](v)$ denote the set of neighbors of the vertex $v$ in a graph $G$, i.e.~$\nei[G](v) \defas \setm{u \in \vertex(G)}{(u, v) \in \edge(G)}$. Next, given two graphs $G, H$ and a homomorphism $h: G \to H$, we define a \emph{derivation of the homomorphism image of G under $h$} to be a sequence $(G, F_0, \etc, F_n, h(G))$ of graphs where
\begin{enumerate}
\item $n = \card{\vertex(G)} - \card{\vertex(h(G))}$,
\item for $0 \leq i \leq n$, the graph $F_i$ consists of
\begin{itemize}
\item vertices $u_V$ indexed by a subset $V \subseteq \vertex(G)$ with $\card{h(V)} = 1$ (namely, vertices in $h(V) \defas \setm{h(v)}{v \in V}$ share the same image under $h$) such that the indexing subsets $V$ of all vertices $u_V$ form a partition of $\vertex(G)$ and
\item edges $(u_{V_1}, u_{V_2})$ such that $(v_1, v_2) \in \edge(G)$ for some $v_1 \in V_1$ and $v_2 \in V_2$,
\end{itemize}
\item $F_0$ has vertex set $\vertex(F_0) \defas \setm{u_{\sete{v}}}{v \in \vertex(G)}$,
\item for $0 \leq i < n$, $F_{i + 1}$ is obtained from $F_i$ by picking two distinct vertices $u_{V_1}, u_{V_2}$ with $h(V_1) = h(V_2)$ to merge:\footnote{For $0 \leq i < n$, such vertices $u_{V_1}, u_{V_2}$ must exist in $F_i$ since $\card{\vertex(F_i)} > \card{h(G)}$.} these two vertices together with their incident edges are removed and a new vertex $u_{V_1 \union V_2}$ is introduced and edges are added such that $\nei[F_{i + 1}](u_{V_1 \union V_2}) = \nei[F_i](u_{V_1}) \union \nei[F_i](u_{V_2})$ (note that the properties in (2) are preserved in $F_{i + 1}$).
\end{enumerate}

\begin{observation} The following are immediate from the definition and will be used in the proof of Lemma \ref{chr_equv_prop}.
\begin{itemize}
\item $F_0$ is isomorphic to $G$.
\item For $0 \leq i < n$, there is a surjective homomorphism from $F_i$  onto $F_{i + 1}$ and $\card{\vertex(F_{i + 1})} = \card{\vertex(F_i)} - 1$.
\item $F_n$ is isomorphic to the image $h(G)$ of $G$.
\end{itemize}
\end{observation}

\begin{proof}[Proof of Lemma \ref{chr_equv_prop}]
Let $G$ be as in the premise. Then $\chrom(G) = 2$, namely there is a homomorphism $h: G \to \clique[2]$ such that $h(G) = \clique[2]$.

Now consider a derivation of the homomorphic image of $G$ under $h$, $(G, F_0, \etc, F_n, \clique[2])$. Then $n = \card{\vertex(G)} - 2 \geq 1$ and by the observation we have:
\begin{enumerate}
\item For $0 \leq i \leq n$, $F_i$ is connected.
\item For $0 \leq i \leq n$, $F_i$ contains $(n - i + 2)$ vertices.
\item For $0 \leq i \leq j \leq n$, there is a surjective homomorphism from $F_i$  onto $F_j$.
\item $F_n$ is isomorphic to $h(G) = \clique[2]$.
\end{enumerate}
The above (4) is immediate. And (1)-(3) can be verified by induction. Here we only prove (1) because (2) and (3) are trivial. For the base case, $F_0$ is connected because $G$ already is and $F_0$ is isomorphic to $G$. For the inductive case, let $0 \leq i < n$ and assume that $F_i$ is connected, since there is a surjective homomorphism from $F_i$  onto $F_{i + 1}$, it follows from Lemma \ref{conn_prsv_homo} that $F_{i + 1}$ is connected as well.

From (3) and (4), we have
\begin{enumerate}
\setcounter{enumi}{4}
\item For $1 \leq i \leq n$, there is a surjective homomorphism from $F_i$ onto $\clique[2]$ and hence $\chrom(F_i) = 2$.
\end{enumerate}

In particular, $F_{n - 1}$ is connected by (1), it contains $3$ vertices by (2), and $\chrom(F_i) = 2$ by (5). Therefore,
\begin{enumerate}
\setcounter{enumi}{5}
\item $F_{n - 1}$ is isomorphic to $\pathgraph[3]$.
\end{enumerate}

Since $G$ is isomorphic to $F_0$ and since there is a surjective homomorphism from $F_0$  onto $F_{n - 1}$ by (3), it follows from (6) that there is a surjective homomorphism from $G$  onto $\pathgraph[3]$. We conclude that $\sur(G, \pathgraph[3]) > 0$.
\end{proof}

}

Given any graph $G$, we define the \emph{$n$-fold disjoint union $G^{\oplus n}$ of $G$}, where $n \geq 0$, inductively as follows:
\begin{displaymath}
\begin{array}{llllll}
G^{\oplus 0} & \defas & \emptyset; &
G^{\oplus n + 1} & \defas & G^{\oplus n} \dunion G.\cr
\end{array}
\end{displaymath}
We also set $\hom(\emptyset, H) \defas 1$ for every graph $H$.

The next lemma  tells which graphs $G$ distinguish the graphs $X_1$ and $X_2$ in Example \ref{chrom_eq_example} via $\hom(G, X_1)$ and $\hom(G, X_2)$.

\begin{lemma}
\label{chr_equv_key_lemma} If $G$ is a graph, then
$\hom(G, X_1) \geq \hom(G, X_2)$. Furthermore, $\hom(G, X_1) = \hom(G, X_2)$ holds if and only if either $G$ is not $2$-colorable or
$G$ is a disjoint union of vertices and edges (in symbols,  $G = \indep[m] \dunion \pathgraph[2]^{\oplus n}$ for some $m, n \geq 0$ such that $m + n \geq 1$).
\end{lemma}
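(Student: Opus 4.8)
The plan is to recall that $X_1 = \indep[1] \dunion \pathgraph[3]$ and $X_2 = \pathgraph[2] \dunion \pathgraph[2]$, and to reduce everything to the connected components of $G$ via the multiplicativity of homomorphism counts: if $G = G_1 \dunion \cdots \dunion G_r$ is the decomposition of $G$ into connected components, then $\hom(G, X_i) = \prod_{j=1}^r \hom(G_j, X_i)$ for $i \in \{1,2\}$. So the first step is to understand $\hom(C, X_1)$ versus $\hom(C, X_2)$ for a connected graph $C$. There are three cases. If $C$ is not $2$-colorable, then since both $X_1$ and $X_2$ are $2$-colorable (indeed bipartite), we have $\hom(C, X_1) = 0 = \hom(C, X_2)$. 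If $C$ has one vertex, then $\hom(C, X_1) = 4 = \hom(C, X_2)$. If $C = \pathgraph[2]$, then a homomorphism from an edge to $X_1$ picks an edge of $X_1$ with an orientation; $X_1$ has $2$ edges, so $\hom(\pathgraph[2], X_1) = 4 = \hom(\pathgraph[2], X_2)$. The crux is the remaining case: $C$ is $2$-colorable, connected, and has at least three vertices, so by Lemma \ref{chr_equv_prop} we have $\sur(C, \pathgraph[3]) > 0$. Here I would show the strict inequality $\hom(C, X_1) > \hom(C, X_2)$.

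For the strict-inequality step, the key observation is a structural comparison of $X_1$ and $X_2$ as ``targets.'' Write $\pathgraph[3]$ with vertices $v_1 - v_2 - v_3$ and $\pathgraph[2]$ as a single edge. Since $C$ is connected and $2$-colorable, any homomorphism $h : C \to X_1$ must land entirely inside one connected component of $X_1$ (connectedness of $C$), and similarly for $X_2$. So $\hom(C, X_1) = \hom(C, \indep[1]) + \hom(C, \pathgraph[3]) = \hom(C, \pathgraph[3])$ when $C$ has at least one edge, and $\hom(C, X_2) = 2\,\hom(C, \pathgraph[2])$ (two isomorphic components, each an edge). Now I would exhibit an injection from the set of homomorphisms $C \to X_2$ (equivalently, two copies of $\Hom(C, \pathgraph[2])$) into $\Hom(C, \pathgraph[3])$: a homomorphism of $C$ into one copy of $\pathgraph[2]$ (the edge $v_1 v_2$) is already a homomorphism into $\pathgraph[3]$, and a homomorphism into the other copy (the edge $v_2 v_3$) is likewise already a homomorphism into $\pathgraph[3]$; these two families are disjoint because a homomorphism of a connected graph with an edge cannot have image contained in both $\{v_1,v_2\}$ and $\{v_2,v_3\}$ simultaneously (that would force the image to be the single vertex $\{v_2\}$, contradicting that $C$ has an edge). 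Hence $\hom(C, X_1) = \hom(C,\pathgraph[3]) \ge 2\,\hom(C,\pathgraph[2]) = \hom(C, X_2)$. The inequality is strict exactly because $\sur(C,\pathgraph[3]) > 0$: a surjective homomorphism $C \to \pathgraph[3]$ uses the vertex $v_2$ as well as both $v_1$ and $v_3$, so its image is not contained in either edge, i.e., it is not in the image of the injection; thus $\hom(C,\pathgraph[3]) \ge 2\,\hom(C,\pathgraph[2]) + \sur(C,\pathgraph[3]) > 2\,\hom(C,\pathgraph[2])$.

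Assembling the pieces: $\hom(G, X_1) = \prod_j \hom(G_j, X_1) \ge \prod_j \hom(G_j, X_2) = \hom(G, X_2)$, which proves the inequality part. For the equality characterization, equality holds iff $\hom(G_j, X_1) = \hom(G_j, X_2)$ for every component $G_j$ (using that all these quantities are nonnegative, and that if some factor on the left is strictly larger while no factor is zero then the product is strictly larger; and if some factor is zero then both products are zero, which is the non-$2$-colorable subcase). By the case analysis above, $\hom(G_j, X_1) = \hom(G_j, X_2)$ holds precisely when $G_j$ is not $2$-colorable, or $G_j \isom \indep[1]$, or $G_j \isom \pathgraph[2]$. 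Therefore $\hom(G, X_1) = \hom(G, X_2)$ iff every component of $G$ is non-$2$-colorable, a single vertex, or a single edge. If at least one component is non-$2$-colorable, then $G$ is not $2$-colorable; otherwise every component is $\indep[1]$ or $\pathgraph[2]$, i.e., $G = \indep[m] \dunion \pathgraph[2]^{\oplus n}$ for some $m, n \ge 0$ with $m + n \ge 1$ (the constraint $m+n \ge 1$ just records that a graph has at least one vertex, or one handles the empty graph separately via $\hom(\emptyset, X_i) = 1$). This matches the statement. The main obstacle I anticipate is getting the disjointness/injectivity bookkeeping in the strict-inequality step exactly right — in particular being careful that the ``two copies of $\pathgraph[2]$'' inside $X_2$ and the ``two edges'' inside $\pathgraph[3] \subseteq X_1$ are matched up so that no homomorphism is double-counted, and that the surjective homomorphisms of $C$ onto $\pathgraph[3]$ are genuinely extra; everything else is a routine component-wise multiplicativity argument.
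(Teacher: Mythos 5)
Your proof is correct and follows essentially the same route as the paper's: reduce to connected components via multiplicativity, dispose of the non-$2$-colorable and $\leq 2$-vertex cases directly, and use Lemma~\ref{chr_equv_prop} to get the strict inequality for connected $2$-colorable graphs on at least three vertices. Your injection-plus-extra-surjections argument is just a repackaging of the paper's identity $\hom(G,X_1)=2\sur(G,\pathgraph[2])+\sur(G,\pathgraph[3])$ versus $\hom(G,X_2)=2\sur(G,\pathgraph[2])$, obtained by classifying homomorphisms according to their image.
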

\begin{proof} Let $G$ be a graph. We distinguish two cases.

\smallskip

\noindent\emph{Case 1:} $G$ is connected.
If $G$ is not $2$-colorable, then $\hom(G,X_1) = 0 = \hom(G,X_2)$, since both $X_1$ and $X_2$ are $2$-colorable. If $\card{\vertex(G)} \leq 2$, then, since $G$ is connected, we have that $G= I_1$ or $G=P_2$, from which it follows easily that $\hom(G,X_1) = 4 = \hom(G,X_2)$.
So, assume that $G$ is  $2$-colorable and  $\card{\vertex(G)} \geq 3$. We will show that, in this case, $\hom(G,X_1) > \hom(G,X_2)$.
Since $G$ is connected, it must have at least one edge.
 Then any homomorphic image of $G$ must contain an edge and must be connected as well since the homomorphic image of a connected graph is connected. Thus, the calculation of $\hom(G, X_1)$ and of $\hom(G, X_2)$ can be broken down to the calculation of the number of surjective homomorphisms from $G$ onto a connected subgraph of $X_1$ or $X_2$ that contains at least one edge; note that such a\ignore{connected} subgraph must be isomorphic to $\pathgraph[2]$ or $\pathgraph[3]$. Since $X_1$ has exactly two subgraphs isomorphic to $\pathgraph[2]$ and one subgraph isomorphic to $\pathgraph[3]$ whereas $X_2$ has exactly two subgraphs isomorphic to $\pathgraph[2]$ and no subgraph isomorphic to $\pathgraph[3]$, we obtain
 $\hom(G, X_1) =  2 \sur(G, \pathgraph[2])  + \sur(G, \pathgraph[3])$ and $\hom(G,X_2) = 2 \sur(G, \pathgraph[2])$. By Lemma \ref{chr_equv_prop}, we have  that $\sur(G, \pathgraph[3]) > 0$. It follows that $\hom(G, X_1) > \hom(G,X_2)$.

 \ignore{
\begin{displaymath}
\begin{array}{llll}
\hom(G, X_1) & = & 2 \sur(G, \pathgraph[2])  + \sur(G, \pathgraph[3]) & \cr
>  & 2 \sur(G, \pathgraph[2])  & = & \hom(G,X_2), \cr
\end{array}
\end{displaymath}
where the inequality holds because, by Lemma \ref{chr_equv_prop}, we have  that $\sur(G, \pathgraph[3])$.
}

Thus, if $G$ is connected, then  $\hom(G, X_1) \geq \hom(G, X_2)$, and the equality holds if and only if $G$ is not $2$-colorable or $G \in \sete{\indep[1], \pathgraph[2]}$.

\smallskip

\noindent\emph{Case 2:} $G$ is not connected.
Let $H_1,\ldots,H_k$, $k\geq 2$, be the connected components of $G$. Thus, $G = \bdunion^k_{i = 1} H_i$ and so
 $\hom(G, X_1)  =  \ds{\Pi^k_{i = 1} \hom(H_i, X_1)}$ and $\hom(G,X_2) =  \ds{\Pi^k_{i = 1} \hom(H_i, X_2)}$. Since each $H_i$ is  connected, the previous case implies that $\hom(H_i, X_1) \geq \hom(H_i, X_2)$, for $1\leq i\leq k$. Consequently, $\hom(G,X_1) \geq \hom(G,X_2)$. Furthermore, the equality holds if and only if $G$ is not $2$-colorable (i.e., at least one $H_i$ is not $2$-colorable) or every $H_i\in \sete{\indep[1], \pathgraph[2]}$, which means that $G$ is not $2$-colorable or  $G = \indep[m] \dunion \pathgraph[2]^{\oplus k - m}$, for some $m$ with $0 \leq m \leq k$.
\ignore{
\begin{displaymath}
\begin{array}{llll}
\ & \ds{\hom(\bdunion^k_{i = 1} H_i, X_1)} & = & \ds{\prod^k_{i = 1} \hom(H_i, X_1)} \cr
\geq & \ds{\prod^k_{i = 1} \hom(H_i, X_2)} & = & \ds{\hom(\bdunion^k_{i = 1} H_i, X_2)} \cr
\end{array}
\end{displaymath}
where the inequality follows by the previous case and so the equality holds if and only if for each $1 \leq i \leq k$, $\chrom(H_i) \geq 3$ or $H_i \in \sete{\indep[1], \pathgraph[2]}$. Hence, if the equality holds then either $\chrom(G) \geq 3$ or $G = \indep[m] \dunion (\pathgraph[2])^{k - m}$ for some $0 \leq m \leq k$.
}
\end{proof}

We are now ready to prove Theorem \ref{lovasz_vector_not_characterize_chrom_eq}.

\begin{proof}[Proof of Theorem \ref{lovasz_vector_not_characterize_chrom_eq}]
Towards a contradiction, suppose that there is a class $\class{F}$ of graphs such that for every two graphs $G$ and $H$, we have that $G$ and $H$ are chromatically equivalent if and only $\lefthomvec(\class{F}, G) = \lefthomvec(\class{F}, H)$.
In particular, for the chromatically equivalent graphs $X_1, X_2$  in Example \ref{chrom_eq_example}, we have that $\hom(F, X_1) = \hom(F, X_2)$, for every $F \in \class{F}$. Lemma \ref{chr_equv_key_lemma} implies that $\class{F} \subseteq \class{F}_1 \union \class{F}_2$, where
\begin{displaymath}
\begin{array}{lll}
\class{F}_1 & = & \{G ~|~ \mbox{$G$ is not $2$-colorable} \} \cr
\class{F}_2 & = & \sett{\indep[m] \dunion \pathgraph[2]^{\oplus n}}{\mathmode{m, n \geq 0} and \mathmode{m + n \geq 1}}. \cr
\end{array}
\end{displaymath}

Let $G = \cycle[8]$ be the $8$-cycle and let  $H = \cycle[4]\oplus \cycle[4]$ be the disjoint union of two $4$-cycles.
Then $G$ and $H$ are not chromatically equivalent because they have different chromatic polynomials. Indeed,
$\chrom(G, x)  =   (x - 1)^8 + (x - 1)$, while $\chrom(H, x) = ( (x - 1)^4 + (x - 1))^2$.
However, no graph $F$ in $\class{F}_1 \union \class{F}_2$ distinguishes $G$ and $H$ via $\hom(F, G)$ and $\hom(F, H)$. The reason is that if $F$ is not $2$-colorable, then $\hom(F, G) = 0 =  \hom(F, H)$, while if $F = \indep[m] \dunion \pathgraph[2]^{\oplus n}$ for some $m, n \geq 0$ with $m + n \geq 1$, then $\hom(F, G) = 8^m 16^n = \hom(F, H) $.
\end{proof}

\ignore{
\begin{displaymath}
\begin{array}{lrll}
\    & \chrom(G, x) & = & (x - 1) + (x - 1)^8 \cr
\neq & ((x - 1) + (x - 1)^4)^2 & = & \chrom(H, x). \cr
\end{array}
\end{displaymath}
}

\section{Left and Right Profiles} \label{sec:left-right}
In the previous two sections, we saw that several natural relaxations of isomorphism can be captured by restricting one of the two (the left or the right) profiles, but not by the other. In this section, we shall see that other natural, logic-based relaxations of isomorphism  cannot be captured by restricting either of the two profiles.

Let $k\geq 2$ be a positive integer. Recall that $\folog^k$ is the fragment of first-order logic consisting of all FO-formulas with at most $k$ distinct variables.
Two graphs $G$ and $H$ are said to be $\folog^k$-equivalent, denoted by  $G \equiv^{k}_{\mathrm{FO}} H$, if $G$ and $H$ satisfy the same $\folog^k$-sentences. The finite-variable logic $\folog^k$, $k\geq 2$, have been studied extensively in finite model theory (see \cite{DBLP:books/sp/Libkin04,DBLP:series/txtcs/GradelKLMSVVW07}); in particular, it is well known that $\folog^k$-equivalence can be characterized in terms of the $k$-pebble game, $k\geq 2$.
The next result tells that $\folog^k$-equivalence cannot be captured by restricting the left profile or the right profile.

\begin{proposition}
\label{prop:FO-k} Consider a positive integer $k\geq 2$.
\begin{enumerate}
\item
There is no class $\class{F}$ of graphs such that for every two graphs $G$ and $H$, we have that
 $G \equiv^{k}_{\mathrm{FO}} H$ if and only if $\lefthomvec(\class{F}, G) = \lefthomvec(\class{F}, H)$.

\item There is no class $\class{F}$ of graphs such that for every two graphs $G$ and $H$, we have that
 $G \equiv^{k}_{\mathrm{FO}} H$ if and only if $\righthomvec(G, \class{F}) = \righthomvec(H, \class{F})$.
\end{enumerate}
\end{proposition}
\begin{proof}
For the first part, suppose that such a class $\class{F}$ exists. Pick a graph  $D$  in $\class{F}$ and let $c$ be the number of the vertices of $D$. Let  $m= \max(c, k)$ and
consider the cliques $\clique[m]$ and $\clique[m + 1]$ with $\vertex(\clique[m]) = \sete{v_1, \etc, v_m}$ and $\vertex(\clique[m + 1]) = \sete{v_1, \etc, v_m, v_{m + 1}}$. Clearly,  $\clique[m] \equiv_{\folog}^m \clique[m + 1]$, hence $\clique[m] \equiv_{\folog}^k \clique[m + 1]$ because $k \leq m$. By the assumption about the class $\class{F}$, we have  $\lefthomvec(\class{F}, \clique[m]) = \lefthomvec(\class{F}, \clique[m + 1])$; in particular, we have   $\hom(D, \clique[m]) = \hom(D, \clique[m + 1])$.
Furthermore, $\hom(D, \clique[m]) > 0$, since $m \geq c = \card{\vertex(D)}$ and there is an injective homomorphism from $D$ to $\clique[m]$. Therefore, $\hom(D, \clique[m]) = \hom(D, \clique[m + 1]) > 0$.
This, however, is a contradiction, since $\hom(D, \clique[m + 1]) > \hom(D, \clique[m])$.  Indeed, first note that $\Hom(D, \clique[m]) \subseteq \Hom(D, \clique[m + 1])$. Next, given a homomorphism $h: D \to \clique[m]$, we take the least-indexed vertex in the image $h(D)$, say $v_r$ (note that $ r \leq m$), and substitute $v_{m + 1}$ in $h$ for $v_r$ to obtain a homomorphism $h': D \to \clique[m + 1]$ such that for every $u \in \vertex(D)$, we have that $h'(u) = v_{m + 1}$ if $u \in \inv{h}(v_r)$, and $h'(u) = h(u)$ if $u \in \vertex(D) \setminus \inv{h}(v_r)$.
\ignore{
\begin{displaymath}
h'(u) =
\begin{cases}
v_{m + 1} & \mbox{if \mathmode{u \in \inv{h}(v_r)}}\cr
h(u) & \mbox{if \mathmode{u \in \vertex(D) \setminus \inv{h}(v_r)}}.\cr
\end{cases}
\end{displaymath}}
Obviously, $h' \notin \hom(D, \clique[m])$, which proves the first part.

For the second part, suppose that such a class $\class{F}$ exists. It cannot be the case $\class{F} = \sete{\indep[1]}$. Otherwise, we would have $\righthomvec(\indep[1], \class{F}) = \righthomvec(\indep[2], \class{F})$ which, by assumption, would imply $\indep[1] \equiv_{\folog}^k \indep[2]$; this is absurd because $\indep[1] \satis \forall x \forall y \, x = y$, while $\indep[2] \not \satis \forall x \forall y \, x = y$.
Therefore, $\class{F}$ contains a graph $D$ with at least two vertices.
Let $c =\card{\vertex(D)} \geq 2$ and let $m = \max(c, k)$. Consider two independent sets, $\indep[m]$ and $\indep[m + 1]$. Obviously, $\indep[m] \equiv_{\folog}^m \indep[m + 1]$ and hence $\indep[m] \equiv_{\folog}^k \indep[m + 1]$ since $m \geq k$. The assumption about $\class{F}$ implies that  $\righthomvec(\indep[m], \class{F}) = \righthomvec(\indep[m + 1], \class{F})$. Therefore, $\hom(\indep[m], D) = \hom(\indep[m + 1], D)$, which is a contradiction because $c = \card{\vertex(D)} \geq 2$ and $\hom(\indep[m], D) = c^m < c^{m + 1} = \hom(\indep[m + 1], D)$.
\end{proof}

The \emph{quantifier-depth} of a first-order formula is a positive integer that measures the nesting of quantifiers in that formula; it gives rise to a parametrization of first-order logic that is different from the parametrization according to the number of distinct formulas.
Let $k$ be a positive integer and let $G$ and $H$ be two graphs. We write $G \equiv^k_{{\mathrm QR}} H$ to denote that $G$ and $H$ satisfy the same FO-sentences of quantifier rank at most $k$. It is well known that the equivalence relation $\equiv^k_{{\mathrm QR}}$ is characterized in terms of the $k$-move Ehrenheufcht-Fra\"{i}ss\'e game (see \cite{DBLP:books/sp/Libkin04,DBLP:series/txtcs/GradelKLMSVVW07}).
The next result tells that $\equiv^k_{{\mathrm QR}}$ cannot be captured by restricting the left profile or the right profile. The proof is omitted because it is essentially the same as that of Proposition \ref{prop:FO-k}.

\begin{proposition}\label{no_vectors_characterize_folog_qd_k}
 Consider a positive integer $k\geq 2$.
\begin{enumerate}
\item
There is no class $\class{F}$ of graphs such that for every two graphs $G$ and $H$, we have that
 $G \equiv^{k}_{\mathrm{QR}} H$ if and only if $\lefthomvec(\class{F}, G) = \lefthomvec(\class{F}, H)$.

\item There is no class $\class{F}$ of graphs such that for every two graphs $G$ and $H$, we have that
 $G \equiv^{k}_{\mathrm{QR}} H$ if and only if $\righthomvec(G, \class{F}) = \righthomvec(H, \class{F})$.
\end{enumerate}
\end{proposition}

\section{Extensions and Discussion} \label{sec:extensions}
Up to this point, we have focused on graphs that have no self-loops,
no multi-edges, and no weights on the vertices or the edges;
furthermore, the computation of the homomorphism count uses integer
arithmetic only. In this section, we discuss two extensions of the
framework: the first involves graphs with loops and real numbers as
weights on the vertices and the edges, while the second involves
defining the homomorphism count over arbitrary semirings.

\subsection{Extension to Graphs with Loops and Real  Weights}
There is substantial literature on extended notions of homomorphism
counts that involve \emph{weighted} graphs, that is, undirected graphs
with self-loops and weights on each vertex and each edge (including
self-loops), but no multi-edges. In particular, Lov\'asz defines the
notion of the homomorphism count~$\hom(G,H)$, where~$G$ is an ordinary
graph (no self-loops, no multi-edges, no weights) and~$H$ is a
weighted graph \cite{DBLP:books/daglib/0031021}. Specifically, assume
that~$H=(V(H), E(H), w)$ is a weighted graph, where~$w$ is a
real-valued weight function defined on~$V(H)\cup
E(H)$. If~$G=(V(G),E(G))$ is a graph, then a \emph{homomorphism}
from~$G$ to~$H$ is a homomorphism from~$G$ to the (unweighted) looped
graph~$H'=(V(H),E(H))$ (note that here an edge of~$G$ can be mapped to
a self-loop of~$H'$). Let~$\Hom(G,H')$ denote the set of all
homomorphisms from~$G$ to~$H'$ and define the homomorphism
count~$\hom(G,H)$ as
\begin{equation*}
\hom(G,H) = \displaystyle\sum_{h \in \Hom(G,H')} \displaystyle\prod_{u\in V(G)}w(h(u))\displaystyle\prod_{e \in E(G)}w(h(e))
\end{equation*}
The earlier definition of~$\hom(G,H)$ between two graphs~$G$ and~$H$
is the special case of this in which~$H$ is turned into a weighted
graph with weight~$1$ on each vertex and each edge.

Several important graph invariants can be expressed as graph
polynomials. In Section \ref{sec:left}, we encountered such a graph
polynomial, namely, the chromatic polynomial~$\chrom(G,k)$, which
gives the number of the~$k$-colorings of~$G$. It is known that several
other fundamental graph polynomials can be expressed using the
preceding extended notion of homomorphism counts (for an overview, see
\cite[Section 5.3]{DBLP:books/daglib/0031021}). Here, we discuss two
such polynomials.

The \emph{cluster expansion polynomial}~$\textsc{cep}(G;x,y)$ of a
graph~$G$ is a bivariate polynomial that, among other things,
generalizes the chromatic polynomial. If~$G$ is a graph, then, by
definition,
$$
\textsc{cep}(G;x,y) = \displaystyle\sum_{A\subseteq
  E(G)}x^{c(A)}y^{|A|},
$$
where~$c(A)$ is the number of connected components of the
graph~$(V(G),A)$ and~$|A|$ is the cardinality of the set~$A$.  It can
be shown that~$\textsc{cep}(G;k,-1)=\chrom(G,k)$. More importantly,
the cluster expansion polynomial is a version of the Tutte polynomial
\cite{tutte2004graph}, which is arguably the most fundamental graph
polynomial as it encapsulates a great deal of information about the
graph with which it is associated.

The cluster expansion polynomial can be expressed in terms of the
homomorphism count between graphs and certain weighted graphs (see
\cite[Section 5.3]{DBLP:books/daglib/0031021})). Specifically, for
every~$k\geq 1$ and every real~$y$, let~$K_{k,y}$ be the clique on~$k$
vertices with a self-loop added at every vertex and with the following
weights: every vertex has weight~$1$, every self-loop has
weight~$1+y$, and every other edge has weight~$1$. Then it can be
proved that~$\textsc{cep}(G;k,y)= \hom(G,K_{k,y})$. It follows that
if~$\class{F}$ is the class of all weighted graphs of the
form~$K_{k,y}$, then the right profile restricted
to~$\class{F}$ captures the equivalence relation ``the graphs~$G$
and~$H$ have the same cluster expansion polynomial".

The \emph{independence polynomial}~$I(G;x,y)$ of a graph~$G$ is a
bivariate polynomial that encapsulates information about the
independent sets of~$G$. If~$G$ is a graph, then let~$\mathscr{I}(G)$
denote the collection of its independent sets, and define
$$I(G; x,y) = \displaystyle\sum_{U \in \mathscr{I}(G)} x^{|U|} y^{|V(G)\setminus U|}.$$
Note that
$$I(G;x,1) = \displaystyle \sum _{U \in \mathscr{I}(G)}x^{|U|},$$
which is the univariate independence polynomial introduced by Gutman
and Harary \cite{gutman1983generalizations}.  Like the cluster
expansion polynomial, the independence polynomial can be expressed in
terms of the homomorphism count between graphs and certain weighted
graphs (see \cite[Section
4.2]{DBLP:journals/ejc/GarijoGN11}). Specifically,
let~$L =(V(L),E(L))$ be the ``lollipop" graph, i.e., the graph with
two vertices~$a$,~$b$, an edge~$(a,b)$ and a self-loop~$(b,b)$. For
every two real numbers~$x$ and~$y$, let~$L_{x,y}$ be the weighted
graph obtained from~$L$ by putting~$x$ as the weight of the
vertex~$a$, putting~$y$ as the weight of the vertex~$b$, and
putting~$1$ as the weight of the edge~$(a,b)$ and the
self-loop~$(b,b)$. Then it can be proved
that~$I(G;x,y) = \hom(G,L_{x,y})$.  It follows that if~$\class{F}$ is
the class of all weighted graphs of the form~$L_{x,y}$, then the
right profile restricted to~$\class{F}$ captures the
equivalence relation ``the graphs~$G$ and~$H$ have the same
independence polynomial".

The preceding discussion suggests that the investigation we embarked
on here should be expanded to an investigation of the expressive power
of profiles restricted to classes of weighted graphs, as
such vectors give rise to a variety of equivalence relations arising
from graph polynomials.

\subsection{Extension to Arbitrary Semirings}

Let~$G$ be a graph and let~$H$ be a weighted graph. The expression
defining the homomorphism count~$\hom(G,H)$ is a sum of products of
real numbers. This sum of products is also meaningful over an
arbitrary semiring~$\mathbb{K}= (K, +, \times, 0, 1)$, where~$+$
and~$\times$ are the addition and multiplication operations on~$K$,
and~$0$ and~$1$ are the identity elements of~$+$ and~$\times$. Thus,
we can define the homomorphism count~$\hom_{\mathbb K}(G,H)$ for a
graph~$G$ and a~$\mathbb K$-weighted graph~$H$, where the weight
function takes values in the universe~$K$ of an arbitrary, but fixed,
semiring~$\mathbb{K}$. For example, the standard homomorphism
count~$\hom(G,H)$, where~$G$ and~$H$ are graphs, coincides with the
homomorphism count~$\hom_{\mathbb N}(G,H)$,
where~$\mathbb{N}= (\{0,1,2,\ldots\},+, \times, 0, 1)$ is the
\emph{bag} semiring of the non-negative integers, and~$H$ is viewed as
a~${\mathbb N}$-weighted graph with weight~$1$ on each vertex and each
edge.  And, of course, the discussion in the preceding section is
about the homomorphism count~$\hom_{\mathbb R}(G,H)$,
where~$\mathbb{R}=(R,+,\times,0,1)$ is the semiring (actually, the
field) of the real numbers.

These considerations pave the way for a further expansion of the
framework to homomorphism counts with respect to some arbitrary, but
fixed, semiring.  As a concrete case in point, consider the
\emph{Boolean} semiring~$\mathbb{B} = (\{0,1\},\vee,\wedge,0,1)$,
which has disjunction~$\vee$ and conjunction~$\wedge$ as operations,
and~$0$ (false) and~$1$ (true) as the identity elements of~$\vee$
and~$\wedge$. Let~$G$ be a graph and let~$H$ be a~$\mathbb B$-graph
with weight~$1$ on each vertex and each edge.
Then~$\hom_{\mathbb B}(G,H)$ is the \emph{sign} of the standard
homomorphism count~$\hom(G,H)$ indicating the existence or
non-existence of a homomorphism from~$G$ to~$H$, that
is,~$\hom_{\mathbb B}(G,H)= 1$ if there is a homomorphism from~$G$
to~$H$, and~$\hom_{\mathbb B}(G,H) =0$, otherwise.

Let $G$ be a graph and let $\class{F}$ be a class of graphs. Put $\hom_{\mathbb B}(\class{F}, G)= \seq{\hom_{\mathbb B}(D, G)}{D \in \class{F}}$ and $\hom_{\mathbb B}(G,\class{F})= \seq{\hom_{\mathbb B}(G, D)}{D \in \class{F}}$ for the left and the right profiles of $G$ restricted to $\class{F}$.

Recall that $\classofgraphs$ is the class of all graphs. Obviously, for every
graph, there is a homomorphism from that graph to itself. Therefore,
for every two graphs~$G$ and~$H$, we have that the following
statements are equivalent:
\begin{enumerate}
\item $\hom_{\mathbb B}(\class{G}, G)= \hom_{\mathbb B}(\class{G}, H)$.
\item $\hom_{\mathbb B}(G,\class{G})= \hom_{\mathbb B}(H,\class{G})$.
\item $G$ and $H$ are homomorphically equivalent.
\end{enumerate}
By definition,~$G$ and~$H$ are \emph{homomorphically equivalent} if
there are homomorphisms from~$G$ to~$H$, and from~$H$ to~$G$.  The
notion of homomorphic equivalence plays an important role in several
different areas, including database theory and constraint
satisfaction. Furthermore, homomorphic equivalence coincides with
isomorphism on graphs that are cores (see
\cite{DBLP:books/daglib/0013017} for detailed information about these
notions).

The \emph{chromatic number}~$\chrom(G)$ of a graph~$G$ is the smallest
positive integer~$k$ such that~$G$ has a~$k$-coloring. Clearly,~$G$
has a~$k$-coloring if and only if~$\hom(G, \clique[k]) > 0$;
furthermore, if~$\hom(G, \clique[k]) > 0$,
then~$\hom(G, \clique[m]) > 0$, for all~$m > k$.  Thus,
two graphs~$G$ and~$H$ have the same chromatic number if and only
if~$\hom_{\mathbb B}(G,\classofcliques)= \hom_{\mathbb
  B}(H,\classofcliques)$.  In contrast, the next result asserts that
the equivalence relation of two graphs having the same chromatic
number cannot be captured by restricting the left profile~$\hom_{\mathbb B}(\class{G}, G)$.

\begin{proposition} \label{prop:chromatic} There is no
  class~$\class{F}$ of graphs such that for every two graphs~$G$
  and~$H$, we have that~$\chrom(G) = \chrom(H)$ if and only
  if~$\hom_{\mathbb B}(\class{F}, G) = \hom_{\mathbb B}(\class{F},
  H)$.
\end{proposition}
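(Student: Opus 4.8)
The plan is to assume, towards a contradiction, that such a class $\mathscr{F}$ exists, and to extract from it a severe structural restriction: \emph{every graph in $\mathscr{F}$ must be bipartite}. Unwinding the definitions, $\hom_{\mathbb B}(\mathscr{F},G)=\hom_{\mathbb B}(\mathscr{F},H)$ says precisely that for every $D\in\mathscr{F}$ there is a homomorphism $D\to G$ if and only if there is one $D\to H$; so the standing hypothesis is that this "same homomorphism sources from $\mathscr{F}$" relation coincides with having equal chromatic number. Once we know $\mathscr{F}$ consists only of bipartite graphs, the contradiction is immediate: every bipartite graph maps homomorphically into any graph containing at least one edge (and also into any graph with a vertex, if it is edgeless), so $\hom_{\mathbb B}(D,K_3)=1=\hom_{\mathbb B}(D,K_4)$ for all $D\in\mathscr{F}$, whence $\hom_{\mathbb B}(\mathscr{F},K_3)=\hom_{\mathbb B}(\mathscr{F},K_4)$ although $\chi(K_3)=3\neq 4=\chi(K_4)$, contradicting the backward direction of the assumed characterization.

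The heart of the argument is therefore the claim that no $D\in\mathscr{F}$ has an odd cycle. Suppose $D\in\mathscr{F}$ is not bipartite; let $\ell$ be its odd girth (the length of a shortest odd cycle in $D$) and set $k:=\chi(D)$, noting $k\geq 3$. The tool I would invoke is the classical theorem of Erd\H{o}s that there exist graphs of arbitrarily large girth and arbitrarily large chromatic number; combined with the elementary fact that deleting one vertex lowers the chromatic number by at most one, it yields a graph $H$ with $\chi(H)=k$ \emph{exactly} and odd girth strictly greater than $\ell$. Since $\chi(K_k)=k=\chi(H)$, the forward direction of the assumed characterization forces: for every $D'\in\mathscr{F}$, $D'\to K_k$ if and only if $D'\to H$. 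Applying this to $D'=D$, and using $D\to K_{\chi(D)}=K_k$ via an optimal colouring, we get $D\to H$; composing with the inclusion $C_\ell\hookrightarrow D$ gives $C_\ell\to H$. But the images of the vertices of $C_\ell$ form a closed walk of odd length $\ell$ in $H$, and a closed walk of odd length contains an odd cycle of length at most $\ell$, contradicting the choice of $H$. Hence $D$ is bipartite, establishing the claim and, with the first paragraph, the proposition.

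I expect the only genuine subtlety — and the step I would check most carefully — to be the passage from the inequalities supplied by Erd\H{o}s's theorem (girth $\geq g$, chromatic number $\geq k$) to a graph of chromatic number \emph{precisely} $k$ and odd girth $>\ell$; the vertex-deletion argument handles this, since deleting vertices never decreases girth (hence never decreases odd girth) and never drops the chromatic number by more than one, so one can decrease the chromatic number down to exactly $k$ while keeping all cycles long. Everything else is the by-now-standard observation that short odd cycles are the obstruction to mapping into a graph of large odd girth, together with the trivial fact that bipartite graphs homomorphically "see" only whether the target has an edge.
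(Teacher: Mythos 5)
Your proof is correct, but it takes a genuinely different route from the paper's. You first extract a structural lemma — every member of $\mathscr{F}$ must be bipartite — by showing that a non-bipartite $D\in\mathscr{F}$ of odd girth $\ell$ and chromatic number $k$ would be forced to map into an Erd\H{o}s-type graph of chromatic number exactly $k$ and odd girth exceeding $\ell$, which is impossible; you then finish with $K_3$ versus $K_4$. The vertex-deletion refinement you flag as the delicate step is indeed sound: deleting a vertex lowers $\chi$ by at most one and never shortens any surviving cycle, so one can tune $\chi$ down to exactly $k$ while preserving large girth. The paper's proof is more elementary and never pins down the structure of $\mathscr{F}$: it directly exhibits a pair of graphs of different chromatic number that no single $D$ can separate, namely a long even cycle $C_{2k}$ and a long odd cycle $C_{2k+1}$. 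If $D$ is $2$-colorable it maps into both; if not, the hypothesis forces $\hom_{\mathbb B}(D,C_{2k+1})=\hom_{\mathbb B}(D,C_{2n+1})$ for all odd cycles (all of chromatic number $3$), and the latter vanishes once $2n+1$ exceeds the odd girth of $D$, with the analogous argument for even cycles. In effect you trade the chromatic constancy of the two families of cycles for an appeal to Erd\H{o}s's high-girth, high-chromatic-number construction — a tool the paper does invoke, but only for the companion result on clique number (Proposition~\ref{prop:clique}), where cycles no longer suffice. Your approach buys a reusable structural fact about any putative class $\mathscr{F}$; the paper's buys a self-contained argument from first principles.
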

\begin{proof}
Towards a contradiction, assume such a class~$\class{F}$ exists.

Consider the cycles $\cycle[2k]$ of even length, $k \geq 2$. They
  have chromatic number~$2$,
  hence $\hom_{\mathbb B}(\class{F}, \cycle[2k]) = \hom_{\mathbb
    B}(\class{F}, \cycle[2m])$, for all~$k, m\geq 2$.  Next, consider
  the cycles~$\cycle[2k + 1]$ of odd length,~$k \geq 1$. They have
  chromatic number~$3$,
  hence~$\hom_{\mathbb B}(\class{F}, \cycle[2k + 1]) = \hom_{\mathbb
    B}(\class{F}, \cycle[2m + 1])$, for all~$k, m \geq 1$.  We will
  show
  that~$\hom_{\mathbb B}(\class{F}, \cycle[2k]) = \hom_{\mathbb
    B}(\class{F}, \cycle[2k + 1])$, for~$k \geq 2$, which will be a
  contradiction, since~$\cycle[2k]$ and~$\cycle[2k + 1]$ have
  different chromatic numbers.

  Fix a~$k \geq 2$ and let~$D$ be an arbitrary graph in~$\class{F}$.

  \emph{Case 1.}~$D$ is a~$2$-colorable graph. In this case, we have
  that~$\hom_{\mathbb B}(D, \cycle[2k]) = 1 = \hom_{\mathbb B}(D,
  \cycle[2k + 1])$.

  \emph{Case 2.}~$D$ is a non-$2$-colorable graph. It follows that~$D$
  contains an odd cycle~$\cycle[2l + 1]$, for some~$l \geq 1$. Pick a
  cycle~$\cycle[2n]$ with~$2n > 2l + 1$.
  Then~$\hom_{\mathbb B}(D, \cycle[2n]) = 0$, since homomorphisms map
  odd cycles to odd cycles of smaller or equal length,
  but~$\cycle[2n]$ contains no such cycle.
  Therefore,~$\hom_{\mathbb B}(D, \cycle[2k]) = 0$ as well.  Next,
  pick a cycle~$\cycle[2n + 1]$ with~$n > l$. By the same
  reasoning,~$\hom_{\mathbb B}(D, \cycle[2n + 1]) = 0$, and
  so~$\hom_{\mathbb B}(D, \cycle[2k + 1]) = 0$ (recall that $\chrom(\cycle[2n + 1]) = \chrom(\cycle[2k + 1]) = 3$).

  From the preceding case analysis, we  conclude that
  for every~$k \geq 2$ and  every~$D \in \class{F}$, we have
  that~$\hom_{\mathbb B}(D, \cycle[2k]) = \hom_{\mathbb B}(D,
  \cycle[2k + 1])$.
  Thus,~$\hom_{\mathbb B}(\class{F}, \cycle[2k]) = \hom_{\mathbb
    B}(\class{F}, \cycle[2k + 1])$, a contradiction.
\end{proof}

The \emph{clique number}~$\omega(G)$ of a graph~$G$ is the largest
positive integer~$k$ such that~$G$ has a~$k$-clique as a
subgraph. Clearly, two graphs~$G$ and~$H$ have the same clique
number if and only
if~$\hom_{\mathbb B}(\classofcliques,G) = \hom_{\mathbb
  B}(\classofcliques,H)$.
In contrast,  we show that the clique number cannot be
captured by any restriction of the right profile over the
Boolean semiring. The proof uses the classical result of Erd\"os that
there exist graphs of arbitrarily large chromatic number and,
simultaneously, arbitrarily large girth; see, e.g., Corollary 3.13
in~\cite{DBLP:books/daglib/0013017}.

\begin{proposition} \label{prop:clique} There is no class~$\class{F}$
  of graphs such that for every two graphs~$G$ and~$H$, we have
  that~$\omega(G) = \omega(H)$ if and only
  if~$\hom_{\mathbb B}(G,\class{F}) = \hom_{\mathbb B}(H,\class{F})$.
\end{proposition}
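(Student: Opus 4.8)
The plan is to assume, towards a contradiction, that such a class~$\class{F}$ exists, so that for all graphs~$G$ and~$H$ we have~$\omega(G)=\omega(H)$ if and only if~$\hom_{\mathbb B}(G,\class{F})=\hom_{\mathbb B}(H,\class{F})$. The first step I would take is to record the elementary observation that, for any (simple, loopless) graph~$F$, we have~$\hom_{\mathbb B}(\clique[2],F)=1$ precisely when~$F$ has at least one edge. Now notice that every triangle-free graph~$R$ that has an edge satisfies~$\omega(R)=2=\omega(\clique[2])$, so the assumed equivalence forces~$\hom_{\mathbb B}(R,\class{F})=\hom_{\mathbb B}(\clique[2],\class{F})$. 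Reading this coordinatewise, for \emph{every}~$F\in\class{F}$ and every triangle-free~$R$ with an edge, a homomorphism~$R\to F$ exists exactly when~$F$ has an edge; in particular, any member of~$\class{F}$ that has an edge receives a homomorphism from \emph{every} triangle-free graph.

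The second step is to bring in the Erd\"os theorem, using only its triangle-free (girth~$>3$) instance: for every~$c$ there is a triangle-free graph~$R_c$ with~$\chrom(R_c)\ge c$. If some~$F\in\class{F}$ had an edge, then~$R_c\to F$ for every~$c$, and composing such a homomorphism with a proper~$\chrom(F)$-colouring of~$F$ would give~$\chrom(R_c)\le\chrom(F)$; hence~$\chrom(F)\ge c$ for all~$c$, which is impossible since~$\chrom(F)$ is finite. Thus no member of~$\class{F}$ has an edge, that is,~$\class{F}\subseteq\classofindeps$.

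The third and final step closes the argument: since every member of~$\class{F}$ is an independent set, no graph with an edge admits a homomorphism into any member of~$\class{F}$, so~$\hom_{\mathbb B}(\clique[2],\class{F})$ and~$\hom_{\mathbb B}(\clique[3],\class{F})$ are both the all-zero vector and hence equal, even though~$\omega(\clique[2])=2\ne 3=\omega(\clique[3])$, contradicting the assumed equivalence. I do not expect a genuine obstacle here; rather, the crux of the proof is the insight in the first step, namely that because clique-number equivalence is coarse enough to identify~$\clique[2]$ with triangle-free graphs of unbounded chromatic number, it forces~$\class{F}$ to be edgeless. The only points that need to be handled with a little care are keeping track of which direction of the ``if and only if'' is used at each step and disposing of degenerate cases (the empty graph, and the fact that under our conventions the graphs in~$\class{F}$ are loopless, so that ``$\chrom(F)$ is finite'' is meaningful).
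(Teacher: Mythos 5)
Your proof is correct and follows essentially the same route as the paper's: both hinge on Erd\H{o}s's triangle-free graphs of unbounded chromatic number all having clique number~$2$, which forces any member of~$\class{F}$ containing an edge into a contradiction, and both dispose of the remaining edgeless case via~$K_2$ versus~$K_3$. The only difference is cosmetic --- where you compare the Erd\H{o}s graph directly with~$K_2$, the paper compares it with the product~$D\times K_2$ for a chosen~$D\in\class{F}$; your choice slightly streamlines the argument by making the product construction unnecessary.
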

\begin{proof}
  Towards a contradiction, assume such a class~$\class{F}$
  exists.  Then there must be some graph~$D \in \class{F}$
  that is not an independent set, because $\omega(K_2) \not= \omega(K_3)$
  and $\hom_{\mathbb B}(K_2,I) = \hom_{\mathbb B}(K_3,I) = 0$
  for every independent set~$I \in \classofindeps$. Pick such a graph~$D$
  and let~$m = |V(D)|$. Let~$G$ be any graph of
  chromatic number larger than~$m$ and girth larger than~$3$. In
  particular~$\hom_{\mathbb B}(G,D) = 0$ and~$\omega(G) = 2$. Now
  take~$F = D \times K_2$, i.e.,~$V(F) = V(D) \times \{0,1\}$ and
  there is an edge between~$(u,a)$ and~$(v,b)$ in~$F$ (where $u,v \in V(D)$ and $a,b \in \{0,1\}$) if and only
  if~$(u,v)$ is an edge in~$F$ and~$a \not= b$. The projections
  into each component are homomorphisms into~$D$ and~$K_2$,
  respectively.  Hence~$\hom_{\mathbb B}(F,D) \not= 0$ and~$F$ is
  2-colorable. In particular~$F$ is triangle-free and, since~$D$ is
  not an independent set,~$\omega(F) = 2$. We have shown
  that~$\omega(F) = \omega(G) = 2$
  but~$\hom_{\mathbb B}(F,D) \not= \hom_{\mathbb B}(G,D)$.
\end{proof}

It is obvious that~$\omega(G) \leq \chi(G)$. The inequality
is strict for the odd cycles (since  $\omega(C_{2n+1}) = 2$ and $\chi(C_{2n+1}) = 3$) and for many other prominent graphs, such as the Petersen graph.
 There is a vast literature on graph parameters that
interpolate between the clique number and the chromatic number. A
well-known such parameter is the \emph{fractional chromatic
  number} of a graph~$G$, denoted by~$\chi_f(G)$, which arises in the study of fractional colorings  \cite{geller1976r,stahl1976n}. By definition, the quantity $\chi_f(G)$
is the optimum of a linear program that  has one real
variable~$x_U$ for each independent set~$U \in \mathscr{I}(G)$ of~$G$
and is defined as follows:
$$
\begin{array}{llll}
\text{min}  & \sum_{U \in \mathscr{I}(G)} x_U \\
\text{s.t.} & \sum_{U \in \mathscr{I}(G): v \in U} x_U \geq 1 & \text{ for all } v \in V(G), \\
& x_U \geq 0 & \text{ for all } U \in \mathscr{I}(G).
\end{array}
$$
It is known that~$\omega(G) \leq \chi_f(G) \leq \chi(G)$ and, again,
the inequalities can be strict. In particular, the inequalities are strict for odd cycles of length at least $5$, since $\chi_f(C_{2n+1}) = 2 + \frac{1}{n}$.

The fractional chromatic number of a graph can also be characterized
combinatorially. We write~$K_{a:b}$ to denote the \emph{Kneser} graph with
parameters~$a$ and~$b$, where~$a \geq 2b$. This is the graph whose
vertices are the~$b$-element subsets of~$\{1,\ldots,a\}$, and where
two~$b$-element subsets are joined by an edge if the sets are
disjoint. It is known that~$\chi_f(G)$ is the smallest rational number~$a/b$ for which there is a homomorphism from~$G$ to the
Kneser graph~$K_{a:b}$.  It follows that two graphs~$G$ and~$H$
satisfy~$\chi_f(G) = \chi_f(H)$ if and only
if~$\hom_{\mathbb B}(G,\mathscr{K}_f) = \hom_{\mathbb
  B}(H,\mathscr{K}_f)$, where~$\mathscr{K}_f$ denotes the class of all
Kneser graphs. We refer the reader to Chapter~6
of~\cite{DBLP:books/daglib/0013017} for the fascinating interplay
between graph coloring theory and the Kneser graphs.

Interestingly, the fractional chromatic number of a
graph~$G$ has a dual, called the \emph{fractional clique number}
of~$G$, denoted by~$\omega_f(G)$. By definition, this is the optimum
of the \emph{dual} of the linear program that
defines~$\chi_f(G)$. Concretely,~$\omega_f(G)$ is the optimum of
$$
\begin{array}{llll}
\text{max} & \sum_{v \in V(G)} y_v \\
\text{s.t.} & \sum_{v \in U} y_v \leq 1 & \text{ for all } U \in \mathscr{I}(G), \\
& y_v \geq 0 & \text{ for all } v \in V(G).
\end{array}
$$
By the Duality Theorem of Linear Programming, it holds
that~$\omega_f(G) = \chi_f(G)$. In view of this, and of the
characterization of~$\chi_f$ in terms of the sign of the
right profile~$\hom_{\mathbb B}(\placeholder,\mathscr{K}_f)$ restricted to Kneser graphs, it would be
interesting to know whether the equivalence relation of having the
same fractional clique number can be captured as the restriction of a
left profile. We leave this as an open problem.

\section{Concluding Remarks}
In this paper, we investigated relaxations of graph isomorphism
obtained by restricting the left profile and the
right profile to a class of graphs. We showed that these
two types of restrictions have incomparable expressive power.  In
particular, we established a number of results to the effect that
certain natural relaxations of graph isomorphism \emph{cannot} be
obtained by restricting one of these two profiles.

The work reported here motivates several different directions for
future research, including investigating extensions of this framework
to weighted graphs over semirings, as discussed in Section
\ref{sec:extensions}.  One of the ultimate goals is to characterize
the relaxations of graph isomorphism that are captured by restrictions
of the left profile or by restrictions of the
right profile.  An important result in this vein is the
Freedman-Lov\'asz-Schrijver Theorem \cite{freedman2007reflection},
which characterizes the \emph{graph parameters} (invariant numbers) of
a graph~$G$ that are equal to~$\hom(G,F)$ for some fixed weighted
graph~$F$. Much more remains to be done to obtain such
characterizations for restrictions of the left profile or
the right profile to an arbitrary class~$\class{F}$ of
graphs (not just to a single graph~$F$).

\bibliographystyle{alpha}
\bibliography{biblio}

\end{document}